\documentclass[11pt,reqno]{amsart}

\usepackage{amsmath,amssymb,amsthm,mathtools}
\usepackage{enumitem}
\usepackage{microtype}
\usepackage{xcolor}
\usepackage[colorlinks=true,linkcolor=blue!55!black,citecolor=blue!55!black,urlcolor=blue!55!black]{hyperref}
\usepackage[nameinlink,capitalise,noabbrev]{cleveref}

\numberwithin{equation}{section}

\newtheorem{theorem}{Theorem}[section]
\newtheorem{proposition}[theorem]{Proposition}
\newtheorem{lemma}[theorem]{Lemma}
\newtheorem{corollary}[theorem]{Corollary}

\theoremstyle{definition}
\newtheorem{definition}[theorem]{Definition}

\theoremstyle{remark}
\newtheorem{remark}[theorem]{Remark}

\newcommand{\HH}{\mathbb H}
\newcommand{\CC}{\mathbb C}
\newcommand{\RR}{\mathbb R}
\newcommand{\SSph}{\mathbb S}
\newcommand{\cA}{\mathcal A}

\newcommand{\cC}{\mathcal C}

\newcommand{\cL}{\mathcal L}
\newcommand{\cR}{\mathcal R}
\newcommand{\cT}{\mathcal T}
\newcommand{\cH}{\mathcal H}
\newcommand{\cM}{\mathcal M}
\newcommand{\fA}{\mathfrak A}
\newcommand{\GammaH}{\Gamma_{\!H}}
\newcommand{\dV}{\,dV_S}
\newcommand{\dx}{\,d\xi}
\newcommand{\dy}{\,d\eta}
\newcommand{\loc}{\mathrm{loc}}
\newcommand{\supp}{\operatorname{supp}}

\newcommand{\Span}{\operatorname{span}}
\newcommand{\Id}{\mathrm{Id}}

\newcommand{\ii}{\mathrm i}

\newcommand{\weakto}{\rightharpoonup}

\DeclareMathOperator{\Rea}{Re}
\DeclareMathOperator{\Ima}{Im}

\setlist[itemize]{leftmargin=2em,itemsep=0.25em,topsep=0.35em}
\setlist[enumerate]{leftmargin=2.2em,itemsep=0.25em,topsep=0.35em}

\allowdisplaybreaks

\title[Classification for the CR Yamabe equation on $\HH^n$]{Classification of positive entire solutions of the CR Yamabe equation on the Heisenberg group}
\author{Jungang Li}
\address{University of Science and Technology of China}
\email{jungangli@ustc.edu.cn}
\thanks{The author was supported by the National Natural Science Foundation of China (NSFC), grant no.~12571127.}
\date{August 7, 2026}

\subjclass[2020]{35J61, 32V20, 35B53, 35B33}
\keywords{CR Yamabe equation, Heisenberg group, Liouville theorem, Jerison--Lee identity, Green representation, concentration compactness, defect quantization, Morrey estimates}
\hypersetup{
  pdftitle={Classification of positive entire solutions of the CR Yamabe equation on the Heisenberg group},
  pdfauthor={Jungang Li},
  pdfsubject={Unconditional classification for the critical CR Yamabe equation},
  pdfkeywords={CR Yamabe equation, Heisenberg group, Liouville theorem, Jerison--Lee identity, Green representation, concentration compactness, defect quantization, Morrey estimates}
}

\begin{document}

\begin{abstract}
We prove that, for every $n\ge2$, every positive entire solution of the
critical CR Yamabe equation
\[
 4\Delta_bu=n^2u^{(Q+2)/(Q-2)},\qquad Q=2n+2,
\]
on the Heisenberg group $\HH^n$ is a Jerison--Lee bubble.  No
integrability, decay, boundedness, or symmetry is assumed.  Together
with the theorem of Catino, Li, Monticelli, and Roncoroni in $\HH^1$,
this classifies the positive entire solutions in every dimension.

Both Euclidean routes to such a statement lose their starting
configuration here.  Hyperplane reflections are not CR automorphisms,
and a CR inversion preserves its Kor\'anyi sphere only setwise, so the
difference between a solution and its Kelvin transform need not vanish
on the sphere one inverts in.  Nor is there a substitute a priori bound
to fall back on: the only scale-invariant estimate available for every
positive solution is a critical Morrey bound, which concentration
saturates and which yields neither decay nor a small-mass regularity
principle.

We proceed instead from two exact consequences of the Green
representation, which every positive solution is shown to satisfy.
Reciprocity with a bubble $U$ converts the distance from $U$ into a
nonnegative convex deficit, so that no information about the sign of a
linearized quadratic form is required; differentiating the same
reciprocity along the conformal orbit of $U$ gives a nonlinear
barycentre identity, which forbids that deficit from concentrating at a
single point of the CR sphere.  Together these isolate the bubble
manifold in a class carrying no energy bound.  Quantizing the
Jerison--Lee tensor defect against this isolation, and alternating the
resulting budget with the Morrey bound, drives the energy-growth
exponent into the range where the defect must vanish.  Both identities
use only conformal covariance and an exact positive Green
representation.
\end{abstract}

\maketitle

\section{Introduction}

Let $\HH^n=\CC^n\times\RR$ be the Heisenberg group and let
$Q=2n+2$ be its homogeneous dimension.  In the standard
pseudohermitian normalization, the critical CR Yamabe equation is
\begin{equation}\label{eq:intro-equation}
 4\Delta_bu=n^2u^p,
 \qquad
 p=\frac{Q+2}{Q-2}=1+\frac2n,
 \qquad u>0\quad\hbox{in }\HH^n.
\end{equation}
The equation is invariant under left translations, Heisenberg
dilations, and CR inversion.  Its finite-energy solutions are the
extremals of the sharp Folland--Stein Sobolev inequality.  Jerison and
Lee proved that every positive solution of \eqref{eq:intro-equation}
in $L^{p+1}(\HH^n)$ belongs to an explicit conformal family
\cite{JerisonLee1988}, in the course of their work on the CR Yamabe
problem \cite{JerisonLee1987}; the sharp inequality and its conformal
formulation were subsequently revisited by Frank and Lieb
\cite{FrankLieb2012}.

The Euclidean analogue was settled without an assumption at infinity
by Caffarelli, Gidas, and Spruck and, by a different integral
moving-plane argument, by Chen and Li, building on the classical
maximum-principle symmetry framework
\cite{GidasNiNirenberg1979,CaffarelliGidasSpruck1989,ChenLi1991}.
For $n\ge2$, the corresponding unconditional statement on $\HH^n$
has remained outside the finite-energy theory.  We prove it; see
\cref{thm:main}.

\subsection{Geometric obstruction and the role of finite energy}

The Euclidean comparison methods do not transfer verbatim.  General
hyperplane reflections are not CR automorphisms of $\HH^n$; the
adapted reflections that remain available underlie important symmetry
results, but do not reproduce the full Euclidean moving-plane sweep
without additional structure
\cite{BirindelliPrajapat1999,GarofaloVassilev2001}.  The method of
moving spheres of Li and Zhu \cite{LiZhu1995}, developed further in
\cite{LiZhang2003,Li2004}, replaces reflections by Kelvin transforms
and is in many Euclidean situations the more flexible of the two.  On
$\HH^n$, however, a CR Kelvin inversion preserves its Kor\'anyi sphere
setwise but need not fix that sphere pointwise, so the usual zero
boundary datum for the difference of a function and its Kelvin
transform is absent.  This does not rule out a moving-spheres
argument, but it means that its initial comparison must use an
additional idea.  In the integral formulation of these methods
\cite{Li2004,ChenLiOu2006}, symmetry results on $\HH^n$ were obtained
by Prajapat and Varghese \cite{PrajapatVarghese2025}; the integral
representation will play a different role below, as the source of the
identities of \cref{sec:riesz-morrey}.

Nor is finite energy merely a convenient normalization.  On the
punctured group, Afeltra constructed singular and dilation-periodic
solutions displaying genuine scale recurrence
\cite{Afeltra2020,Afeltra2026}.  These examples are not entire, but
they show that the scale-rich behavior excluded by the variational
hypothesis is real.  At the same time, the natural estimate enjoyed by
every entire solution is critical: we prove
\[
 \int_{B_R(a)}u^p\le CR^n
\]
at every centre and scale, yet a concentrating bubble has unbounded
height while respecting exactly this bound.  Thus critical Morrey
control alone gives neither a pointwise estimate nor a small-mass
regularity principle.

In $\HH^1$, Catino, Li, Monticelli, and Roncoroni obtained the
unconditional classification
\cite{CatinoLiMonticelliRoncoroni2025}.  The same paper classifies the
solutions for every $n\ge2$ under the pointwise decay hypothesis
\[
 u(\xi)\le C\bigl(1+\rho(\xi)\bigr)^{-n}.
\]
Independently, Flynn and V\'etois obtained the classification for every
$n\ge2$ under the weaker pointwise hypothesis
$u(\xi)\le C\rho(\xi)^{-(n-2)}$, and also under the integral condition
\[
 \int_{B_R}u^q\le CR^2
 \qquad\hbox{for some }
 q\in\Bigl(\tfrac{2n+1}{n},\tfrac{2n+2}{n}\Bigr];
\]
in particular every bounded solution is classified when $n=2$
\cite{FlynnVetois2023}.  For the endpoint case of their pointwise
hypothesis they use an argument from
\cite{CatinoLiMonticelliRoncoroni2025}; see
\cite[Remark~1.2]{FlynnVetois2023} for a detailed comparison of the two
sets of hypotheses.  \Cref{thm:main} removes all of them.

These works and the present paper use the Jerison--Lee divergence
identity as an algebraic engine, and the argument below is indebted to
both; what differs is how the initial global control is obtained.  A
different route to the same unconditional statement, by a method of
moving spheres, has been proposed by Liu \cite{Liu2025movingspheres};
the argument given here is independent of it.  After the present paper
was circulated, X.-N. Ma informed the author that he, J.-H. Li, and
Y.-L. Liu had independently obtained the same unconditional
classification, by a different argument based on a dilation-charge
interaction exclusion and a decomposition into natural-scale packets
\cite{LiLiuMa2026}.

The use of a nonnegative divergence defect has parallels in the
critical $p$-Laplace equation
\cite{CatinoMonticelliRoncoroni2023,Ou2025}; related integral estimates
for semilinear equations on $\HH^n$ were developed by Ma and Ou
\cite{MaOu2023}, and the structural origin of the Jerison--Lee family of
differential identities is analysed by Ma, Ou, and Wu \cite{MaOuWu2024}.
The rigidity endpoint is the CR counterpart of the classical conformal
rigidity principle of Obata \cite{Obata1971}.

Fix the basic bubble
\begin{equation}\label{eq:basic-bubble-intro}
 U_0(z,t)
 =2^n\bigl((1+|z|^2)^2+t^2\bigr)^{-n/2}.
\end{equation}
For $a\in\HH^n$ and $\lambda>0$, set
\begin{equation}\label{eq:bubble-family-intro}
 U_{a,\lambda}(\xi)
 =\lambda^{-n}U_0\!\left(
       \delta_{\lambda^{-1}}(a^{-1}\xi)\right).
\end{equation}
This is the full Jerison--Lee family in the normalization
\eqref{eq:intro-equation}.

\subsection{Main results}

\begin{theorem}[Unconditional classification]\label{thm:main}
Let $n\ge2$ and let $u\in C^2(\HH^n)$ be a positive solution of
\eqref{eq:intro-equation}.  Then there exist $a\in\HH^n$ and
$\lambda>0$ such that
\[
 u=U_{a,\lambda}.
\]
In particular, $u\in L^{p+1}(\HH^n)$, although no integrability,
decay, boundedness, or symmetry is assumed a priori.
\end{theorem}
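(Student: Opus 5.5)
We follow the route announced in the abstract. The plan has three stages, each of which produces global control from local positivity alone.

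\emph{Stage 1: Green representation and Morrey bound.} We first show that every positive solution $u$ of \eqref{eq:intro-equation} satisfies the exact integral representation
\[
 u(\xi)=c_n\int_{\HH^n}\rho(\eta^{-1}\xi)^{2-Q}u(\eta)^p\dy .
\]
To do this, we test the equation against cutoffs and apply the Harnack inequality for the sub-Laplacian, which gives $\int_{B_R}u^p\le CR^{n}$ at every centre and scale. Subtracting the Riesz potential of $u^p$ then leaves a nonnegative $\Delta_b$-harmonic function. By Liouville's theorem for positive harmonic functions on $\HH^n$, this remainder is a constant, and the constant must be $0$, since otherwise $\Delta_b u\ge c>0$ would contradict the Morrey bound. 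This yields the scale-invariant Morrey estimate stated in the introduction. Both the representation and the estimate are invariant under translations, dilations and CR inversion.

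\emph{Stage 2: Reciprocity and isolation of the bubble manifold.} Let $U=U_{a,\lambda}$ be a bubble. Pairing the representations of $u$ and $U$ gives the symmetric identity $\int u^pU=\int uU^p$, which is finite by the Morrey bound. Write it as
\[
 D(u;U)=\int\bigl(u^pU-U^{p}u\bigr)\cdot(\text{suitable weight})\ge0 .
\]
More concretely, we use convexity of $s\mapsto s^{p}$ to form
\[
 \int U^{p+1}\Phi(u/U)\,dV=0,
\]
where $\Phi\ge0$ is convex and vanishes only at $1$. This turns closeness to the bubble manifold into a nonnegative deficit, and no linearized spectral information is needed. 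Next we differentiate the reciprocity in $(a,\lambda)$, that is, along the conformal orbit. Lifting to the CR sphere by the Cayley transform, this gives a barycentre identity: a vector-valued moment of $u^pU-uU^p$ vanishes. Consequently, the deficit cannot concentrate at a single point of $\SSph^{2n+1}$. Choosing $(a,\lambda)$ by a centre-of-mass normalization then shows that $u$ lies in a neighbourhood of the bubble manifold, with the deficit controlled, even though we have no energy bound.

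\emph{Stage 3: Jerison--Lee defect and bootstrap.} We apply the Jerison--Lee divergence identity $\operatorname{div}X=\mathcal{D}\ge0$, where $\mathcal{D}$ is a sum of squares of the CR tensors that vanish exactly on bubbles. Integrating over $B_R$ bounds $\int_{B_R}\mathcal{D}$ by boundary terms. Using Stage 1, these boundary terms are $O(R^{\gamma})$ for some energy-growth exponent $\gamma$. We then quantize the defect: each concentration region carries a definite amount of $\mathcal{D}$, and the isolation from Stage 2 limits how many such regions can occur. Alternating this defect budget with the Morrey bound improves the growth $\int_{B_R}u^{p+1}\lesssim R^{\gamma}$ to a smaller $\gamma$. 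After finitely many iterations we reach $\gamma<0$, or the boundary terms tend to $0$ along a sequence $R_k\to\infty$, so $\mathcal{D}\equiv0$. The rigidity case of the Jerison--Lee identity (the CR Obata argument) then gives $u=U_{a,\lambda}$, and in particular $u\in L^{p+1}$.

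The main obstacle is Stage 3. The boundary terms in the Jerison--Lee identity involve third-order derivatives of $u$ weighted by powers of $u$, and the Morrey bound gives no pointwise control over them. I would first handle them with cutoffs and the interior estimates that follow from the Green representation. Where concentration could destroy these estimates, I would instead invoke the Stage 2 isolation, blowing up at the concentration points and comparing with bubbles.
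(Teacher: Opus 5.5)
Your Stage~1 works, in a different order from the paper. Testing $Lu=n^2u^p$ against $\phi^m$ already gives $\int_{B_R}u^p\le CR^{Q-2p/(p-1)}=CR^n$. Hence the Riesz potential $v$ of $n^2u^p$ is finite and $u-v$ is $L$-harmonic. You still owe $u-v\ge0$; this follows either from the paper's Dirichlet exhaustion or from the sub-Laplacian mean-value formula together with $R^{-Q}\int_{B_R}v\le CR^{-n}$.

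\textbf{The genuine gap is Stage~2.} The identity $\int U^{p+1}\Phi(u/U)\,dV=0$, with $\Phi\ge0$ vanishing only at $1$, cannot hold, since it would force $u=U$ for every bubble $U$. What reciprocity actually gives is $\frac1{p-1}\int U^{p+1}\cR_p(u/U-1)=\int U^{p+1}-\int U^pu$, a mass gap that is positive for every non-bubble.

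Nor does the barycentre law $\int X_\alpha\cR_p(f)\dV=0$ place an arbitrary solution near the bubble manifold after a centre-of-mass choice. It only forbids the deficit from concentrating at a \emph{single} point, and nothing in your argument produces such concentration. The paper uses the law only sequentially:
\begin{enumerate}
\item Take non-bubbles $W_j\to U_0$ in $C^\infty_{H,\loc}$ with one Morrey constant. The uniform tail gives $F_j\to1$ in $L^1(\SSph^{2n+1})$, hence $\varepsilon_j=(p-1)\mathfrak D_{U_0}(W_j)\to0$.
\item Modulate so that $f_j$ is orthogonal to the coordinate functions. Then $f_j/\sqrt{\varepsilon_j}$ is uniformly integrable; this is where $p\le2$ enters.
\item Its weak limits lie in $\ker\cA_S=\Span\{X_1,\dots,X_Q\}$, so by orthogonality they vanish. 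Local subelliptic estimates upgrade this to $L^2$ convergence to $0$ away from $N$.
\item Thus $\varepsilon_j^{-1}\cR_p(f_j)\dV\weakto\delta_N$. This contradicts the barycentre law, because the $X_\alpha$ cannot all vanish at $N$.
\end{enumerate}
This compactness mechanism, and the hypothesis of local convergence to a bubble that drives it, are what your Stage~2 is missing.

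\textbf{Stage~3 is correspondingly underdetermined, and partly inverted.}
\begin{itemize}
\item Isolation does not limit the number of concentration regions; it yields the quantum. This is proved by contradiction over natural-scale $L$-good balls with $L\to\infty$. A vanishing normalized defect gives a locally convergent subsequence whose limit keeps the exact representation (by the uniform Green tail) and has $\fA\equiv0$. That limit is therefore a bubble by pointwise zero-defect rigidity, not by the theorem being proved, which contradicts isolation.
\item The count of such balls is limited by $\sup_aJ_a(R)\le CR^{D-2}$, which comes from the Jerison--Lee annular inequality and a discrete Riccati lemma. A height-layer packing balanced against the Morrey bound then gives $D\mapsto nD/(n+2)$.
\item The descent ends at $D\le2$, where the Riccati exponent is nonpositive and forces $\fA_f\equiv0$. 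An energy exponent $\gamma<0$ is impossible for a positive solution.
\item All of this needs boundedness. The initial exponent $D_0=n$ comes from $\int W^{p+1}\le\|W\|_\infty\int W^p$. The relative interior estimates giving $\fA_f\le CW^{2-2/n}$ supply the polynomial bound the Riccati lemma requires.
\item So you must classify bounded solutions first, then exclude unbounded ones by a doubling blow-up. The rescalings are non-bubbles converging to a bounded limit, which is a bubble by the bounded case; this contradicts isolation.
\end{itemize}

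Finally, the obstacle you single out is not the real one. The third-order quantity $f_{0\alpha}$ enters the flux only through $G_\alpha$, which is itself part of $\fA_f$. Cauchy--Schwarz against the defect, plus integral bounds for $|\partial f|^4$ and $f_0^2$, closes the cutoff argument without pointwise third-derivative control.
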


Thus finite energy is a conclusion rather than a hypothesis.  Combining
\cref{thm:main} with the result in $\HH^1$ gives the all-dimensional
statement.

\begin{corollary}\label{cor:all-dimensional}
For every $n\ge1$, the positive entire solutions of
\eqref{eq:intro-equation} are exactly the Jerison--Lee bubbles
\eqref{eq:bubble-family-intro}.
\end{corollary}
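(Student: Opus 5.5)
The statement to prove is the corollary. It follows by combining \cref{thm:main} with the $\HH^1$ result of Catino, Li, Monticelli, and Roncoroni \cite{CatinoLiMonticelliRoncoroni2025}. We split according to the dimension.

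\emph{Case $n\ge2$.} Let $u$ be a positive entire solution of \eqref{eq:intro-equation}; we read ``entire'' as $u\in C^2(\HH^n)$, or reduce to that case by hypoelliptic regularity. \Cref{thm:main} then gives $a\in\HH^n$ and $\lambda>0$ with $u=U_{a,\lambda}$.

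\emph{Case $n=1$.} Here $Q=4$ and $p=3$, and we invoke the unconditional classification of \cite{CatinoLiMonticelliRoncoroni2025}. The one point requiring care is normalization. Their equation may be written with a different constant in front of $\Delta_b$ or of $u^p$, or with a different convention for the sublaplacian. In that case we rescale $u\mapsto cu$: since $p\neq1$, a positive constant multiple converts one normalization into the other. We then check that their bubble family becomes exactly $\{U_{a,\lambda}\}$ for $n=1$. It suffices to do this for one member, because both families are orbits of the same group of left translations and dilations, which preserves the equation. For the basic bubble $U_0=2((1+|z|^2)^2+t^2)^{-1/2}$ we verify $4\Delta_bU_0=U_0^3$ directly in the normalization \eqref{eq:intro-equation}.

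\emph{Converse direction.} We must show that every $U_{a,\lambda}$ solves \eqref{eq:intro-equation} for all $n\ge1$. Positivity is clear from \eqref{eq:basic-bubble-intro}. That $U_0$ solves the equation is either a direct computation or part of Jerison--Lee's work \cite{JerisonLee1988}. Invariance of the equation under left translations, and under the scaling $\lambda^{-n}U_0\circ\delta_{\lambda^{-1}}$ (which has the critical weight $\lambda^{-(Q-2)/2}=\lambda^{-n}$), then gives the whole family.

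The only step with any real content is matching the $\HH^1$ normalization and bubble family to \eqref{eq:bubble-family-intro}. It is routine but must be done explicitly, using the rescaling $u\mapsto cu$ described above.
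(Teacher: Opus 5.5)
Your proposal is correct and matches the paper's proof: \cref{thm:main} handles $n\ge2$, and the unconditional $\HH^1$ theorem of Catino, Li, Monticelli, and Roncoroni handles $n=1$. The paper leaves implicit the extra bookkeeping you supply, namely the converse inclusion via affine invariance and the normalization matching; this is routine, except that a differing group-law convention would also need the isomorphism $(z,t)\mapsto(z,ct)$ in addition to $u\mapsto cu$.
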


The proof yields two intermediate results with independent content.
The first isolates the bubble manifold in the class to which, by
\cref{sec:riesz-morrey}, every positive entire solution belongs.

\begin{theorem}[Sequential bubble isolation]\label{thm:intro-isolation}
Let $n\ge2$, and let $(W_j)$ be positive entire solutions of
\eqref{eq:intro-equation} with a uniform exact Green representation
and uniform all-centre critical Morrey bounds.  If
\[
 W_j\longrightarrow U
 \quad\hbox{in }C^\infty_{H,\loc}(\HH^n)
\]
for a Jerison--Lee bubble $U$, then $W_j$ is itself a Jerison--Lee
bubble for all sufficiently large $j$.
\end{theorem}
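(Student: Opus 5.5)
We follow the route outlined in the abstract, carrying it out on the sequence $(W_j)$. Two consequences of the exact Green representation are used: reciprocity with a bubble, and the barycentre identity obtained by differentiating reciprocity along the conformal orbit of the bubble.

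\textbf{Step 1: modulation.} By the invariance of \eqref{eq:intro-equation} under left translations and dilations, we may assume $U=U_0$. For each large $j$ we choose parameters $(a_j,\lambda_j)\to(0,1)$ such that the bubble $U_j:=U_{a_j,\lambda_j}$ is a best fit to $W_j$ in a weighted sense adapted to the Green representation. Concretely, we require $W_j-U_j$ to be orthogonal to the tangent space of the bubble manifold at $U_j$. This tangent space is spanned by the derivatives of $U_{a,\lambda}$ in $a$ and $\lambda$. The orthogonality is taken against the weight $U_j^{p-1}$, which is bounded and decays because $U_j$ is a bubble, even though $W_j$ is not assumed to have finite energy.

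Local convergence $W_j\to U_0$ in $C^\infty_{H,\loc}$ makes the modulation map a small perturbation of the identity on compact sets. The contribution from the region far out can be controlled by the uniform all-centre Morrey bounds $\int_{B_R(a)}W_j^p\le CR^n$ together with the decay of $U_j^{p-1}$ and of the Green kernel. The implicit function theorem then yields $(a_j,\lambda_j)$.

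\textbf{Step 2: the convex deficit.} We pair the Green representation of $W_j$ with that of $U_j$, which is the reciprocity step. Symmetry of the Green kernel gives
\[
\int W_j^{p}\,U_j=\int W_j\,U_j^{p}.
\]
Both sides are finite by the Morrey bound and the decay of $U_j$. Rearranging this identity, we obtain a quantity of the form
\[
D_j=\int U_j^{p+1}\,\Phi\bigl(W_j/U_j\bigr),
\]
where $\Phi(s)=s^p-1-p(s-1)$, or a similar expression. Such a $\Phi$ is nonnegative, strictly convex, and vanishes only at $s=1$. The point of the identity is that $D_j$ equals a combination of terms that vanish exactly by the orthogonality conditions of Step~1, up to higher-order errors. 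No sign information on the linearized quadratic form is needed, because $\Phi\ge0$ holds pointwise.

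Pulling everything back to the CR sphere by the Cayley transform, $D_j$ becomes an integral of $\Phi(w_j)$ with $w_j\to1$ locally away from the pole. The only way $D_j$ can fail to be small is through mass that concentrates at one point of the sphere, namely the image of infinity or a point where local convergence degenerates.

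\textbf{Step 3: excluding concentration.} We differentiate the reciprocity identity along the conformal orbit of $U_j$, that is, along dilations and translations and also the CR inversion directions. This gives a barycentre identity: a vector-valued weighted moment of the deficit density vanishes. A deficit density concentrating at a single point $\zeta$ of the sphere would have barycentre close to $\zeta$, which is nonzero, contradicting the identity. Combined with local convergence, this forces $D_j\to0$ at a quantitative rate that is superlinear in the distance $\|W_j/U_j-1\|$ measured in the convex gauge.

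The final step closes the argument by a nondegeneracy bound. The bubble manifold is nondegenerate: the kernel of the linearization consists exactly of the tangent directions, a fact we take from the Jerison--Lee classification of finite-energy solutions applied to the linearized problem on the sphere. Using it, we show that the deficit controls the distance quadratically from below. The identity from Step~2, however, bounds the deficit by a cubic quantity. The two together force $W_j\equiv U_j$ for large $j$.

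\textbf{Expected main obstacle.} We expect Step~3 to be the hardest. The barycentre identity must be made quantitative using only Morrey bounds, without any energy bound. We would first attempt a blow-up at the putative concentration point. The Morrey bound shows that the rescaled deficit measures converge to a measure supported at one point, and the barycentre identity, which passes to the limit by the Green representation, then forces that measure to have zero mass.
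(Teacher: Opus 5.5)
You have the right ingredients: modulation orthogonal to the $Q$ tangent directions, the convex deficit from reciprocity, and the barycentre identity obtained by differentiating reciprocity. There is, however, a genuine gap in how you assemble them.

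\textbf{The closing step does not work.} The reciprocity identity is an exact equality,
$(p-1)\mathfrak D_U(W)=\int U^{p+1}\cR_p(W/U-1)$, so it cannot supply the ``cubic upper bound'' you invoke. Nor is $\mathfrak D_U(W)$ a combination of terms killed by the orthogonality conditions. On the sphere, $\mathfrak D_{U_0}(W)=-\int f\dV$ pairs $f$ with the constant, which is not in $\ker\cA_S$.

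A closure through invertibility of $\cA_S$ off its kernel would need $f_j$ small in a norm in which $\cR_p(f_j)$ is superlinearly small. Without an energy bound you only know $f_j\to0$ in $L^1$ and locally uniformly away from the point at infinity $N$; $f_j$ may be large near $N$. A linearized quadratic lower bound would also require coercivity, which fails along constants because $\cA_S1=-(p-1)\kappa_S$. That is precisely the sign information the convex deficit is meant to avoid.

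Your framing is also off. The deficit $\varepsilon_j=\int\cR_p(f_j)\dV$ always tends to $0$, by global $L^1$ convergence together with $\int F_j^p\dV=\int F_j\dV$. The real question is how large $f_j$ is locally compared with $\sqrt{\varepsilon_j}$.

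\textbf{The missing idea.} You must show that the normalized defect $\nu_j=\varepsilon_j^{-1}\cR_p(f_j)\dV$ converges to $\delta_N$. This needs $\int_Kf_j^2\dV=o(\varepsilon_j)$ on every compact $K$ avoiding $N$, and neither local convergence nor the Morrey bound gives that. The paper obtains it as follows.
\begin{enumerate}
\item Set $g_j=f_j/\sqrt{\varepsilon_j}$. The two-scale bound on $\cR_p$, together with $p\le2$, makes $(g_j)$ uniformly integrable.
\item $\cA_Sg_j=\kappa_S\cR_p(f_j)/\sqrt{\varepsilon_j}$ has $L^1$ norm $\kappa_S\sqrt{\varepsilon_j}\to0$. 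Hence weak limits lie in $\ker\cA_S=\Span\{X_1,\ldots,X_Q\}$, and the modulation orthogonality forces them to vanish.
\item Local subelliptic estimates and Rellich away from $N$ upgrade this to $g_j\to0$ in $L^2_{\loc}$ off $N$.
\end{enumerate}
This is the only place nondegeneracy enters, and it comes from the bispherical spectrum, not from the Jerison--Lee classification.

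Once you have this, the barycentre identity needs no quantitative version. It is exact, $\int X_\alpha\,d\nu_j=0$, and passing to the limit gives $X_\alpha(N)=0$ for every $\alpha$, which is impossible on the unit sphere. No quadratic/cubic comparison is needed. Your ``expected obstacle'' paragraph comes close to this argument, but it attributes the one-point concentration to the Morrey bound, which cannot produce it.
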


The hypotheses of \cref{thm:intro-isolation} involve no energy bound
of any kind.  We expect the theorem to be relevant to a priori
estimates near isolated blow-up points for Yamabe-type equations on CR
manifolds, in the sense of \cite[Remark~1.3]{FlynnVetois2023} and by
analogy with the Riemannian compactness theory
\cite{KhuriMarquesSchoen2009}.

The second turns a geometric defect budget into an improved energy
budget.  Let $\fA_f\ge0$ denote the Jerison--Lee density defined in
\cref{sec:jl-defect}, and put
\[
 J_a(R)=\int_{B_R(a)}\fA_f.
\]

\begin{theorem}[Defect budget and energy improvement]
\label{thm:intro-improvement}
Let $W$ be a bounded positive exact-Riesz/Morrey solution of
\eqref{eq:intro-equation} which is not a bubble.  Suppose that, for
some $d\in[0,n-2]$,
\[
 \sup_{a\in\HH^n}J_a(R)\le CR^d
 \qquad(R\ge1).
\]
Then
\begin{equation}\label{eq:intro-energy-improvement}
 \sup_{a\in\HH^n}\int_{B_R(a)}W^{p+1}
 \le C'R^{\alpha(d)},
 \qquad
 \alpha(d)=\frac{n(d+2)}{n+2}.
\end{equation}
Consequently, an energy-growth exponent $D>2$ improves to
$nD/(n+2)$; when $D\le2$, the Jerison--Lee defect vanishes.
\end{theorem}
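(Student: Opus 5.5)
The plan is to localize the Jerison--Lee divergence identity with cutoffs at scale $R$ and play the defect budget against the Morrey bound. Write $W=f^{-n/2}$ (or whatever substitution the Jerison--Lee density uses), so that $\fA_f\ge0$ is a sum of squares of the trace-free tensors whose vanishing characterizes bubbles. The identity reads $\operatorname{div}(\text{vector field } \mathcal V)=\fA_f$ pointwise. The vector field $\mathcal V$ is cubic in derivatives of $f$, weighted by powers of $W$. Each term of $\mathcal V$ has the schematic form (tensor appearing in $\fA_f$)$\times$(first derivatives)$\times$(power of $W$). Multiplying by a cutoff $\eta^s$ on $B_{2R}(a)$ and integrating by parts gives
\[
 \int\eta^s\fA_f\le \frac{C}{R}\int_{B_{2R}(a)}\eta^{s-1}|\mathcal V|.
\]

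The key step is to split $|\mathcal V|$ by Cauchy--Schwarz into a factor $(\fA_f)^{1/2}$ and a factor carrying only the energy density $W^{p+1}$. The latter is controlled through gradient bounds, which come from boundedness of $W$ and interior horizontal estimates: $|\nabla_H W|^2\lesssim$ a local average of $W^{p+1}$ at unit scale. This should give
\[
 J_a(R)\le \frac{C}{R}J_a(2R)^{1/2}\Bigl(\int_{B_{2R}(a)}W^{p+1}\Bigr)^{1/2}+\text{error},
\]
with the exponents arranged homogeneously. That alone bounds the defect by the energy. For the improvement I need the reverse direction: energy controlled by defect.

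For that I use that $W$ is not a bubble, together with the quantization/isolation of \cref{thm:intro-isolation}. Energy in a ball should either be carried by the nonvanishing defect or be close to bubble profiles. Bubble profiles are excluded at large scale because otherwise a rescaled limit would be a bubble and isolation would force $W$ itself to be a bubble. Concretely, I aim for an interpolation inequality
\[
 \int_{B_R}W^{p+1}\le C\Bigl(\int_{B_{2R}}W^p\Bigr)^{\theta}\bigl(R^{\beta}J(2R)\bigr)^{1-\theta}.
\]
I would obtain it from Hölder between $W^p$ (Morrey, $\le CR^n$) and a defect-weighted quantity, choosing $\theta$ so that inserting $J\le CR^d$ and the Morrey bound gives the exponent $n(d+2)/(n+2)$. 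The check is that $\theta=2/(n+2)$ and $1-\theta=n/(n+2)$ match $p+1=(2n+2)/n$ scaling. The consequence statement then follows. Feeding the defect estimate $J\lesssim R^{D-2}$ (from the first inequality with energy growth $R^D$) into $\alpha$ gives $nD/(n+2)$. When $D\le2$, letting $R\to\infty$ in the first inequality forces $J\equiv0$.

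The main obstacle is the reverse inequality, bounding the energy by the defect. Non-bubble-ness gives no quantitative lower bound for $\fA_f$ in terms of $W^{p+1}$. I would first try a contradiction/compactness argument at each scale. If the ratio of energy to the defect-interpolant blew up along centres $a_j$ and scales $R_j$, I would renormalize $W$. The uniform Green representation and Morrey bounds give a $C^\infty_{H,\loc}$ limit with zero defect, hence a bubble. Then \cref{thm:intro-isolation} applied to the translated and dilated copies contradicts the assumption that $W$ is not a bubble. Boundedness of $W$ is used to prevent concentration in this limiting step.
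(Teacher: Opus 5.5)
\paragraph{Gap in the reverse direction.} The direction you flag as the main obstacle, bounding energy by defect, has a genuine gap, and the compactness argument you sketch would not close it.

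\begin{itemize}
\item Rescaling at the ball scale $R_j$, i.e.\ $R_j^nW(a_j\delta_{R_j}\cdot)$, produces functions with supremum $R_j^n\|W\|_\infty\to\infty$. Boundedness of $W$ gives no local bound after this rescaling, so \cref{lem:expanding-compactness} and \cref{prop:riesz-compactness} do not apply, and concentration is not excluded.
\item To use \cref{prop:zero-defect-rigidity} and \cref{thm:intro-isolation} you need $\fA\equiv0$ on all of $\HH^n$ and $C^\infty_{H,\loc}(\HH^n)$ convergence. That requires small normalized defect on windows of radius $L_j\to\infty$ (cf.\ \cref{rem:quantum-quantifiers}). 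A blown-up ratio on one ball does not give this.
\end{itemize}

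The missing idea is to work at the natural scale $\mu(a)=W(a)^{-1/n}$, and only at doubling-good points. There, expanding-window compactness plus isolation yields a uniform quantum
\[
 \int_{B_{L_0\mu(a)}(a)}\fA_f\ge\varepsilon_0\,\mu(a)^{-2}
\]
(\cref{prop:defect-quantum}). This is a lower bound on the defect, not an interpolation inequality. Since $\fA_f$ has no pointwise lower bound in terms of $W$, no H\"older step can produce your inequality.

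The exponent comes instead from a height-layer covering.
\begin{itemize}
\item On $E_\mu=\{\mu\le W^{-1/n}<2\mu\}\cap B_R(a_0)$, take a maximal $A_*L_0\mu$-separated set.
\item Move its points by \cref{lem:doubling} to good points $y_i$ with $\nu_i<2\mu$ and displacement at most $4L_0\mu$. This gives disjoint balls, each carrying defect at least $c\mu^{-2}$.
\item Counting them against $J\le CR^d$ gives $|E_\mu|\le CR^d\mu^{Q+2}$ (\cref{lem:same-level-packing}).
\item Hence $\int_{E_\mu}W^{p+1}\le C\min\{R^d\mu^2,R^n\mu^{-n}\}$, and summing dyadically in $\mu$ gives $R^{n(d+2)/(n+2)}$.
\end{itemize}
Your arithmetic $\theta=2/(n+2)$, $\beta=0$ is consistent with this balancing, but the geometric mean arises from optimizing over layers, not from H\"older.

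\paragraph{Problems in the first step.} The claim $|\nabla_HW|^2\lesssim$ (unit-scale average of $W^{p+1}$) is false. Already for a bubble, $|\nabla_HU_0|^2\sim\rho^{-4n-2}$ while $U_0^{p+1}\sim\rho^{-4n-4}$.

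What the relative interior estimates actually give is $|\partial f|,|f_0|\le C$. This bounds $w(|\partial f|^4+f_0^2)$ only by $CW^{q_-}$, producing the coefficient $R^{-2}\int_{B_{2R}}W^{q_-}$. With the available bounds this is $CR^2$ for $2\le n\le4$ and $CR^{2+(n-4)D/n}$ for $n>4$. Either way the exponent $b$ in \cref{lem:discrete-riccati} stays positive, so the descent never reaches $\fA_f\equiv0$.

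The paper handles these lower-order terms by two further integrations by parts (\cref{lem:jl-f0-control,lem:jl-gradient-control}). These give $R^{-4}\int_{B_{2R}}W^{q_-}$, which \cref{lem:lower-power-D} shows is of lower order.

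Finally, the endpoint $D=2$ needs the annular difference $J_a(2R)-J_a(R)$, which Cauchy--Schwarz over $\{\partial\eta\ne0\}$ provides, together with the Riccati argument. An inequality $J(R)^2\le CJ(2R)$ alone is satisfied by nonzero constants, so letting $R\to\infty$ does not force $J\equiv0$.
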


\subsection{Two exact identities}

The first global input is itself unconditional.  With
$L=4\Delta_b=-\Delta_H$ and $\GammaH$ its positive fundamental
solution, every positive entire solution satisfies
\begin{equation}\label{eq:intro-riesz}
 u(\xi)=n^2\int_{\HH^n}
 \GammaH(\eta^{-1}\xi)u(\eta)^p\dy.
\end{equation}
There is no additive harmonic term: positivity makes such a remainder
constant, and a positive constant would make the Green potential
infinite.  Positivity of the same kernel then gives the all-centre
critical Morrey estimate
\begin{equation}\label{eq:intro-morrey}
 \int_{B_R(a)}u^p\le CR^n.
\end{equation}
Both statements are preserved by the affine conformal action.

Now let $U$ be a bubble, let $W$ be a solution in this class, and write
$F=W/U$.  Symmetry of the Green kernel and Tonelli's theorem give
\[
 \int_{\HH^n}U^pW=\int_{\HH^n}UW^p.
\]
For
\[
 \cR_p(s)=(1+s)^p-1-ps,
\]
this reciprocity is equivalent to the exact formula
\begin{equation}\label{eq:intro-deficit}
 \mathfrak D_U(W)
 :=\int U^{p+1}-\int U^pW
 =\frac1{p-1}\int U^{p+1}
        \cR_p\!\left(\frac WU-1\right)\ge0.
\end{equation}
All integrals in \eqref{eq:intro-deficit} are over $\HH^n$.  The
nonnegativity comes from strict convexity rather than the sign of a
linearized quadratic form.  Only in the special case $p=2$ does this
reduce to the square deficit $\int U(W-U)^2$.  The statement and its
short proof are recorded in \cref{cor:convex-deficit}.  Deficits of
this shape also occur in the quantitative stability theory for the
Folland--Stein inequality \cite{ChenFanLiao2025}; here
$\mathfrak D_U$ is used only as an exact algebraic quantity, and no
stability inequality enters the argument.

The second identity appears after Cayley compactification.  On the CR
sphere, write again $F=1+f$.  Differentiating Green reciprocity along
the $Q$-dimensional conformal orbit of the constant solution gives
\begin{equation}\label{eq:intro-barycenter}
 \int_{\SSph^{2n+1}}X_\alpha\cR_p(f)\dV=0,
 \qquad \alpha=1,\ldots,Q.
\end{equation}
This is a nonlinear barycentre law for a nonnegative density.  It is
also exactly compatible with the bubble family: reciprocity between
any two bubbles gives \eqref{eq:intro-barycenter} without an
asymptotic expansion.

Suppose now that non-bubbles converge locally to one bubble.  After
modulation transverse to the conformal orbit, the normalized convex
deficit can only escape through the point at infinity of the Cayley
chart.  It would therefore converge to a Dirac mass.  Equation
\eqref{eq:intro-barycenter} forbids this, because a point of the unit
sphere cannot have all $Q$ coordinate functions equal to zero.  This
is the sequential isolation mechanism in \cref{thm:intro-isolation}.

\subsection{Quantization and finite descent}

For a bounded solution, Harnack inequalities and relative interior
estimates control the Jerison--Lee defect pointwise.  The $m=0$
divergence identity, integrated against a cutoff, gives the annular
estimate
\begin{equation}\label{eq:intro-annular}
 J_a(R)^2
 \le CR^{-2}\left(
  \int_{B_{2R}(a)}W^{p+1}
  +R^{-4}\int_{B_{2R}(a)}W^{2-2/n}
 \right)\bigl(J_a(2R)-J_a(R)\bigr).
\end{equation}
A discrete Riccati argument turns an energy budget into a defect
budget.  Conversely, sequential isolation implies a uniform defect
quantum on every doubling-good natural-scale ball.  A height-layer
covering then combines that quantum with \eqref{eq:intro-morrey} and
proves \eqref{eq:intro-energy-improvement}.

The two budgets alternate.  If
$\int_{B_R(a)}W^{p+1}\le CR^D$, then
\eqref{eq:intro-annular} gives a defect budget of dimension $D-2$,
and \cref{thm:intro-improvement} returns the new energy dimension
\[
 D\longmapsto\frac{n}{n+2}D.
\]
Starting from the elementary bounded-solution estimate $D_0=n$,
finitely many steps reach $D\le2$.  The annular inequality then forces
$\fA_f\equiv0$, and the pointwise rigidity system identifies $W$ as
a bubble.  An enhanced doubling argument reduces the unbounded case to
the bounded classification and sequential isolation.

\subsection{Scope and algebraic inputs}

At the level of identities, \eqref{eq:intro-deficit} and
\eqref{eq:intro-barycenter} use only conformal covariance and an exact
positive Green representation.  They therefore have direct Euclidean
analogues.  On $\HH^n$, the critical exponent is $p=1+2/n$, so
$p\le2$ holds automatically for every $n\ge2$ and imposes no further
restriction on the present theorem.  For the Euclidean critical
exponent, the same condition holds only when $N\ge6$.  In this respect
the CR problem is more favourable; the Euclidean dimensions
$N=3,4,5$ would require a different uniform-integrability argument.

The two algebraic inputs are located explicitly.  First,
$\fA_f\equiv0$ implies that the solution is a Jerison--Lee bubble;
\cref{prop:zero-defect-rigidity} proves this on all of $\HH^n$,
without decay or integrability.  The only external local ingredient in
that rigidity step is the standard characterization of
CR-pluriharmonic functions
\cite{Bedford1980,Lee1988pseudoEinstein}.  This is also where the
assumption $n\ge2$ is structural: when $n=1$, the trace-free tensor
$E_{\alpha\bar\beta}$ is identically zero.  Second,
\cite[Proposition~4.1, formula~(4.2)]{JerisonLee1988} supplies the
pointwise $m=0$ divergence identity.  \Cref{app:jl-cutoff}
translates that identity into our conventions and proves the complete
cutoff argument leading to \eqref{eq:intro-annular}.  No growth
hypothesis or Liouville theorem from \cite{FlynnVetois2023} enters
that argument.

\subsection{Organization}

The logical order is strictly
\cref{sec:riesz-morrey} $\to$ \cref{sec:sphere-isolation} $\to$
\cref{sec:defect-quantization} $\to$ \cref{sec:dimension-descent}.
In particular, the proof of \cref{thm:intro-isolation} uses only the
exact Green representation and the Morrey bound of
\cref{sec:riesz-morrey}; it uses nothing from \cref{sec:jl-defect}
onwards.  In \cref{prop:defect-quantum} the limiting profile is
identified as a bubble by the pointwise rigidity of
\cref{prop:zero-defect-rigidity}, not by the bounded classification of
\cref{thm:bounded-profile}.  There is therefore no circularity in the
alternation of the two budgets.

\Cref{sec:preliminaries} fixes the geometry and compactness tools.
\Cref{sec:riesz-morrey} proves the exact Riesz representation, the
Morrey bound, and Green reciprocity.  \Cref{sec:sphere-isolation}
proves sequential bubble isolation.  \Cref{sec:jl-defect} introduces
the tensor defect and the annular estimate.
\Cref{sec:defect-quantization} proves the uniform quantum and the
height-layer energy improvement, while \cref{sec:dimension-descent}
completes the iteration and proves \cref{thm:main}.  The appendices
record the conformal normalizations and zero-defect rigidity, the
standard subelliptic inputs, and the complete $m=0$ cutoff argument
from the Jerison--Lee identity.

\subsection*{Acknowledgements}

The author thanks Congwen Liu for helpful correspondence on the
geometry of the CR inversion on $\HH^n$; Xi-Nan Ma for telling him
about the independent work with Jiahuan Li and Yilu Liu, and for
sharing their manuscript; Yanyan Li for correspondence that improved
the account of the earlier higher-dimensional results; and Joshua Flynn
for his comments on an earlier version.

\section{Geometry, normalization, and compactness tools}\label{sec:preliminaries}

\subsection{The Heisenberg group}

We use the group law
\begin{equation}\label{eq:group-law}
 (z,t)(w,s)
 =\bigl(z+w,t+s+2\Ima(z\cdot\overline w)\bigr)
 \qquad(z,w\in\CC^n,\ t,s\in\RR).
\end{equation}
The anisotropic dilations are
\[
 \delta_r(z,t)=(rz,r^2t),
 \qquad r>0,
\]
and Haar measure scales by $|\delta_rE|=r^Q|E|$, where $Q=2n+2$.  Writing $z_j=x_j+\ii y_j$, the standard real horizontal fields are
\[
 X_j=\partial_{x_j}+2y_j\partial_t,
 \qquad
 Y_j=\partial_{y_j}-2x_j\partial_t.
\]
We set
\begin{equation}\label{eq:L-def}
 \Delta_H=\sum_{j=1}^n(X_j^2+Y_j^2),
 \qquad
 L=-\Delta_H=4\Delta_b.
\end{equation}
Thus \eqref{eq:intro-equation} becomes
\begin{equation}\label{eq:main-equation}
 Lu=n^2u^p,
 \qquad p=1+\frac2n.
\end{equation}
Any positive distributional solution is smooth by the usual subelliptic bootstrap, so no distinction between classical and smooth positive solutions will be needed below.

The Kor\'anyi gauge and its left-invariant distance are
\begin{equation}\label{eq:koranyi}
 \rho(z,t)=\bigl(|z|^4+t^2\bigr)^{1/4},
 \qquad
 d(\xi,\eta)=\rho(\eta^{-1}\xi).
\end{equation}
With the normalization in \eqref{eq:group-law}, this is a genuine
left-invariant metric, not merely a quasi-distance; see
\cite{Cygan1981}.  In particular, the triangle inequality may be used
without changing constants.
We write $B_R(a)=\{\xi:d(a,\xi)<R\}$.  Replacing $d$ by any fixed homogeneous distance changes only dimensional constants.  In particular,
\begin{equation}\label{eq:volume-growth}
 |B_R(a)|=\omega_QR^Q.
\end{equation}

The positive fundamental solution of $L$ is
\begin{equation}\label{eq:fundamental-solution}
 \GammaH(\xi)=c_Q\rho(\xi)^{2-Q},
 \qquad
 L\GammaH=\delta_e,
\end{equation}
for a dimensional constant $c_Q>0$ \cite{Folland1973}.  We shall repeatedly use the two-sided estimate
\begin{equation}\label{eq:green-kernel-estimate}
 \GammaH(\eta^{-1}\xi)\asymp d(\xi,\eta)^{2-Q}.
\end{equation}
The value of $c_Q$ here is rescaled to the operator $L$ in \eqref{eq:L-def}; this avoids importing the factor $1/4$ present in some conventions for the Heisenberg sub-Laplacian.

\subsection{Conformal affine action and bubbles}

For $a\in\HH^n$ and $\mu>0$, define
\begin{equation}\label{eq:rescaling}
 (\cT_{a,\mu}u)(\zeta)
 =\mu^nu(a\delta_\mu\zeta).
\end{equation}
The homogeneity of $L$ and the identity $n(p-1)=2$ imply that \eqref{eq:main-equation} is invariant under $\cT_{a,\mu}$.  Its inverse is
\[
 (\cT_{a,\mu}^{-1}v)(\xi)
 =\mu^{-n}v\!\left(\delta_{\mu^{-1}}(a^{-1}\xi)\right).
\]
The function $U_0$ in \eqref{eq:basic-bubble-intro} solves \eqref{eq:main-equation}; its affine orbit is exactly \eqref{eq:bubble-family-intro}.  We denote this $Q$-dimensional manifold by
\[
 \cM_{\rm bub}
 =\{U_{a,\lambda}:a\in\HH^n,\ \lambda>0\}.
\]
Membership in $\cM_{\rm bub}$ is invariant under every $\cT_{a,\mu}$.

\begin{definition}\label{def:riesz-morrey-class}
A family $(u_j)$ of positive solutions of \eqref{eq:main-equation} is called a \emph{uniform exact-Riesz/Morrey family} if
\begin{equation}\label{eq:uniform-riesz}
 u_j(\xi)
 =n^2\int_{\HH^n}\GammaH(\eta^{-1}\xi)u_j(\eta)^p\dy
\end{equation}
for every $j$ and $\xi$, and if one constant $C_M$ satisfies
\begin{equation}\label{eq:uniform-morrey}
 \int_{B_R(a)}u_j^p\le C_MR^n
 \qquad(a\in\HH^n,\ R>0,\ j\ge1).
\end{equation}
The same terminology is used for a single solution.
\end{definition}

The change of variables in \eqref{eq:rescaling} shows that \eqref{eq:uniform-riesz}--\eqref{eq:uniform-morrey} are invariant, with the same Morrey constant, under $\cT_{a,\mu}$.

\subsection{An enhanced doubling lemma}

We record the metric selection lemma in the form needed later.  It is the standard doubling argument of Pol\'a\v cik, Quittner, and Souplet \cite{PolacikQuittnerSouplet2007}, with the displacement estimate retained from its proof.

\begin{lemma}[Doubling with displacement]\label{lem:doubling}
Let $(X,d)$ be a complete metric space and let $M:X\to(0,\infty)$ be locally bounded.  Given $x\in X$ and $k>0$, there exists $y\in X$ such that
\begin{equation}\label{eq:doubling-displacement}
 M(y)\ge M(x),
 \qquad
 d(x,y)\le\frac{2k}{M(x)},
\end{equation}
and
\begin{equation}\label{eq:doubling-good}
 M(z)\le2M(y)
 \qquad\text{whenever }d(z,y)\le\frac{k}{M(y)}.
\end{equation}
\end{lemma}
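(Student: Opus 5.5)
We follow the standard Poláčik--Quittner--Souplet iteration and keep track of how far the point moves. If $x$ itself satisfies \eqref{eq:doubling-good}, that is, if $M(z)\le 2M(x)$ whenever $d(z,x)\le k/M(x)$, we take $y=x$ and there is nothing more to do. Otherwise we build a sequence $x_0=x,x_1,x_2,\dots$ inductively. As long as $x_i$ fails \eqref{eq:doubling-good}, we choose $x_{i+1}$ with
\[
 d(x_{i+1},x_i)\le\frac{k}{M(x_i)},\qquad M(x_{i+1})>2M(x_i).
\]
The first point $x_i$ at which \eqref{eq:doubling-good} holds will be our $y$.

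By induction $M(x_i)\ge 2^iM(x)$, and in particular $M(x_i)\ge M(x)$ along the whole sequence. This gives the first part of \eqref{eq:doubling-displacement} for any stopping point. The displacements form a geometric series:
\[
 d(x,x_i)\le\sum_{l=0}^{i-1}\frac{k}{M(x_l)}\le\frac{k}{M(x)}\sum_{l\ge0}2^{-l}=\frac{2k}{M(x)}.
\]
So every $x_i$ lies in the closed ball $\bar B:=\{z:d(z,x)\le 2k/M(x)\}$, which also gives the second part of \eqref{eq:doubling-displacement}.

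It remains to see that the process stops after finitely many steps. This is the only step that uses the hypotheses on $X$ and $M$. If it never stopped, then $(x_i)$ would be a Cauchy sequence, since $d(x_i,x_m)\le k2^{1-i}/M(x)$ for $m>i$. By completeness it would converge to some $x_\infty\in\bar B$. Local boundedness of $M$ at $x_\infty$ gives a neighbourhood of $x_\infty$ on which $M$ is bounded, and $x_i$ eventually lies in that neighbourhood. This contradicts $M(x_i)\ge 2^iM(x)\to\infty$. Hence some $x_i$ satisfies \eqref{eq:doubling-good}, and we set $y=x_i$.

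We do not need closed balls to be compact, and we do not need $M$ to be continuous; completeness together with local boundedness at the single limit point is enough. On $\HH^n$ with the Korányi distance $d$ both hypotheses hold, so the lemma will be applied there with $M$ a suitable power of $u$.
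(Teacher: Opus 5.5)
Your proof is correct and follows the paper's argument essentially step for step: the same Pol\'a\v cik--Quittner--Souplet iteration, the same geometric-series bound giving $d(x,y)\le 2k/M(x)$, and the same contradiction between completeness and local boundedness when the procedure does not stop. The only difference is phrasing: the paper invokes local boundedness on the compact set formed by the sequence and its limit, while you use a bounded neighbourhood of the limit point, and these are equivalent.
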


\begin{proof}
If \eqref{eq:doubling-good} holds at $x$, take $y=x$.  Otherwise choose $x_1$ with
\[
 d(x,x_1)\le kM(x)^{-1},
 \qquad M(x_1)>2M(x).
\]
If the conclusion again fails, iterate.  After $j$ steps,
\[
 M(x_j)>2^jM(x),
 \qquad
 d(x_j,x_{j+1})\le k2^{-j}M(x)^{-1}.
\]
If the procedure did not stop, $(x_j)$ would be Cauchy and contained in the compact set formed by the sequence and its limit, whereas $M(x_j)\to\infty$, contradicting local boundedness.  At the stopping point $y=x_N$, \eqref{eq:doubling-good} holds, and
\[
 d(x,y)\le\sum_{j=0}^{N-1}d(x_j,x_{j+1})
 \le\frac{k}{M(x)}\sum_{j=0}^\infty2^{-j}
 =\frac{2k}{M(x)}.
\]
\end{proof}

For a positive solution $u$, we apply \cref{lem:doubling} to
\[
 M=u^{1/n}.
\]
If $\mu=M(y)^{-1}$, the natural rescaling
\begin{equation}\label{eq:natural-rescaling}
 V(\zeta)=\mu^nu(y\delta_\mu\zeta)
\end{equation}
satisfies
\begin{equation}\label{eq:natural-good}
 V(e)=1,
 \qquad
 V\le2^n\quad\text{on }B_k(e).
\end{equation}

\subsection{Local compactness on expanding good balls}

We use the following standard consequence of Harnack inequalities and interior estimates for sums of squares; the analytic references are collected in \cref{app:analytic-inputs}.

\begin{lemma}[Expanding-window compactness]\label{lem:expanding-compactness}
Let $u_j$ solve \eqref{eq:main-equation} and suppose
\[
 u_j(e)=1,
 \qquad
 0<u_j\le C_0\quad\text{on }B_{R_j}(e),
 \qquad R_j\to\infty.
\]
Then, after passing to a subsequence,
\[
 u_j\longrightarrow u_\infty
 \quad\text{in }C^\infty_{H,\loc}(\HH^n),
\]
where $u_\infty$ is a positive entire solution, $u_\infty(e)=1$, and $u_\infty\le C_0$.
\end{lemma}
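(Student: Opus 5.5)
The proof is the standard compactness scheme for sums of squares: uniform local bounds, then Harnack, then interior estimates, then Arzelà--Ascoli with a diagonal argument.

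\textbf{Step 1: local $L^\infty$ bounds.} Fix $R>0$. For $j$ large we have $R_j>2R$, so $0<u_j\le C_0$ on $B_{2R}(e)$. Hence the right-hand side $n^2u_j^p$ is bounded by $n^2C_0^p$ there.

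\textbf{Step 2: interior estimates.} We invoke the subelliptic regularity theory recorded in \cref{app:analytic-inputs}. First, Folland--Stein $S^{k,q}$ estimates for $L=-\Delta_H$ give uniform bounds on $u_j$ in $S^{2,q}(B_{3R/2}(e))$ for every $q<\infty$. By Folland's embedding these bounds pass to the Folland--Stein H\"older spaces $\Gamma^{1,\alpha}$. Next, $u_j^p$ is a smooth function of $u_j$ away from $0$, but $p=1+2/n$ is not an integer, so the bootstrap to higher orders needs a positive lower bound on $u_j$.

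\textbf{Step 3: Harnack lower bound.} The solution $u_j$ satisfies $Lu_j=c_ju_j$ with $c_j=n^2u_j^{p-1}$, and $0\le c_j\le n^2C_0^{p-1}$ on $B_{2R}(e)$. The invariant Harnack inequality for sums of squares with bounded potential then gives
\[
\sup_{B_R(e)}u_j\le C_H\inf_{B_R(e)}u_j,
\]
with $C_H=C_H(R,C_0,n)$. Since $u_j(e)=1$, this yields $u_j\ge C_H^{-1}>0$ on $B_R(e)$ uniformly in $j$. With the lower bound in hand, the nonlinearity $s\mapsto s^p$ is smooth on $[C_H^{-1},C_0]$. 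Iterating the Schauder estimates in the $\Gamma^{k,\alpha}$ scale then gives uniform bounds on all horizontal derivatives on $B_{R/2}(e)$.

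\textbf{Step 4: extraction and passage to the limit.} Arzelà--Ascoli on each ball $B_{R/2}(e)$, followed by a diagonal subsequence over $R=1,2,\dots$, gives convergence in $C^\infty_{H,\loc}$. The limit satisfies $Lu_\infty=n^2u_\infty^p$ by passing to the limit in the equation. It also satisfies $u_\infty(e)=1$ and $u_\infty\le C_0$ on all of $\HH^n$. Finally, $u_\infty\ge C_H(R)^{-1}>0$ on every ball $B_R(e)$, so $u_\infty$ is positive everywhere; alternatively, positivity follows from the strong maximum principle for $L$ applied to $u_\infty\ge0$ with $u_\infty(e)=1$.

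\textbf{Main obstacle.} The only nonroutine point is the uniform positive lower bound in Step 3. It is needed because the nonlinearity $s^p$ is not smooth at $0$ for non-integer $p$, so without it the higher-order bootstrap breaks down. Normalizing with $u_j(e)=1$ and applying Harnack supplies this bound. The remaining steps are routine subelliptic theory.
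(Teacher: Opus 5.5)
Your proposal is correct and follows the paper's proof: Harnack with the bounded potential $n^2u_j^{p-1}$ and the normalization $u_j(e)=1$ give a uniform positive lower bound on each fixed ball, interior subelliptic estimates with a bootstrap give uniform $C_H^{m,\alpha}$ bounds, and a diagonal Arzel\`a--Ascoli argument concludes. One small inaccuracy does not affect the argument: $p=1+2/n$ is an integer when $n=2$ (and when $n=1$), so your stated reason for needing the lower bound does not apply in every dimension, but obtaining the bound is harmless in all cases.
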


\begin{proof}
For each fixed $R$, Harnack gives a positive lower bound for $u_j$ on $B_R$ once $j$ is large.  Interior subelliptic estimates and a bootstrap in \eqref{eq:main-equation} give uniform $C_H^{m,\alpha}(B_R)$ bounds for every $m$.  A diagonal Arzel\`a--Ascoli argument proves the assertion.
\end{proof}

\section{Exact Riesz representation and the critical Morrey bound}\label{sec:riesz-morrey}

This section supplies the global information available for every positive entire solution.  No decay or integrability assumption is used.

\subsection{Elimination of the harmonic remainder}

\begin{theorem}[Exact Green--Riesz representation]\label{thm:exact-riesz}
Let $u>0$ be a smooth entire solution of \eqref{eq:main-equation}.  Then
\begin{equation}\label{eq:exact-riesz}
 u(\xi)=n^2\int_{\HH^n}\GammaH(\eta^{-1}\xi)u(\eta)^p\dy<\infty
 \qquad(\xi\in\HH^n).
\end{equation}
\end{theorem}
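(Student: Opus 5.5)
The plan is to prove the representation on balls first and then remove the boundary contribution by a monotone limit. Fix $\xi$ and $R>0$, and let $G_R(\xi,\eta)$ be the Dirichlet Green function of $L$ on $B_R(e)$. It is positive and increases to $\GammaH(\eta^{-1}\xi)$ as $R\to\infty$, by the maximum principle applied to $\GammaH-G_R$ together with $\GammaH\to0$ at infinity. Since the Kor\'anyi ball may not be regular for every boundary point, I would avoid Green-function technicalities altogether. Instead I would use the truncated potentials
\[
 v_R(\xi)=n^2\int_{B_R(e)}\GammaH(\eta^{-1}\xi)u(\eta)^p\dy .
\]
Each $v_R$ is smooth and satisfies $Lv_R=n^2u^p$ on $B_R(e)$. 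It is $L$-superharmonic on $\HH^n$ and vanishes at infinity.

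\textbf{Step 1 (the potential is finite).} I will show $v_R\le u$ for every $R$. For fixed $R$, put $h=u-v_R$. Then $Lh=n^2u^p\mathbf 1_{\HH^n\setminus B_R}\ge0$ in the distributional sense, so $h$ is $L$-superharmonic on all of $\HH^n$. Moreover $\liminf_{\rho(\xi)\to\infty}h\ge0$, because $u>0$ and $v_R\to0$ at infinity. The minimum principle for superharmonic functions on the exhaustion $B_{R'}(e)$, $R'\to\infty$, then gives $h\ge0$. Letting $R\to\infty$ by monotone convergence, the full potential
\[
 v(\xi)=n^2\int_{\HH^n}\GammaH(\eta^{-1}\xi)u^p\dy
\]
satisfies $v\le u<\infty$ everywhere. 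In particular $\int u^p(1+\rho)^{2-Q}<\infty$, and $v$ is a genuine solution of $Lv=n^2u^p$ by the standard local theory of Newtonian potentials; the subelliptic inputs are collected in \cref{app:analytic-inputs}.

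\textbf{Step 2 (the remainder is a constant).} The function $h=u-v$ is smooth, nonnegative, and satisfies $Lh=0$ on $\HH^n$. By the Liouville theorem for nonnegative $\Delta_H$-harmonic functions on stratified groups, $h\equiv c\ge0$. This follows from the scale-invariant Harnack inequality on $B_R$ with $R\to\infty$ combined with gradient estimates; Bony's Harnack inequality or the Bonfiglioli--Lanconelli--Uguzzoni theory would supply it.

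\textbf{Step 3 (the constant vanishes).} This is the main point, though it is short. Suppose $c>0$. Then $u\ge c$ everywhere, so $u^p\ge c^p$ and
\[
 v(\xi)\ge n^2c^p\int_{\HH^n}\GammaH(\eta^{-1}\xi)\dy=\infty,
\]
since $\int\rho^{2-Q}$ diverges at infinity: the integral over the shell $\{\rho\sim 2^k\}$ is of order $2^{2k}$. This contradicts $v\le u<\infty$ from Step 1, so $c=0$ and \eqref{eq:exact-riesz} follows.

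I expect the only delicate part to be Step 1, specifically the justification of the comparison $v_R\le u$ without any decay assumption on $u$. The argument uses only $u>0$ and the decay of the compactly supported potential $v_R$. This is exactly why I would work with truncated potentials rather than with Green functions of balls.
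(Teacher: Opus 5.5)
Your proof is correct, and your Steps 2 and 3 are the same as the paper's: Liouville for the nonnegative $L$-harmonic remainder, then divergence of $\int\GammaH$ to exclude a positive constant.

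The difference is in how you obtain $v\le u$. The paper uses the Friedrichs Dirichlet Green kernels $G_R$ of an exhaustion $\Omega_R$. It proves $n^2\int_{\Omega_R}G_Ru^p\le u$ by testing $L(v_R-u)=0$ against $(v_R-u)_+\in S^1_{H,0}(\Omega_R)$. It then needs a separate potential-theoretic input, $G_R\uparrow\GammaH$, obtained from monotonicity of killed heat kernels and transience for $Q>2$.

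You instead keep the full-space kernel and truncate the source. Then $u-v_R$ is a continuous distributional supersolution on all of $\HH^n$. Since $v_R=O(\rho^{2-Q})$ while $u>0$, the function $u-v_R+\varepsilon$ is positive on $\partial B_{R'}(e)$ for $R'$ large. The weak minimum principle then gives $u-v_R\ge-\varepsilon$ on $B_{R'}(e)$, and no boundary regularity is needed, because you test with $(u-v_R+\varepsilon)_-$, which vanishes near $\partial B_{R'}(e)$. Monotone convergence in $R$ finishes.

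Your route is more elementary. It uses only the decay of $\GammaH$ and the minimum principle, and never touches convergence of Dirichlet Green kernels or characteristic boundary points. The paper's route stays inside the Dirichlet-form framework and never needs continuity or decay of a truncated potential.

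One small correction: $v_R$ is not smooth across $\partial B_R(e)$, because the density $u^p\mathbf 1_{B_R(e)}$ jumps there. It is smooth off $\partial B_R(e)$ and globally continuous (indeed in $S_H^{2,q}$ on every ball for $q<\infty$), which is all the minimum principle requires.
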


\begin{proof}
Choose an increasing exhaustion $\Omega_R\nearrow\HH^n$ by bounded regular domains, and let $G_R$ be the positive Dirichlet Green kernel for the Friedrichs realization of $L$ on $\Omega_R$.  Put
\[
 v_R(\xi)=n^2\int_{\Omega_R}G_R(\xi,\eta)u(\eta)^p\dy.
\]
Weak comparison gives $0\le v_R\le u$ in $\Omega_R$.  Indeed, $v_R$ has zero trace and $u$ is strictly positive up to the compact boundary, so $(v_R-u)_+\in S^1_{H,0}(\Omega_R)$.  Testing $L(v_R-u)=0$ with this function gives
\[
 \int_{\Omega_R}|\nabla_H(v_R-u)_+|^2=0.
\]
Domain monotonicity gives $v_R\le v_S$ when $R<S$.

The killed heat kernels increase to the full heat kernel, and transience in homogeneous dimension $Q>2$ yields
\[
 G_R(\xi,\eta)\uparrow\GammaH(\eta^{-1}\xi)
 \qquad(\xi\ne\eta).
\]
Consequently, monotone convergence gives a finite potential
\[
 v(\xi)=n^2\int_{\HH^n}\GammaH(\eta^{-1}\xi)u(\eta)^p\dy
 \le u(\xi).
\]
Since $v\in L^1_{\loc}$ and $v\le u$, Fubini's theorem is legitimate against compactly supported test functions, and $Lv=n^2u^p$ in distributions.  Hence
\[
 h=u-v\ge0,
 \qquad Lh=0\quad\text{in }\HH^n.
\]
Hypoellipticity makes $h$ smooth.  The Liouville theorem for nonnegative entire $L$-harmonic functions implies that $h\equiv c$ for some $c\ge0$; see \cite{BonfiglioliLanconelli2001}.  If $c>0$, then $u^p\ge c^p$ and therefore
\[
 v(\xi)
 \ge n^2c^p\int_{\HH^n}\GammaH(\eta^{-1}\xi)\dy
 \ge c'\int_1^\infty r^{2-Q}r^{Q-1}\,dr
 =\infty,
\]
contrary to $v\le u<\infty$.  Thus $c=0$ and \eqref{eq:exact-riesz} follows.
\end{proof}

\begin{remark}\label{rem:exhaustion}
The exhaustion argument is phrased through the Friedrichs Green kernels so that no boundary regularity at characteristic points of a Kor\'anyi sphere is needed.  Equivalently, one may use killed heat kernels throughout.
\end{remark}

\subsection{All-centre source control}

\begin{proposition}[Critical Morrey estimate]\label{prop:critical-morrey}
There is a dimensional constant $C_M$ such that every positive entire solution of \eqref{eq:main-equation} satisfies
\begin{equation}\label{eq:critical-morrey}
 \int_{B_R(a)}u^p\le C_MR^n
 \qquad(a\in\HH^n,\ R>0).
\end{equation}
\end{proposition}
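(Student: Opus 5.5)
The plan is to combine the exact representation of \cref{thm:exact-riesz} with a test-function argument that uses the equation itself. By the invariance of the problem under $\cT_{a,\mu}$ (which preserves both the equation and the scale-invariant quantity $R^{-n}\int_{B_R(a)}u^p$), it suffices to prove the estimate for $a=e$ and $R=1$, with a dimensional constant. Indeed, if $v=\cT_{a,R}u$, then
\[
 \int_{B_1(e)}v^p=R^{np}R^{-Q}\int_{B_R(a)}u^p=R^{-n}\int_{B_R(a)}u^p,
\]
since $np-Q=n+2-2n-2=-n$.

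For the unit-scale bound, the first step is to evaluate \eqref{eq:exact-riesz} at a point $\xi\in B_1(e)$. Using \eqref{eq:green-kernel-estimate}, $\GammaH(\eta^{-1}\xi)\ge c\,2^{2-Q}$ for $\eta\in B_1(e)$. This gives the lower bound
\[
 u(\xi)\ge c_0\int_{B_1(e)}u^p=:c_0 m \qquad(\xi\in B_1(e)).
\]
So $u$ is bounded below on the unit ball by a multiple of its own source mass.

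The second step feeds this lower bound back through the equation to produce a superlinear inequality for $m$ that forces $m$ to be bounded. Fix a cutoff $\varphi\in C_c^\infty(B_1(e))$ with $0\le\varphi\le1$ and $\varphi=1$ on $B_{1/2}(e)$. Rather than the plain weight $\varphi$, I will test the equation against $u^{-\theta}\varphi^k$ for a small $\theta\in(0,1)$ and large $k$. Integrating by parts gives
\[
 n^2\int u^{p-\theta}\varphi^k+\theta\int u^{-\theta-1}|\nabla_Hu|^2\varphi^k
 =k\int u^{-\theta}\varphi^{k-1}\nabla_Hu\cdot\nabla_H\varphi .
\]
Young's inequality absorbs the gradient term on the right into the left, and leaves
\[
 \int u^{p-\theta}\varphi^k\le C\int u^{1-\theta}\varphi^{k-2}|\nabla_H\varphi|^2 .
\]
Hölder then bounds the right side by $C\bigl(\int u^{p-\theta}\varphi^k\bigr)^{(1-\theta)/(p-\theta)}$. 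Since $p>1$, this yields a dimensional bound $\int_{B_{1/2}}u^{p-\theta}\le C$. This is the classical Serrin-type a priori integral estimate for $-\Delta u=u^p$, and it transfers to $L=-\Delta_H$ unchanged, because only horizontal integration by parts is used.

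Combining the two steps, the lower bound on $B_{1/2}(e)$ (obtained as before, with the ball $B_{1/2}$) gives $(c_0 m')^{p-\theta}|B_{1/2}|\le C$, where $m'=\int_{B_{1/2}}u^p$. Hence $m'\le C$. Covering $B_1(e)$ by boundedly many translated half-balls, or rescaling once more, gives $m\le C_M$. One must take care that $\theta$ stays in $(0,1)$, where the gradient term has a good sign, and that the constants remain dimensional.

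The main obstacle I anticipate is making the test-function step rigorous with only a $C^2$ positive solution and no integrability assumption. This is harmless here: the cutoff $\varphi$ has compact support and $u>0$ is smooth, so every integral involved is finite. If the absorption turns out to be awkward, I would instead use the representation twice. Comparing the values of $u$ at different points of $B_1(e)$ via the kernel bound shows that $u$ is comparable to $m$ on $B_{1/2}$, and then $\int_{B_{1/2}}u^p\gtrsim m^p$ is forced against $m$ for large $m$, which again bounds $m$.
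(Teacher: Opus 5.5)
Your argument is correct, but it is more roundabout than the paper's. Each of your two steps already suffices on its own.

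\textbf{Step 1 alone.} The paper's proof is exactly your Step 1 followed by one line. Since $u\ge c_0m$ on all of $B_1(e)$, one has $m=\int_{B_1(e)}u^p\ge|B_1|(c_0m)^p$, and $p>1$ bounds $m$. The paper does this directly at scale $R$, obtaining $M\ge c_1R^{-2}M^p$; this is the same computation as your rescaling. So once the exact representation is available, the Serrin step, the passage to half-balls and the covering are unnecessary. Your fallback in the last paragraph is essentially this argument, and only the lower bound (not two-sided comparability) is needed there.

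\textbf{Step 2 alone.} Conversely, your Step 2 needs neither $\theta>0$ nor Step 1. Test the equation with $\varphi^k$ and move both derivatives onto the cutoff. Since $|L(\varphi^k)|\le C\varphi^{k-2}$, you get $n^2\int u^p\varphi^k=\int u\,L(\varphi^k)\le C\int u\,\varphi^{k-2}\le C\bigl(\int u^p\varphi^k\bigr)^{1/p}$ as soon as $k\ge 2p/(p-1)=n+2$. Hence $\int_{B_{1/2}}u^p\le C$, and scaling gives the exponent $Q-2p/(p-1)=n$.

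\textbf{What each route buys.} The paper's route is two lines and uses only positivity of the kernel. It rests on \cref{thm:exact-riesz}, with its exhaustion and Liouville argument, which the paper needs anyway for reciprocity and the spherical Green equation. The test-function (nonlinear-capacity) route is purely local and needs no global representation. It applies verbatim to positive supersolutions $Lu\ge n^2u^p$, so it shows the critical Morrey bound holds independently of the Green representation. Combining both, as you do, is valid but redundant.
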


\begin{proof}
Fix $a$ and $R$, and write
\[
 M=\int_{B_R(a)}u^p.
\]
This number is finite because $u$ is smooth on the compact closure of
$B_R(a)$; it also follows directly by restricting the finite potential
in \eqref{eq:exact-riesz} to the ball.
If $\xi,\eta\in B_R(a)$, then $d(\xi,\eta)\le2R$.  The positivity of the exact representation and \eqref{eq:green-kernel-estimate} give
\[
 u(\xi)\ge cR^{2-Q}M
 \qquad(\xi\in B_R(a)).
\]
It follows from \eqref{eq:volume-growth} that
\[
 M
 \ge |B_R|\bigl(cR^{2-Q}M\bigr)^p
 \ge c_1R^{Q+p(2-Q)}M^p
 =c_1R^{-2}M^p.
\]
Since $p-1=2/n$, this is equivalent to $M\le C_MR^n$.
\end{proof}

The Morrey estimate and the homogeneity of $\GammaH$ give a uniform weighted tail.

\begin{corollary}[Dyadic Green tail]\label{cor:green-tail}
For every $a\in\HH^n$ and $R\ge1$,
\begin{equation}\label{eq:green-tail}
 \int_{\HH^n\setminus B_R(a)}
 d(a,\eta)^{2-Q}u(\eta)^p\dy
 \le CR^{-n}.
\end{equation}
The constant is dimensional and is preserved under the rescalings \eqref{eq:rescaling}.
\end{corollary}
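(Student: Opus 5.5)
Decompose the exterior region $\HH^n\setminus B_R(a)$ into the dyadic annuli
\[
 A_k=B_{2^{k+1}R}(a)\setminus B_{2^kR}(a),\qquad k\ge0,
\]
and estimate each annulus with the Morrey bound of \cref{prop:critical-morrey}. On $A_k$ we have $d(a,\eta)\ge 2^kR$. Since $2-Q<0$, this gives $d(a,\eta)^{2-Q}\le(2^kR)^{2-Q}$. Because $A_k\subset B_{2^{k+1}R}(a)$, \eqref{eq:critical-morrey} yields
\[
 \int_{A_k}d(a,\eta)^{2-Q}u^p\dy
 \le (2^kR)^{2-Q}C_M(2^{k+1}R)^n
 =2^nC_M\,(2^kR)^{2-Q+n}.
\]
Now $2-Q+n=2-(2n+2)+n=-n$, so the $k$-th term is $2^nC_M(2^kR)^{-n}$.

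Summing the geometric series over $k\ge0$ gives
\[
 \int_{\HH^n\setminus B_R(a)}d(a,\eta)^{2-Q}u^p\dy
 \le 2^nC_M\frac{1}{1-2^{-n}}R^{-n}.
\]
Tonelli's theorem justifies exchanging the sum and the integral, since the integrand is nonnegative. The constant depends only on $n$ and $C_M$, so it is dimensional. The bound in fact holds for every $R>0$; the restriction $R\ge1$ is not needed.

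For the invariance claim, note that by \cref{prop:critical-morrey} every $\cT_{a,\mu}u$ is again a positive entire solution with the same dimensional Morrey constant. Applying the argument above to $\cT_{a,\mu}u$ therefore gives the same constant. One can also check this directly by changing variables: the weight $d^{2-Q}$, the density $u^p$ (which scales like $\mu^{np}=\mu^{n+2}$) and Haar measure (which scales like $\mu^{Q}$) combine to reproduce exactly the factor $R^{-n}$.

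There is no real obstacle here. The only point requiring care is the exponent bookkeeping $2-Q+n=-n<0$, which is what makes the dyadic series converge.
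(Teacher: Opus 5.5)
Your proposal is correct and follows the paper's proof: decompose $\HH^n\setminus B_R(a)$ into dyadic annuli, apply \cref{prop:critical-morrey} on each, use $2-Q+n=-n$, and sum the geometric series. Your observations that the bound holds for every $R>0$ and that the constant is preserved under $\cT_{a,\mu}$, whether via the dimensional Morrey constant or by a direct change of variables, are accurate.
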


\begin{proof}
On $B_{2^{k+1}R}(a)\setminus B_{2^kR}(a)$, \cref{prop:critical-morrey} gives
\[
 \int d(a,\eta)^{2-Q}u^p\dy
 \le C(2^kR)^{2-Q}(2^{k+1}R)^n
 \le C(2^kR)^{-n}.
\]
Sum over $k\ge0$.
\end{proof}

\begin{remark}\label{rem:morrey-not-pointwise}
The exponent in \eqref{eq:critical-morrey} is critical.  A bubble of width $\varepsilon$ has height comparable to $\varepsilon^{-n}$ while its $u^p$-mass on a fixed ball is of order $\varepsilon^n$.  Thus the Morrey estimate does not imply a pointwise upper bound or a small-mass regularity statement.
\end{remark}

\subsection{Affine limits retain the global equation}

The tail estimate is exactly what prevents a harmonic defect from appearing when an affine sequence is passed to a local limit.

\begin{proposition}[Exact-Riesz compactness]\label{prop:riesz-compactness}
Let $(u_j)$ be a uniform exact-Riesz/Morrey family, and assume
\[
 u_j\longrightarrow u_\infty
 \quad\text{in }C^\infty_{H,\loc}(\HH^n),
 \qquad u_\infty>0.
\]
Then $u_\infty$ satisfies the same exact representation and the same all-centre Morrey bound.
\end{proposition}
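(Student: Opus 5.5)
The Morrey bound passes to the limit by Fatou's lemma; the exact representation is proved by splitting the Green integral into a near region and a far tail.

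\emph{Morrey bound.} Since $u_j\to u_\infty$ locally uniformly and every ball $B_R(a)$ has compact closure, Fatou's lemma gives
\[
\int_{B_R(a)}u_\infty^p\le\liminf_j\int_{B_R(a)}u_j^p\le C_MR^n
\]
for every $a$ and $R$, with the same constant $C_M$.

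\emph{Exact representation.} Fix $\xi$ and $R\ge1$ with $R\ge 2\rho(\xi)$. Write the identity \eqref{eq:uniform-riesz} for $u_j$ at $\xi$ and split the integral into the part over $B_R(e)$ and the part over its complement.

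\emph{Near part.} On $B_R(e)$ the functions $u_j^p$ converge uniformly to $u_\infty^p$ and are uniformly bounded. The kernel $\eta\mapsto\GammaH(\eta^{-1}\xi)$ is integrable on this ball, since it behaves like $d(\xi,\eta)^{2-Q}$, which is locally integrable in homogeneous dimension $Q$. Dominated convergence therefore sends the near integral for $u_j$ to the near integral for $u_\infty$.

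\emph{Tail.} For $\eta\notin B_R(e)$ the triangle inequality for the Korányi metric gives $d(\xi,\eta)\ge d(e,\eta)-\rho(\xi)\ge\tfrac12d(e,\eta)$. By \eqref{eq:green-kernel-estimate}, the far part of the integral for $u_j$ is therefore bounded by
\[
C\int_{\HH^n\setminus B_R(e)}d(e,\eta)^{2-Q}u_j^p\,d\eta .
\]
The dyadic Morrey summation in the proof of Corollary~\ref{cor:green-tail} uses only the uniform Morrey bound \eqref{eq:uniform-morrey}, so it bounds this quantity by $CR^{-n}$ uniformly in $j$. The same bound holds for $u_\infty$, because its Morrey bound has already been established.

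\emph{Conclusion.} Letting $j\to\infty$ in the identity for $u_j$ gives
\[
\Bigl|u_\infty(\xi)-n^2\int_{\HH^n}\GammaH(\eta^{-1}\xi)u_\infty^p\,d\eta\Bigr|\le CR^{-n}.
\]
Letting $R\to\infty$ gives the exact representation, and its finiteness follows as well.

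The main point is the uniformity of the tail bound. The Morrey constant must not depend on $j$, and Definition~\ref{def:riesz-morrey-class} assumes exactly this. The near part needs no control at infinity at all.
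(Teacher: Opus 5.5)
Your proposal is correct and follows the paper's proof essentially step for step. Both pass the Morrey bound to the limit by Fatou and split the Green integral at $B_R(e)$ with $R\ge2\rho(\xi)$. Both pass to the limit in the near part by local convergence and local integrability of the kernel, and bound the far part by $CR^{-n}$ uniformly in $j$ via the dyadic tail estimate, letting $j\to\infty$ and then $R\to\infty$. Your separate tail bound for $u_\infty$ is harmless but not needed, since monotone convergence in $R$ already gives the full integral.
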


\begin{proof}
The Morrey estimate passes to the limit by Fatou.  Fix $\xi\in\HH^n$ and $R>2d(e,\xi)$.  On $B_R(e)$, local convergence and the local integrability of the Green singularity allow passage to the limit in
\[
 \int_{B_R(e)}\GammaH(\eta^{-1}\xi)u_j(\eta)^p\dy.
\]
For $\eta\notin B_R(e)$, the distances $d(\xi,\eta)$ and $d(e,\eta)$ are comparable.  The uniform version of \eqref{eq:green-tail} bounds the complementary integral by $CR^{-n}$, independently of $j$.  First let $j\to\infty$ and then $R\to\infty$ in \eqref{eq:uniform-riesz}.  This gives the exact representation for $u_\infty$.
\end{proof}

\subsection{Green reciprocity with a bubble}

\begin{lemma}[Exact reciprocity]\label{lem:reciprocity}
Let $W$ be a positive exact-Riesz/Morrey solution and let $U\in\cM_{\rm bub}$.  Then
\begin{equation}\label{eq:reciprocity}
 \int_{\HH^n}U^pW\dx
 =\int_{\HH^n}UW^p\dx
 <\infty.
\end{equation}
More generally, the same identity holds between $W$ and every bubble obtained from $U$ by a sufficiently small conformal deformation on the CR sphere.
\end{lemma}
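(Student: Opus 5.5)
We substitute the two exact representations into each other and apply Tonelli's theorem. Since $U\in\cM_{\rm bub}$ is itself a positive entire solution, \cref{thm:exact-riesz} gives $U=n^2\int\GammaH(\eta^{-1}\xi)U(\eta)^p\dy$. The same representation holds for $W$ by hypothesis. Multiplying the representation of $W$ by $U^p(\xi)$ and integrating in $\xi$, we obtain
\[
 \int U^pW\dx
 =n^2\iint U(\xi)^p\,\GammaH(\eta^{-1}\xi)\,W(\eta)^p\dy\dx .
\]
All integrands are nonnegative, so Tonelli's theorem permits exchanging the order of integration without any a priori finiteness. The symmetry $\GammaH(\eta^{-1}\xi)=\GammaH(\xi^{-1}\eta)$ holds because $\rho(\zeta^{-1})=\rho(\zeta)$. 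Using it and the representation of $U$, the inner $\xi$-integral becomes $n^{-2}U(\eta)$. Hence both sides of \eqref{eq:reciprocity} equal the same double integral in $[0,\infty]$.

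The only real content is finiteness, which we prove on the left side. The bubble satisfies $U^p\le C(1+\rho)^{-np}=C(1+\rho)^{-(n+2)}$. Split $\HH^n$ into $B_1(e)$ and the dyadic annuli $A_k=B_{2^{k+1}}\setminus B_{2^k}$. On $B_1$, $W$ is smooth, so that piece is finite. On $A_k$ we need control of $\int_{A_k}W$, but the Morrey bound \eqref{eq:critical-morrey} controls only $W^p$. Since $p>1$, Hölder's inequality gives
\[
 \int_{A_k}W\le |A_k|^{1-1/p}\Bigl(\int_{A_k}W^p\Bigr)^{1/p}
 \le C\,2^{kQ(1-1/p)}2^{kn/p}.
\]
With $1/p=n/(n+2)$, the exponent is $Q\cdot\tfrac{2}{n+2}+\tfrac{n^2}{n+2}=\tfrac{n^2+4n+4}{n+2}=n+2$. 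Multiplying by the weight $2^{-k(n+2)}$ therefore gives an $O(1)$ contribution per annulus, and the crude dyadic sum diverges logarithmically. This borderline is the main obstacle.

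To beat it, we would instead bound the right side using the representation of $U$. Since $U(\eta)\asymp\int\GammaH U^p$, and $\int\GammaH(\eta^{-1}\xi)W(\eta)^p\dy=n^{-2}W(\xi)<\infty$, we get $\int UW^p=n^2\int U^p(\xi)\bigl(\int\GammaH W^p\bigr)\,d\xi$. This is circular, so a pointwise input is needed. We take it from \cref{cor:green-tail}: at $a=e$,
\[
 W(e)\ge c\int_{\HH^n\setminus B_1}\rho(\eta)^{2-Q}W^p\dy ,
\]
and in fact the tail is $\le CR^{-n}$ beyond $B_R$. Since $U\asymp(1+\rho)^{2-Q}$ and $Q-2=2n$, we have
\[
 \int UW^p\le C\int_{B_1}W^p+C\int_{\HH^n\setminus B_1}\rho^{2-Q}W^p\dy
 \le C+C\,W(e)<\infty .
\]
The left side then equals this finite number by the Tonelli identity. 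The Hölder detour is thus unnecessary: finiteness comes directly from evaluating $W$'s representation at a point, and the decay $U\asymp\rho^{2-Q}$ matches the Green kernel exactly.

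For the general statement we argue by covariance. A small conformal deformation on the CR sphere moves $U$ to another positive bubble $\tilde U$, which has the same two-sided profile $\tilde U\asymp(1+d(\tilde a,\cdot))^{2-Q}$ with a new centre $\tilde a$. The argument above, with the Green tail centred at $\tilde a$ and finiteness of $W(\tilde a)$, applies verbatim. We note that bubbles whose pole on the sphere does not lie at the Cayley point at infinity are still Jerison--Lee bubbles $U_{a,\lambda}$ in the chart. Only the constants depend on the deformation, and they stay locally uniform for deformations near the identity.
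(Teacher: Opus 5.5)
Your argument is correct and is essentially the paper's. Tonelli, the symmetry of $\GammaH$ and the two exact representations give equality in $[0,\infty]$, and finiteness follows by dominating $U\lesssim(1+\rho)^{2-Q}$ by a multiple of the Green kernel and evaluating $W$'s representation at one point; the paper writes this domination as $U\le C\GammaH(\xi_0^{-1}\cdot)$, and treats nearby bubbles the same way.

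One correction concerns your discarded detour. There $U^p\asymp(1+\rho)^{-2np}=(1+\rho)^{-(Q+2)}$, not $(1+\rho)^{-(n+2)}$, so the H\"older--Morrey dyadic sum converges like $\sum_k 2^{-k(n+2)}$. There is therefore no logarithmic borderline, and the left side is directly finite as well; this is exactly the bound the paper uses in \cref{lem:global-L1}.
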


\begin{proof}
Fix $\xi_0\in\HH^n$.  Since $U$ is bounded near $\xi_0$, decays like $\rho^{2-Q}$ at infinity, and $\GammaH(\xi_0^{-1}\cdot)$ has the same decay while blowing up at $\xi_0$, one has
\[
 U(\eta)\le C\GammaH(\xi_0^{-1}\eta)
 \qquad(\eta\in\HH^n).
\]
The exact representation of $W$ at $\xi_0$ gives
\[
 \int UW^p\le C\int\GammaH(\xi_0^{-1}\eta)W(\eta)^p\dy<\infty.
\]
Apply the exact representation first to $W$ and then to $U$.  Positivity, Tonelli's theorem, and the symmetry
$\GammaH(\eta^{-1}\xi)=\GammaH(\xi^{-1}\eta)$ yield
\begin{align*}
 \int U^pW
 &=n^2\iint U(\xi)^p\GammaH(\eta^{-1}\xi)W(\eta)^p\,d\eta\,d\xi\\
 &=\int UW^p.
\end{align*}
The same proof applies to a nearby bubble.  On a compact parameter interval, the bubble, its parameter derivative, its $p$th power, and the derivative of that power satisfy the same uniform Green-kernel domination.  This is the $C^1$ domination used when reciprocity is differentiated in \cref{lem:nonlinear-barycenter}.
\end{proof}

\begin{corollary}[Convex deficit]\label{cor:convex-deficit}
Let $W$ be a positive exact-Riesz/Morrey solution and let
$U\in\cM_{\rm bub}$.  Set
$\cR_p(s)=(1+s)^p-1-ps$ for $s>-1$.  Then
\begin{align}
 \mathfrak D_U(W)
 &:=\int_{\HH^n}U^{p+1}\dx-\int_{\HH^n}U^pW\dx\notag\\
 &=\frac1{p-1}\int_{\HH^n}U^{p+1}
       \cR_p\!\left(\frac WU-1\right)\dx\ge0.
 \label{eq:convex-deficit}
\end{align}
Equality holds if and only if $W=U$.
\end{corollary}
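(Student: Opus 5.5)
We derive the identity \eqref{eq:convex-deficit} directly from the reciprocity relation \eqref{eq:reciprocity} of \cref{lem:reciprocity}, and then read off nonnegativity and the equality case from strict convexity of $\cR_p$. Write $F=W/U$, so that $W=UF$ and $W^p=U^pF^p$. Every integral below is finite. The quantities $\int U^pW$ and $\int UW^p$ are finite by \cref{lem:reciprocity}, and $\int U^{p+1}<\infty$ because $U\asymp\rho^{2-Q}$ at infinity and $(p+1)(Q-2)=2Q>Q$. In terms of $F$, reciprocity reads
\[
 \int U^{p+1}F=\int U^{p+1}F^p .
\]

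Next expand the convex remainder pointwise. With $s=F-1>-1$ we have $\cR_p(F-1)=F^p-1-p(F-1)=F^p-pF+(p-1)$. Multiply by $U^{p+1}$ and integrate; each of the three terms is separately integrable, so linearity is legitimate. This gives
\[
 \int U^{p+1}\cR_p(F-1)=\int U^{p+1}F^p-p\int U^{p+1}F+(p-1)\int U^{p+1}.
\]
Reciprocity replaces $\int U^{p+1}F^p$ by $\int U^{p+1}F=\int U^pW$. The right-hand side therefore collapses to $(p-1)\bigl(\int U^{p+1}-\int U^pW\bigr)=(p-1)\mathfrak D_U(W)$. Dividing by $p-1=2/n>0$ yields \eqref{eq:convex-deficit}.

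For nonnegativity, note that $\cR_p$ is the remainder of the tangent-line approximation to the strictly convex function $s\mapsto(1+s)^p$ at $s=0$, since $p>1$. Hence $\cR_p(s)\ge0$ on $(-1,\infty)$, with equality only at $s=0$. The integrand $U^{p+1}\cR_p(F-1)$ is therefore nonnegative, and so $\mathfrak D_U(W)\ge0$.

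For the equality case, suppose $\mathfrak D_U(W)=0$. The integrand is continuous, nonnegative, and has zero integral, and $U>0$, so $\cR_p(F-1)\equiv0$. This forces $F\equiv1$, that is, $W=U$. Conversely, if $W=U$ the deficit vanishes trivially.

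The only delicate point is integrability, which justifies splitting the integral of $U^{p+1}\cR_p(F-1)$ into three pieces. This is supplied exactly by the finiteness statement in \cref{lem:reciprocity} together with the decay of the bubble, so I expect no real obstacle here.
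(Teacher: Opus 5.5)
Your proof is correct and is essentially the paper's argument. Like the paper, you expand $U^{p+1}\cR_p(W/U-1)=UW^p+(p-1)U^{p+1}-pU^pW$, use \cref{lem:reciprocity} to replace $\int UW^p$ by $\int U^pW$, and get nonnegativity and the equality case from strict convexity of $\cR_p$; your check that each of the three terms is separately integrable, using reciprocity and the decay of $U^{p+1}$, is a detail the paper leaves implicit.
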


\begin{proof}
Expanding the last integrand gives
\[
 U^{p+1}\cR_p\!\left(\frac WU-1\right)
 =UW^p+(p-1)U^{p+1}-pU^pW.
\]
After integration, \cref{lem:reciprocity} identifies the first term
with $\int U^pW\dx$ and proves \eqref{eq:convex-deficit}.  Since
$p>1$, strict convexity gives $\cR_p(s)\ge0$, with equality only at
$s=0$; hence equality in \eqref{eq:convex-deficit} is equivalent to
$W=U$.
\end{proof}

\section{Sequential isolation of the bubble manifold}\label{sec:sphere-isolation}

Throughout this section $n\ge2$, and hence $1<p\le2$.  We prove \cref{thm:intro-isolation}.  By applying one fixed affine transformation, it is enough to treat a sequence converging locally to the basic bubble $U_0$.

\subsection{Cayley compactification and the missing point}

Let
\[
 N=(0,\ldots,0,-1)\in\SSph^{2n+1}\subset\CC^{n+1}.
\]
We call $N$ the \emph{point at infinity} for the chosen Cayley
transform, avoiding a north/south convention.
We use the Cayley transform
\begin{equation}\label{eq:cayley-transform}
 \cC(z,t)
 =\left(
 \frac{2z}{1+|z|^2+\ii t},
 \frac{1-|z|^2-\ii t}{1+|z|^2+\ii t}
 \right),
\end{equation}
which maps $\HH^n$ onto $\SSph^{2n+1}\setminus\{N\}$.  Normalize the spherical volume form by the exact identity
\begin{equation}\label{eq:cayley-volume}
 \cC^*(dV_S)=U_0^{p+1}\dx
\end{equation}
under \eqref{eq:cayley-transform}.  The conformal sub-Laplacian $\cL_S$ is normalized by
\begin{equation}\label{eq:conformal-covariance}
 L\bigl(U_0(F\circ\cC)\bigr)
 =U_0^p\bigl((\cL_SF)\circ\cC\bigr).
\end{equation}
In particular,
\begin{equation}\label{eq:kappa-sphere}
 \cL_S1=\kappa_S,
 \qquad \kappa_S=n^2.
\end{equation}

Given an entire solution $W$, define its relative conformal factor by
\begin{equation}\label{eq:relative-factor}
 F(\cC\xi)=\frac{W(\xi)}{U_0(\xi)}.
\end{equation}
Then
\begin{equation}\label{eq:sphere-equation-punctured}
 \cL_SF=\kappa_SF^p
 \qquad\text{on }\SSph^{2n+1}\setminus\{N\}.
\end{equation}

\begin{lemma}[Spherical Green equation]\label{lem:global-sphere-green}
If $W$ is an exact-Riesz/Morrey solution, then $F,F^p\in L^1(\SSph^{2n+1})$ and
\begin{equation}\label{eq:global-sphere-green}
 F(\zeta)
 =\kappa_S\int_{\SSph^{2n+1}}G_S(\zeta,\eta)F(\eta)^p\dV(\eta)
\end{equation}
for almost every $\zeta$.  Equivalently, \eqref{eq:sphere-equation-punctured} holds on the whole sphere in distributions, with no multiple of $\delta_N$ and no spherical harmonic remainder.
\end{lemma}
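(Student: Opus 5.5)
The plan is to transport the exact Riesz representation of \cref{thm:exact-riesz} (in the form assumed for an exact-Riesz/Morrey solution) to the sphere by the Cayley transform, using the conformal covariance of the Green kernel. Two facts about $\cC$ are needed. The first is the volume identity \eqref{eq:cayley-volume}. The second is the covariance of the Green function,
\[
 \GammaH(\eta^{-1}\xi)=U_0(\xi)\,U_0(\eta)\,G_S(\cC\xi,\cC\eta),
\]
where $G_S$ is the Green function of $\cL_S$. This covariance follows from \eqref{eq:conformal-covariance}: apply $L$ in $\xi$ to $U_0(\xi)G_S(\cC\xi,\cC\eta)$ and obtain a Dirac mass at $\eta$ with the correct weight $U_0^{p}(\eta)\cdot U_0^{p+1}(\eta)^{-1}\cdot U_0(\eta)^{\dots}$. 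Homogeneity fixes the exponents, and the constant is fixed by the local singularity. Uniqueness then comes from the Liouville theorem, since both sides decay at infinity in $\xi$ like $\rho^{2-Q}$. Equivalently, the Kor\'anyi distance satisfies $|1-\langle\cC\xi,\overline{\cC\eta}\rangle|$-type identities; I would simply record the formula in the form above, with $\kappa_S$ matching $n^2$.

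Granted this, the computation is direct. Divide \eqref{eq:exact-riesz} by $U_0(\xi)$ and write $W^p=U_0^pF^p$ and $U_0^{p+1}\dy=\cC^*dV_S$. This gives
\[
 F(\cC\xi)=n^2\int_{\HH^n}G_S(\cC\xi,\cC\eta)\,U_0(\eta)^{1+p}F(\cC\eta)^p\dy
 =\kappa_S\int_{\SSph^{2n+1}\setminus\{N\}}G_S(\cC\xi,\zeta)F(\zeta)^p\dV(\zeta).
\]
The point $N$ has measure zero, so this is \eqref{eq:global-sphere-green} for every $\zeta\ne N$, hence almost everywhere.

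For integrability, note that $\int_{\SSph}F^p\dV=\int_{\HH^n}U_0W^p\dx$, which is finite by \cref{lem:reciprocity}. Similarly $\int_{\SSph}F\dV=\int U_0^pW$, which is finite by the same lemma. The positive lower bound $G_S\ge c>0$ on the compact sphere also gives $\int F^p<\infty$ directly from \eqref{eq:global-sphere-green}, evaluated at one point.

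For the distributional statement, take $\varphi\in C^\infty(\SSph^{2n+1})$. Since $G_S\in L^1$ uniformly in one variable and $F^p\in L^1$, Fubini applies, and the self-adjointness of $\cL_S$ together with $\cL_S G_S(\cdot,\eta)=\delta_\eta$ gives $\int F\,\cL_S\varphi=\kappa_S\int F^p\varphi$. So no $\delta_N$ term and no kernel remainder appear. The latter point uses that $\cL_S$ is invertible, because $\kappa_S>0$ makes $G_S$ the genuine inverse rather than a pseudo-inverse.

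The main obstacle is the Green covariance identity, specifically verifying that the conformal factors are exactly $U_0(\xi)U_0(\eta)$ and that no additive term arises. I would first try the explicit route: compute $|1-\langle\cC\xi,\cC\eta\rangle|$ in terms of $\rho(\eta^{-1}\xi)$ and the weights $(1+|z|^2)^2+t^2$. This is the classical identity, and it makes $G_S=c|1-\langle\zeta,\bar\omega\rangle|^{-n}$ manifest.
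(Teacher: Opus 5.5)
Your proposal is correct and follows the paper's proof. It uses the same Green-kernel dictionary $\GammaH(\eta^{-1}\xi)=U_0(\xi)U_0(\eta)G_S(\cC\xi,\cC\eta)$ substituted into the exact Riesz formula, gets $F,F^p\in L^1$ from Green reciprocity with $U_0$, and obtains the distributional equation by Fubini against the $L^1$ density $F^p$, so that neither an atom at $N$ nor a kernel term can appear. Your derivation of the dictionary by Liouville uniqueness is a valid way to justify what the paper fixes by normalization, once the garbled weight is written out as $L_\xi\bigl(U_0(\xi)G_S(\cC\xi,\cC\eta)\bigr)=U_0(\eta)^{p}U_0(\eta)^{-(p+1)}\delta_\eta=U_0(\eta)^{-1}\delta_\eta$; with that computation, no separate homogeneity or singularity-matching step is needed.
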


\begin{proof}
The Green kernels are related by
\begin{equation}\label{eq:green-dictionary}
 G_S(\cC\xi,\cC\eta)
 =\frac{\GammaH(\eta^{-1}\xi)}{U_0(\xi)U_0(\eta)}.
\end{equation}
The coefficient is exactly one: it is fixed by the covariance relation \eqref{eq:conformal-covariance}, the measure normalization \eqref{eq:cayley-volume}, and $L\GammaH=\delta_e$.  Substituting \eqref{eq:relative-factor}, \eqref{eq:cayley-volume}, and \eqref{eq:green-dictionary} into the exact Riesz representation gives \eqref{eq:global-sphere-green} away from $N$.

It remains to justify the global integrability.  By \cref{lem:reciprocity},
\begin{equation}\label{eq:sphere-integrability}
 \int_{\SSph^{2n+1}}F^p\dV
 =\int_{\HH^n}U_0W^p\dx
 =\int_{\HH^n}U_0^pW\dx
 =\int_{\SSph^{2n+1}}F\dV<\infty.
\end{equation}
Thus the right-hand side of \eqref{eq:global-sphere-green} is the Green potential of an $L^1$ density on the compact sphere.  A point has zero volume, so this representation itself contains no atom at $N$.  Applying $\cL_S$ in distributions proves the last assertion.
\end{proof}

\subsection{The spherical kernel and a conformal slice}

Let $X_1,\ldots,X_Q$ be the real coordinate functions of
$\SSph^{2n+1}\subset\RR^Q$.

\begin{lemma}[First spherical eigenspace]\label{lem:sphere-kernel}
Let
\[
 \cA_S=\cL_S-p\kappa_S.
\]
Then
\begin{equation}\label{eq:sphere-kernel}
 \ker\cA_S
 =\Span_{\RR}\{X_1,\ldots,X_Q\}.
\end{equation}
Moreover, these functions are precisely the tangent directions at $1$ to the noncompact conformal orbit of the constant solution.
\end{lemma}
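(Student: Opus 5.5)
We will read off the kernel from the spectral decomposition of $\cL_S$ into bigraded spherical harmonics, and then match it with the infinitesimal conformal orbit by differentiating the bubble family. The plan has three steps.

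\emph{Step 1: the spectrum of $\cL_S$.} Decompose $L^2(\SSph^{2n+1})=\bigoplus_{j,k\ge0}\cH_{j,k}$. Here $\cH_{j,k}$ is the space of restrictions of harmonic polynomials on $\CC^{n+1}$ of bidegree $(j,k)$ in $(\zeta,\bar\zeta)$. Each $\cH_{j,k}$ is an eigenspace of the sublaplacian of the standard pseudohermitian structure, and by \cite{JerisonLee1988} or \cite{FrankLieb2012} the conformal sublaplacian acts on $\cH_{j,k}$ by the constant $c\,(j+\tfrac n2)(k+\tfrac n2)$. I will not recompute this from the Folland--Stein formula. Instead I fix $c$ by \eqref{eq:kappa-sphere}: the constants $\cH_{0,0}$ must have eigenvalue $n^2$, so $c=4$, and
\[
 \cL_S|_{\cH_{j,k}}=(2j+n)(2k+n).
\]
The only real risk is whether the normalization \eqref{eq:conformal-covariance} really produces a multiple of this operator. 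A consistency check in Step 3 will confirm it.

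\emph{Step 2: solving for the kernel.} We need $(2j+n)(2k+n)=p\kappa_S=n^2+2n=n(n+2)$.
\begin{itemize}
\item $(j,k)=(0,0)$ gives $n^2<n(n+2)$.
\item $(1,0)$ and $(0,1)$ give exactly $n(n+2)$.
\item Every other pair gives at least $\min\{(n+2)^2,\,n(n+4)\}>n(n+2)$, because the product is increasing in $j$ and in $k$.
\end{itemize}
Hence $\ker\cA_S=\cH_{1,0}\oplus\cH_{0,1}=\Span_\CC\{\zeta_i,\bar\zeta_i\}$. Its real part is the span of $\Rea\zeta_i$ and $\Ima\zeta_i$, which are exactly $X_1,\dots,X_Q$. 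This proves \eqref{eq:sphere-kernel}.

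\emph{Step 3: the tangent directions of the orbit.} Transport the orbit to the Cayley chart via \eqref{eq:relative-factor}. The orbit of the constant solution $1$ corresponds to $\{U_{a,\lambda}/U_0\}$. The map $(a,\lambda)\mapsto U_{a,\lambda}$ is smooth and consists of solutions, so differentiating $\cL_SF=\kappa_SF^p$ at $(e,1)$ shows that each derivative $\partial U_{a,\lambda}/U_0$ lies in $\ker\cA_S$. For these derivatives to be actual $L^2$ elements of the kernel across $N$, I use the $\rho^{2-Q}$ decay of bubbles to see that the quotients extend smoothly to the sphere.

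There are $Q=2n+1+1$ parameters. Their derivatives are linearly independent: the $\lambda$-derivative is even in $(z,t)\mapsto(-z,-t)$, while the $a$-derivatives are odd and independent by their leading behaviour near $e$. Since $\dim\ker\cA_S=Q$, the derivatives span the whole kernel.

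As the check on Step 1, I compute the $\lambda$-derivative directly. With $D=(1+|z|^2)^2+t^2$,
\[
 \frac{\partial_\lambda U_{e,\lambda}}{U_0}\Big|_{\lambda=1}
 =-n+\frac{2n(|z|^2+|z|^4+t^2)}{D}
 =\frac{n(|z|^4+t^2-1)}{D}.
\]
On the other hand, by \eqref{eq:cayley-transform},
\[
 -n\,\Rea\frac{1-|z|^2-\ii t}{1+|z|^2+\ii t}\circ\cC^{-1}\cdot\cC
 =-n\,\frac{1-|z|^4-t^2}{D},
\]
which is the same function. So the dilation direction is the last real coordinate, up to the factor $-n$, confirming that the constants in Step 1 are right. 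The left translations produce the remaining coordinates in the same way. The main obstacle I expect is keeping the normalizations consistent; the explicit check above is what settles it.
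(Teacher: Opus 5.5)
Your Steps 1--2 are the paper's argument: the bispherical decomposition, the eigenvalue $4(r+\frac n2)(s+\frac n2)$, and the observation that only $(1,0),(0,1)$ reach $p\kappa_S=n(n+2)$. Pinning the constant through $\cL_S1=\kappa_S=n^2$, instead of through the dictionary $\cL_S=4\cL_{S,\mathrm{FL}}$ of \cref{app:normalizations}, is legitimate: $U_0$ is a constant multiple of the natural Cayley factor, so covariance makes $\cL_S$ a constant multiple of the standard operator.

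The real difference is in the orbit statement. The paper differentiates the explicit $SU(n+1,1)$ boosts $B_v=|r_v+v^*\zeta|^{-n}$ directly on the sphere (\cref{lem:modulation}, \eqref{eq:boost-linearization}) and reads off $-nX_\alpha$. That route needs no chart, raises no question at $N$, and requires no independence argument or dimension count. Your route, through Heisenberg bubbles plus a dimension count, also works, but three points need repair.

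(i) The parity claim is false. The horizontal translation derivative $(-\partial_{x_j}+2y_j\partial_t)U_0$ has the even part $2y_j\partial_tU_0$ under $(z,t)\mapsto(-z,-t)$. Use only the expansion at $e$ instead. The dilation quotient equals $-n$ there, while every translation quotient vanishes there because $U_0$ is maximal at $e$. Their leading terms are $2nx_j$, $2ny_j$ and $nt$, which gives independence.

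(ii) Decay alone does not give smoothness at $N$. What you actually need is that the derivative quotients are bounded near $N$, together with the fact that a point is removable for $\cA_S$ (it has zero capacity since $Q>2$).

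(iii) Your dilation check is even in $t$, so it cannot test the orientation conventions it is supposed to confirm. With \eqref{eq:group-law} and \eqref{eq:cayley-transform} exactly as printed, a horizontal left translation gives $2n[(1+|z|^2)x_j-ty_j]/D$. This is $n$ times the real part of the $j$th component of $\cC(z,-t)$, not of $\cC(z,t)$. The printed group law and Cayley map are compatible only after $t\mapsto-t$ in one of them. The paper's appeal to the Frank--Lieb spectrum carries the same sign slip, which is harmless once fixed. Still, it shows that ``the left translations produce the remaining coordinates in the same way'' has to be computed, not asserted.
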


\begin{proof}
The bispherical decomposition is
\[
 L^2(\SSph^{2n+1})
 =\widehat\bigoplus_{r,s\ge0}\cH_{r,s},
\]
and, in the normalization \eqref{eq:conformal-covariance},
\begin{equation}\label{eq:bispherical-spectrum}
 \left.\cL_S\right|_{\cH_{r,s}}
 =4\left(r+\frac n2\right)\left(s+\frac n2\right).
\end{equation}
This follows from the spherical spectral computation in \cite{FrankLieb2012}; see also \cref{app:normalizations}.  Since $p\kappa_S=n(n+2)$, equality in \eqref{eq:bispherical-spectrum} occurs only for $(r,s)=(1,0)$ or $(0,1)$.  Their real span is generated by the $Q$ coordinate functions.  Differentiating the conformal orbit of $1$ gives the same space.
\end{proof}

We need a quantitative slice that only differentiates smooth kernels, not a possibly singular conformal factor.

\begin{lemma}[Bubble-gauge modulation]\label{lem:modulation}
Suppose $F_j>0$ satisfies the global Green equation \eqref{eq:global-sphere-green} and $F_j\to1$ in $L^1(\SSph^{2n+1})$.  Then there are CR conformal transformations $\varphi_j\to\Id$ in $C^\infty$ such that the conformally transformed factors, still denoted by $F_j$, satisfy
\begin{equation}\label{eq:modulation-orthogonality}
 \int_{\SSph^{2n+1}}X_\alpha(F_j-1)\dV=0,
 \qquad \alpha=1,\ldots,Q.
\end{equation}
The transformed factors continue to satisfy \eqref{eq:global-sphere-green}.  If the only possible nonsmooth point before modulation is $N$, after modulation it is $N_j=\varphi_j^{-1}(N)$ and $N_j\to N$.
\end{lemma}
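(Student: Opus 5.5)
The plan is to find $\varphi_j$ by solving a finite-dimensional equation with the implicit function theorem. The key is to arrange every quantity so that derivatives fall only on smooth conformal data and never on $F_j$. For a CR automorphism $\varphi$ of $\SSph^{2n+1}$, let $J_\varphi$ be its volume Jacobian with respect to $dV_S$, so that $\int (G\circ\varphi)J_\varphi\dV=\int G\dV$. Define the transformed factor
\[
 F^\varphi=J_\varphi^{(Q-2)/(2Q)}\,(F\circ\varphi),
\]
which is the transformation law that makes $\cL_SF=\kappa_SF^p$ conformally invariant, since $p=(Q+2)/(Q-2)$.

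\textbf{Step 1: the Green equation is preserved.} I first check that $F^\varphi$ again satisfies \eqref{eq:global-sphere-green}. This uses the covariance of the spherical Green kernel,
\[
 G_S(\varphi\zeta,\varphi\eta)=J_\varphi(\zeta)^{-(Q-2)/(2Q)}J_\varphi(\eta)^{-(Q-2)/(2Q)}G_S(\zeta,\eta).
\]
This follows from the conformal covariance of $\cL_S$ and uniqueness of its Green function, or alternatively from \eqref{eq:green-dictionary} together with the corresponding covariance of $\GammaH$ under the affine maps and the CR inversion. A change of variables $\eta\mapsto\varphi\eta$ in \eqref{eq:global-sphere-green} then gives the claim. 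Because $J_\varphi$ is smooth and positive, $F^\varphi$ is smooth wherever $F\circ\varphi$ is, so the only possible singular point of $F^\varphi$ is $\varphi^{-1}(N)$.

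\textbf{Step 2: moving derivatives onto smooth kernels.} Choose a smooth $Q$-parameter family $g\mapsto\varphi_g$ with $g\in\RR^Q$ near $0$ and $\varphi_0=\Id$, transverse to the isotropy group of the constant solution. Concretely, take the exponentials of the $Q$ conformal vector fields whose induced first variation of $1$ is
\[
 \frac{d}{ds}\Big|_{s=0}\,1^{\varphi_{s e_\beta}}=c\,X_\beta,\qquad c\neq0.
\]
Such fields exist by the last assertion of \cref{lem:sphere-kernel}. Define
\[
 \Phi_j(g)_\alpha=\int X_\alpha\bigl(F_j^{\varphi_g}-1\bigr)\dV .
\]
Substituting $\zeta=\varphi_g^{-1}\omega$ gives
\[
 \int X_\alpha F_j^{\varphi_g}\dV=\int K_\alpha(g,\omega)F_j(\omega)\dV(\omega),
 \qquad
 K_\alpha(g,\cdot)=(X_\alpha\circ\varphi_g^{-1})\,J_{\varphi_g^{-1}}^{(Q+2)/(2Q)} .
\]
The kernel $K_\alpha$ is jointly $C^\infty$ for $g$ near $0$. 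Also $\int X_\alpha\dV=0$.

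Since $F_j\to1$ in $L^1$, the maps $\Phi_j$ converge to
\[
 \Phi_\infty(g)_\alpha=\int K_\alpha(g,\cdot)\dV
\]
in $C^1$ on a fixed neighbourhood of $0$. The error is at most $\|F_j-1\|_{L^1}\sup|\partial_g^{\le1}K|$.

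\textbf{Step 3: solving $\Phi_j(g)=0$.} We have $\Phi_\infty(0)=0$, and differentiating under the integral gives
\[
 D\Phi_\infty(0)_{\alpha\beta}=c\int X_\alpha X_\beta\dV=c'\delta_{\alpha\beta},\qquad c'\neq0 .
\]
This matrix is invertible. A quantitative inverse function theorem, stable under $C^1$-small perturbations, then yields zeros $g_j\to0$ of $\Phi_j$. Setting $\varphi_j=\varphi_{g_j}$ gives \eqref{eq:modulation-orthogonality}, with $\varphi_j\to\Id$ in $C^\infty$ because $g\mapsto\varphi_g$ is smooth. By Step 1, the new singular point is $N_j=\varphi_j^{-1}(N)$, and $N_j\to N$.

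\textbf{Main obstacle.} The step I expect to need care is the rigorous change of variables in the Green equation, i.e.\ the exact covariance of $G_S$ with the correct Jacobian exponents. This is where one must confirm that no atom at $N$ is created or moved incorrectly. The $L^1$ integrability of $F_j$ and $F_j^p$ from \cref{lem:global-sphere-green} is exactly what makes all the transformed integrals legitimate.
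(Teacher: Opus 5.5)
Your proposal is correct and follows essentially the same route as the paper. Both use an implicit-function argument on a $Q$-parameter family of CR automorphisms. The change of variables $\omega=\varphi_g\zeta$ puts all parameter dependence on a smooth kernel paired against $F_j\in L^1$, and the Jacobian at $(0,1)$ is a nonzero multiple of $\int X_\alpha X_\beta\dV$.

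The differences are cosmetic. The paper writes down an explicit boost $\varphi_v$ from $SU(n+1,1)$ with conformal factor $B_v=|r_v+v^*\zeta|^{-n}$ and computes its tangent $-nX_\alpha$ directly. You instead exponentiate abstract conformal vector fields and appeal to the tangent-direction clause of \cref{lem:sphere-kernel}. You also verify the Green-kernel covariance explicitly, which the paper only asserts.
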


\begin{proof}
We exhibit the required family explicitly.  Put $m=n+1$, so that
$\SSph^{2n+1}=\SSph^{2m-1}\subset\CC^m$.  For $v\in\CC^m$ close
to zero, set
\begin{equation}\label{eq:explicit-boost}
 r_v=(1+|v|^2)^{1/2},\qquad
 A_v=I+\frac{vv^*}{1+r_v},\qquad
 \varphi_v(\zeta)=\frac{A_v\zeta+v}{r_v+v^*\zeta}.
\end{equation}
The block matrix
\[
 M_v=\begin{pmatrix}A_v&v\\v^*&r_v\end{pmatrix}
\]
satisfies
\[
 M_v^*\begin{pmatrix}I&0\\0&-1\end{pmatrix}M_v
 =\begin{pmatrix}I&0\\0&-1\end{pmatrix},
 \qquad \det M_v=1.
\]
Thus $M_v\in SU(m,1)$, so $\varphi_v$ is a CR automorphism and
$\varphi_0=\Id$.  Its
Jacobian and conformal factor in our normalization are
\begin{equation}\label{eq:boost-factor}
 J_v(\zeta)=|r_v+v^*\zeta|^{-2m},
 \qquad
 B_v=J_v^{1/(p+1)}
     =|r_v+v^*\zeta|^{-n}.
\end{equation}
Define the conformal action by
\[
 \cT_vF=B_v(F\circ\varphi_v).
\]
Conformal covariance reads
\[
 \cL_S(\cT_vF)
 =B_v^p\bigl((\cL_SF)\circ\varphi_v\bigr),
\]
and hence preserves both the equation and its Green formulation.

For $q\in\CC^m$, direct differentiation of
\eqref{eq:explicit-boost} and \eqref{eq:boost-factor} gives
\begin{equation}\label{eq:boost-linearization}
 \left.\frac{d}{dt}\right|_{t=0}\varphi_{tq}(\zeta)
 =q-\zeta(q^*\zeta),
 \qquad
 \left.\frac{d}{dt}\right|_{t=0}B_{tq}(\zeta)
 =-n\Rea(q^*\zeta).
\end{equation}
Use the real parameter basis
$e_1,\ii e_1,\ldots,e_m,\ii e_m$ of $\CC^m\simeq\RR^Q$,
ordered compatibly with the coordinate functions $X_1,\ldots,X_Q$.
Then the tangent vectors to $\cT_v1=B_v$ are exactly
$-nX_1,\ldots,-nX_Q$.

Define
\[
 \Phi_\alpha(v,F)
 =\int X_\alpha(\cT_vF-1)\dV.
\]
After changing variables, each $\Phi_\alpha$ has the form
\[
 \Phi_\alpha(v,F)=\int K_{\alpha,v}F\dV-\int X_\alpha\dV,
\]
where $K_{\alpha,v}$ is smooth in both variables.  Thus the map is
$C^1$ in $v$ for $F\in L^1$.  At $(0,1)$ its real $v$-Jacobian is
\[
 \partial_{v_\beta}\Phi_\alpha(0,1)
 =-n\int X_\alpha X_\beta\dV,
\]
which is invertible.  The quantitative implicit function theorem
therefore supplies $v_j\to0$ with $\Phi(v_j,F_j)=0$.  Taking
$\varphi_j=\varphi_{v_j}$ proves the orthogonality.  The assertion
about $N_j$ follows from $\varphi_j\to\Id$.
\end{proof}

\subsection{The convex deficit and one-point escape}

We now consider a uniform exact-Riesz/Morrey sequence $W_j$ such that
\begin{equation}\label{eq:local-bubble-convergence}
 W_j\longrightarrow U_0
 \quad\text{in }C^\infty_{H,\loc}(\HH^n).
\end{equation}
Let $F_j=1+f_j$ be its spherical relative factors.

\begin{lemma}[Global $L^1$ convergence]\label{lem:global-L1}
Under \eqref{eq:local-bubble-convergence},
\begin{equation}\label{eq:F-L1}
 F_j\longrightarrow1
 \quad\text{in }L^1(\SSph^{2n+1}).
\end{equation}
If
\begin{equation}\label{eq:orlicz-remainder}
 \cR_p(f)=(1+f)^p-1-pf,
 \qquad -1<f<\infty,
\end{equation}
and
\begin{equation}\label{eq:epsilon-def}
 \varepsilon_j=\int_{\SSph^{2n+1}}\cR_p(f_j)\dV,
\end{equation}
then $\varepsilon_j\to0$.
\end{lemma}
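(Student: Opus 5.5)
Since the proof should use only the exact representation and the Morrey bound of \cref{sec:riesz-morrey}, the natural route is to reduce both assertions to convergence of the two integrals $\int F_j\dV$ and $\int F_j^p\dV$, combined with the exact Green equation \cref{lem:global-sphere-green}. By \eqref{eq:cayley-volume}, local convergence \eqref{eq:local-bubble-convergence} means $F_j\to1$ uniformly on compact subsets of $\SSph^{2n+1}\setminus\{N\}$. So the only issue is mass of $F_j$ or $F_j^p$ concentrating near $N$, that is, contributions from $\HH^n\setminus B_R(e)$.

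\emph{Step 1: uniform tails.} I will show that
\[
\int_{\HH^n\setminus B_R}U_0^pW_j\dx \quad\text{and}\quad \int_{\HH^n\setminus B_R}U_0W_j^p\dx
\]
tend to zero as $R\to\infty$, uniformly in $j$. The second integral is handled by dyadic annuli and the uniform Morrey bound \eqref{eq:uniform-morrey}. Since $U_0\asymp\rho^{2-Q}=\rho^{-2n}$ at infinity, the annulus of radius $2^kR$ contributes at most $C(2^kR)^{-2n}(2^kR)^n=C(2^kR)^{-n}$, and the sum is $CR^{-n}$; this is \cref{cor:green-tail}. For the first integral I will avoid needing pointwise bounds on $W_j$ by using \eqref{eq:sphere-integrability} at the level of tails. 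Write $W_j=n^2\GammaH*W_j^p$ and split the source into $\eta\in B_{R/2}$ and $\eta\notin B_{R/2}$.

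For the near source, when $\xi\notin B_R$ we have $\GammaH(\eta^{-1}\xi)\lesssim\rho(\xi)^{2-Q}$. Local convergence bounds $\int_{B_{R/2}}W_j^p$ by $C(R)$, but it is better to use Morrey, which gives at most $CR^n$. The resulting contribution is
\[
\int_{\rho>R}\rho^{-2n(p)}\rho^{-2n}R^n\dx\lesssim R^n\cdot R^{Q-2n(p+1)}=R^{n+2-2n-2}\to0,
\]
using $2n(p+1)=4n+4>Q$.

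For the far source, Tonelli gives
\[
\int_{\HH^n\setminus B_{R/2}}W_j^p(\eta)\Bigl(\int U_0^p(\xi)\,\GammaH(\eta^{-1}\xi)\dx\Bigr)\dy=n^{-2}\int_{\HH^n\setminus B_{R/2}}U_0W_j^p\dy,
\]
because $U_0$ itself satisfies the exact representation. This is again bounded by $CR^{-n}$.

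\emph{Step 2: conclusion.} By Step 1 and local uniform convergence, $\int F_j\to\int1$ and $\int F_j^p\to\int1$. Since $F_j\to1$ locally uniformly away from $N$ and both $F_j$ and $F_j^p$ have uniformly small mass near $N$, we get $F_j\to1$ in $L^1$, which is \eqref{eq:F-L1}. For the deficit, expand
\[
\varepsilon_j=\int F_j^p-1-p(F_j-1)\dV=\int F_j^p\dV-\mathrm{vol}-p\int(F_j-1)\dV,
\]
and each term tends to zero. Equivalently, by \cref{cor:convex-deficit}, $\varepsilon_j=(p-1)\bigl(\int U_0^{p+1}-\int U_0^pW_j\bigr)\to0$.

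The main obstacle is the uniform tail bound for $\int U_0^pW_j$, since no pointwise decay of $W_j$ is available. The duality trick in Step 1 is what I would try first: it transfers this tail onto the $W_j^p$ tail already controlled by Morrey, with the near-source piece handled separately.
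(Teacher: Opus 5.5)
Your proof is correct, but it divides the work differently from the paper.

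For the tail of $\int U_0^pW_j\dx$, which you single out as the main obstacle, the paper uses neither pointwise decay nor the Green representation. On a dyadic annulus $A_R=B_{2R}\setminus B_R$, H\"older's inequality and the uniform Morrey bound give $\int_{A_R}W_j\le\bigl(\int_{A_R}W_j^p\bigr)^{1/p}|A_R|^{1-1/p}\le CR^{n+2}$. Since $U_0^p=O(\rho^{-(Q+2)})$, dyadic summation then gives $\sup_j\int_{\rho>R}U_0^pW_j\dx\le CR^{-(n+2)}$. Your source-splitting argument (near source by Morrey, far source by Tonelli and the exact representation of $U_0$) is valid, but heavier than needed.

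For $\int F_j^p\dV$ the roles reverse: the paper estimates no tail at all. Integrating the global spherical equation, equivalently reciprocity \eqref{eq:sphere-integrability}, gives the exact identity $\int F_j^p\dV=\int F_j\dV$. Hence $\varepsilon_j=(p-1)\bigl(|\SSph^{2n+1}|-\int F_j\dV\bigr)$, which yields $\varepsilon_j\to0$ and also the identification $\varepsilon_j=(p-1)\mathfrak D_{U_0}(W_j)$ used afterwards. You instead bound the tail of $\int U_0W_j^p\dx$ by the Morrey bound, as in \cref{cor:green-tail}, and show that both integrals converge to $|\SSph^{2n+1}|$ separately.

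So the two proofs swap which tool handles which integral. Taking the paper's H\"older tail together with your Morrey tail shows that the lemma itself follows from local convergence and the uniform Morrey bound alone. The exact representation is needed only to identify $\varepsilon_j$ with the convex deficit.

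One harmless arithmetic slip: the near-source contribution is $R^{n+Q-2n(p+1)}=R^{-n-2}$, not $R^{-n}$.
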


\begin{proof}
By \eqref{eq:cayley-volume},
\[
 \int F_j\dV=\int_{\HH^n}U_0^pW_j\dx.
\]
On an annulus $A_R=B_{2R}\setminus B_R$, H\"older's inequality and the uniform Morrey bound give
\begin{equation}\label{eq:W-L1-annulus}
 \int_{A_R}W_j
 \le\left(\int_{A_R}W_j^p\right)^{1/p}|A_R|^{1-1/p}
 \le CR^{n+2}.
\end{equation}
Since $U_0^p=O(\rho^{-(Q+2)})$, dyadic summation yields
\begin{equation}\label{eq:weighted-tail-isolation}
 \sup_j\int_{\rho>R}U_0^pW_j\dx\le CR^{-(n+2)}.
\end{equation}
Local convergence and \eqref{eq:weighted-tail-isolation} prove \eqref{eq:F-L1}.

Integrating the spherical equation and using \eqref{eq:kappa-sphere} gives
\begin{equation}\label{eq:sphere-mass-equality}
 \int F_j^p\dV=\int F_j\dV.
\end{equation}
Consequently,
\begin{align}
 \varepsilon_j
 &=\int(F_j^p-1-p(F_j-1))\dV\notag\\
 &=(p-1)\left(|\SSph^{2n+1}|-\int F_j\dV\right)\longrightarrow0.
 \label{eq:epsilon-mass}
\end{align}
By \eqref{eq:cayley-volume}, \eqref{eq:relative-factor}, and
\cref{cor:convex-deficit}, this is exactly
\begin{equation}\label{eq:epsilon-deficit-dictionary}
 \varepsilon_j=(p-1)\mathfrak D_{U_0}(W_j).
\end{equation}
\end{proof}

Apply \cref{lem:modulation}; henceforth \eqref{eq:modulation-orthogonality} is assumed.  We redefine $f_j=F_j-1$ and $\varepsilon_j$ by \eqref{eq:epsilon-def} after this transformation.  Since the modulation tends to the identity, $F_j\to1$ locally on $\SSph^{2n+1}\setminus\{N\}$ and in $L^1$, while the possible bad point $N_j$ still converges to $N$.

The elementary convexity estimate used below is
\begin{equation}\label{eq:orlicz-two-scale}
 c_p\left(f^2\mathbf1_{\{|f|\le1\}}+f^p\mathbf1_{\{f>1\}}\right)
 \le\cR_p(f)
 \le C_p\left(f^2\mathbf1_{\{|f|\le1\}}+f^p\mathbf1_{\{f>1\}}\right).
\end{equation}

\begin{lemma}[Normalized defect compactness]\label{lem:normalized-defect}
Assume $\varepsilon_j>0$ and set
\[
 g_j=\frac{f_j}{\sqrt{\varepsilon_j}}.
\]
Then $(g_j)$ is uniformly integrable in $L^1(\SSph^{2n+1})$, every weak $L^1$ limit is zero, and
\begin{equation}\label{eq:local-strong-L2}
 g_j\longrightarrow0
 \quad\text{strongly in }L^2(K)
\end{equation}
for every compact $K\Subset\SSph^{2n+1}\setminus\{N\}$.
\end{lemma}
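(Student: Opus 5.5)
The plan rests on three ingredients: the two-scale convexity bound \eqref{eq:orlicz-two-scale}, the normalization $\int\cR_p(f_j)\dV=\varepsilon_j$, and the linearized equation for $g_j$ away from $N$.

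\emph{Uniform integrability.} Split $g_j$ on the sets $\{|f_j|\le1\}$ and $\{f_j>1\}$. On the first set, \eqref{eq:orlicz-two-scale} gives $\int_{\{|f_j|\le1\}}g_j^2\le c_p^{-1}$. This is a uniform $L^2$ bound, hence uniform integrability of that part. On $\{f_j>1\}$,
\[
|g_j|=\varepsilon_j^{-1/2}f_j\le\varepsilon_j^{-1/2}f_j^p .
\]
Therefore
\[
\int_{\{f_j>1\}}|g_j|\le C\varepsilon_j^{1/2}\to0,
\]
and this piece tends to zero in $L^1$, which is stronger than uniform integrability. Since $f_j>-1$, the region $f_j<-1$ is empty. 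So $(g_j)$ is uniformly integrable, and by Dunford--Pettis every subsequence has a weak $L^1$ limit $g$. The same splitting shows that $g$ is also a weak $L^2$ limit of the truncated parts, so $g\in L^2$.

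\emph{Identifying the limit.} Rewrite the global Green equation \eqref{eq:global-sphere-green} around $1$, using $\kappa_S\int G_S(\cdot,\eta)\dV=1$:
\[
f_j=\kappa_S\int G_S(\cdot,\eta)\bigl(pf_j+\cR_p(f_j)\bigr)\dV(\eta),
\qquad\text{equivalently}\qquad
\cA_Sf_j=\kappa_S\cR_p(f_j)
\]
in distributions on the whole sphere. Divide by $\sqrt{\varepsilon_j}$. The right-hand side has total mass $\kappa_S\sqrt{\varepsilon_j}\to0$, so it tends to $0$ as a measure. Testing against smooth functions, and using the self-adjointness of $\cA_S$ on $C^\infty$, gives $\cA_Sg=0$ in distributions. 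By \cref{lem:sphere-kernel}, $g\in\Span\{X_\alpha\}$. The orthogonality \eqref{eq:modulation-orthogonality} passes to weak $L^1$ limits because each $X_\alpha$ is bounded, so $\int X_\alpha g\dV=0$ for all $\alpha$, and hence $g=0$.

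\emph{Local strong $L^2$ convergence.} Fix $K\Subset\SSph^{2n+1}\setminus\{N\}$. On a slightly larger compact $K'$, local $C^\infty$ convergence of $W_j$ to $U_0$, transported through the modulations $\varphi_j\to\Id$, gives $F_j\to1$ uniformly on $K'$. Hence $|f_j|\le1$ there for large $j$, and $\cR_p(f_j)\le Cf_j^2$. The local equation $\cA_Sg_j=\kappa_S\cR_p(f_j)/\sqrt{\varepsilon_j}$ then has a right-hand side bounded by $C\sqrt{\varepsilon_j}\,g_j^2$, with $\sup_{K'}|f_j|\to0$. Interior subelliptic estimates for $\cL_S$ will give $S^{1}$ (or $S^{2}$) bounds on $K$ in terms of $\|g_j\|_{L^2(K')}$ and $\|g_j\|_{L^1}$, both uniformly bounded. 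Rellich compactness then upgrades the convergence to strong $L^2(K)$, and the limit must be the weak limit $0$.

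\emph{Main obstacle.} The delicate point is the interior estimate in the last step. I would handle it with the Green formulation rather than the differential one. Split $f_j/\sqrt{\varepsilon_j}$ into the contribution of $\eta\in K'$, which is a smoothing operator applied to something bounded in $L^2(K')$, plus the contribution from outside $K'$. For $\zeta\in K$, the outside kernel $G_S(\zeta,\eta)$ is smooth in $\zeta$, and it is paired with a density bounded in $L^1$, so that part lies in a compact family of smooth functions. Compactness of $G_S$ from $L^2$ to $L^2$, together with the fact that the quadratic term is $o(1)\cdot g_j$, closes the argument. One caveat is that the $\cR_p(f_j)$ part outside $K'$ is only small in measure rather than in $L^2$; I expect this to be harmless because the smooth kernel controls it.
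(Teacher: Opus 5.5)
Your proposal is correct and has the same three-step structure as the paper's proof. First, the two-scale bound \eqref{eq:orlicz-two-scale} together with Dunford--Pettis. Second, identification of weak limits through the global distributional equation $\cA_Sf_j=\kappa_S\cR_p(f_j)$, whose right-hand side is $L^1$-small after division by $\sqrt{\varepsilon_j}$, combined with \cref{lem:sphere-kernel} and the orthogonality \eqref{eq:modulation-orthogonality}. Third, the local equation $\cA_Sg_j=\kappa_S a_jg_j$ with $a_j\to0$ uniformly on $K'$, followed by interior subelliptic estimates and Rellich.

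The one real difference is how you treat the large part $\{f_j>1\}$. You use $f_j\le f_j^p\le c_p^{-1}\cR_p(f_j)$ there to get $\int_{\{f_j>1\}}|g_j|\dV\le c_p^{-1}\varepsilon_j^{1/2}\to0$. The paper instead applies H\"older on an arbitrary measurable $E$ and obtains $C\varepsilon_j^{1/p-1/2}|E|^{1-1/p}$. That bound is uniform in $j$ only because $p\le2$. Your estimate holds for every $p>1$, so at this step you do not need the restriction $p\le2$. This is the step the paper later names as the reason the Euclidean dimensions $N=3,4,5$ would need a different uniform-integrability argument.

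The Green-splitting alternative in your last paragraph is also valid. The far part is a smooth kernel paired with an $L^1$-bounded density. The near part is the compact operator $G_S$ on $L^2$ applied to a bounded sequence plus an $o(1)$ term. It is not needed, however: the $S_H^{2,2}(K)$ bound follows from the $L^2(K')$ bound on $g_j$ alone, with no global $L^1$ input, exactly as in the paper.
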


\begin{proof}
For every measurable $E$, \eqref{eq:orlicz-two-scale} and H\"older's inequality give
\begin{align}
 \int_E|g_j|\dV
 &\le C|E|^{1/2}
 +C\varepsilon_j^{1/p-1/2}|E|^{1-1/p}\notag\\
 &\le C\bigl(|E|^{1/2}+|E|^{1-1/p}\bigr),
 \label{eq:uniform-integrability}
\end{align}
where the last step uses $p\le2$ and, eventually, $\varepsilon_j\le1$.  Thus Dunford--Pettis gives weak sequential compactness in $L^1$.

Subtracting the constant equation from \eqref{eq:sphere-equation-punctured} gives globally in distributions
\begin{equation}\label{eq:linearized-defect-equation}
 \cA_Sf_j=\kappa_S\cR_p(f_j).
\end{equation}
After division by $\sqrt{\varepsilon_j}$, the right-hand side has $L^1$ norm $\kappa_S\sqrt{\varepsilon_j}\to0$.  Hence every weak limit of $g_j$ lies in $\ker\cA_S$.  The orthogonality \eqref{eq:modulation-orthogonality} and \cref{lem:sphere-kernel} force that limit to be zero.

Fix $K\Subset K'\Subset\SSph^{2n+1}\setminus\{N\}$.  On $K'$, $f_j\to0$ uniformly.  Define
\[
 a_j=
 \begin{cases}
 \cR_p(f_j)/f_j,&f_j\ne0,\\
 0,&f_j=0.
 \end{cases}
\]
Then $a_j\to0$ uniformly on $K'$ and
\[
 \cA_Sg_j=\kappa_Sa_jg_j.
\]
The small-$f$ part of \eqref{eq:orlicz-two-scale} gives a uniform $L^2(K')$ bound for $g_j$.  Interior subelliptic estimates give a uniform $S_H^{2,2}(K)$ bound, and the local Rellich theorem makes $(g_j)$ precompact in $L^2(K)$.  Any $L^2$ cluster point must agree with the already identified weak $L^1$ limit, hence is zero.  This proves \eqref{eq:local-strong-L2}.
\end{proof}

Define probability measures
\begin{equation}\label{eq:defect-probability}
 \nu_j=\varepsilon_j^{-1}\cR_p(f_j)\,dV_S.
\end{equation}

\begin{corollary}\label{cor:one-point-concentration}
Under the hypotheses of \cref{lem:normalized-defect}, the defect concentrates at one point:
\begin{equation}\label{eq:nu-to-delta}
 \nu_j\weakto\delta_N.
\end{equation}
\end{corollary}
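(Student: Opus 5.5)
The plan is to show that $\nu_j(K)\to0$ for every compact $K\Subset\SSph^{2n+1}\setminus\{N\}$. Since the $\nu_j$ are probability measures on the compact sphere, weak-$*$ compactness will then force every subsequential limit to be a probability measure carried by $\{N\}$, hence equal to $\delta_N$, and the full sequence converges.

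Fix such a $K$ and choose $K'$ with $K\Subset K'\Subset\SSph^{2n+1}\setminus\{N\}$. Because $N_j\to N$, for large $j$ the set $K'$ avoids the possibly singular point, and $f_j\to0$ uniformly on $K'$, as recorded in the proof of \cref{lem:normalized-defect}. In particular $|f_j|\le1$ on $K'$ for large $j$. The upper half of \eqref{eq:orlicz-two-scale} then gives
\[
 \nu_j(K)=\varepsilon_j^{-1}\int_K\cR_p(f_j)\dV
 \le C_p\,\varepsilon_j^{-1}\int_K f_j^2\dV
 = C_p\int_K g_j^2\dV .
\]
By \eqref{eq:local-strong-L2}, the right-hand side tends to zero. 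Hence every weak-$*$ cluster point $\nu$ of $(\nu_j)$ satisfies $\nu(K)\le\liminf\nu_j(\mathrm{int}\,K'')=0$ for compact $K''$, and so $\nu(\SSph^{2n+1}\setminus\{N\})=0$. Mass is preserved because the sphere is compact and the constant $1$ is an admissible test function, so $\nu(\SSph^{2n+1})=1$ and therefore $\nu=\delta_N$.

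The only step needing care is the interface with the moving singular point $N_j$. The estimate is needed on a fixed compact set that is eventually disjoint from a neighbourhood of $N_j$, and also the uniform smallness of $f_j$ there. Both follow because the modulations $\varphi_j\to\Id$ smoothly and $W_j\to U_0$ locally uniformly, which transfers to uniform convergence of the transformed $F_j$ on compacts away from $N$. After that, the argument is just the quadratic comparison combined with the local strong $L^2$ convergence already proved in \cref{lem:normalized-defect}.
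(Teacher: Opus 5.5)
Your proposal is correct and follows the paper's proof essentially verbatim. You use $|f_j|\le1$ eventually on compacts $K\Subset\SSph^{2n+1}\setminus\{N\}$, the quadratic upper bound in \eqref{eq:orlicz-two-scale}, and the local strong $L^2$ convergence \eqref{eq:local-strong-L2} to get $\nu_j(K)\to0$, and then conclude from compactness of the sphere and unit total mass. One small repair: your cluster-point sentence is slightly garbled and should read $\nu(K)\le\nu(\mathrm{int}\,K'')\le\liminf_j\nu_j(K'')=0$ for a compact $K''$ away from $N$ with $K\subset\mathrm{int}\,K''$; the argument itself is sound.
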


\begin{proof}
For $K\Subset\SSph^{2n+1}\setminus\{N\}$, one has $|f_j|\le1$ for large $j$.  Thus
\[
 \nu_j(K)
 \le C\varepsilon_j^{-1}\int_Kf_j^2\dV
 =C\int_Kg_j^2\dV\longrightarrow0.
\]
The sphere is compact and $\nu_j$ has total mass one, so every weak limit is the unit mass at $N$.
\end{proof}

\subsection{The nonlinear barycentre identity}

Let $q_\alpha$ be the element of the real parameter basis used after
\eqref{eq:boost-linearization} that corresponds to $X_\alpha$, and set
$B_{\alpha,t}=B_{tq_\alpha}$.  The explicit boost family then gives
\begin{equation}\label{eq:bubble-branch-tangent}
 B_{\alpha,0}=1,
 \qquad
 \left.\frac{d}{dt}\right|_{t=0}B_{\alpha,t}=-nX_\alpha.
\end{equation}

\begin{lemma}[Nonlinear barycentre]\label{lem:nonlinear-barycenter}
Every global Green solution $F=1+f$ satisfies
\begin{equation}\label{eq:nonlinear-barycenter}
 \int_{\SSph^{2n+1}}X_\alpha\cR_p(f)\dV=0,
 \qquad \alpha=1,\ldots,Q.
\end{equation}
\end{lemma}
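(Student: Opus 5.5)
The plan is to differentiate the spherical reciprocity identity along the boost branches $B_{\alpha,t}$ of \eqref{eq:bubble-branch-tangent}. Each $B_{\alpha,t}=\cT_{tq_\alpha}1$ is a smooth positive solution of $\cL_SB=\kappa_SB^p$ on the whole sphere, since it is the conformal image of the constant solution; it is therefore a bubble expressed in the spherical gauge. The spherical version of \cref{lem:reciprocity} for a global Green solution $F$ (from \cref{lem:global-sphere-green}) and a smooth solution $B$ is obtained by inserting the Green formula for $F$ into $\int B^pF\dV$ and using symmetry of $G_S$ and Tonelli. This gives
\[
 \Psi_\alpha(t):=\int_{\SSph^{2n+1}}\bigl(B_{\alpha,t}^pF-B_{\alpha,t}F^p\bigr)\dV=0
\]
for all small $|t|$. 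All integrands are bounded by $C(F+F^p)\in L^1$, uniformly for $|t|$ small. The same bound holds for the $t$-derivatives, because $B_{\alpha,t}$ is smooth in $(t,\zeta)$ and bounded above and below. This is the $C^1$ domination mentioned in \cref{lem:reciprocity}, and it lets us differentiate under the integral sign.

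Next differentiate at $t=0$. Using $\frac{d}{dt}B_{\alpha,t}|_{t=0}=-nX_\alpha$ and $B_{\alpha,0}=1$, we get
\[
 0=\Psi_\alpha'(0)=-n\int X_\alpha\bigl(pF-F^p\bigr)\dV .
\]
Now write $F=1+f$, so that $F^p-pF=\cR_p(f)+1-p$. This gives
\[
 \int X_\alpha\cR_p(f)\dV=(p-1)\int X_\alpha\dV=0,
\]
because the coordinate functions have zero mean on the sphere. This is \eqref{eq:nonlinear-barycenter}.

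The main point to check carefully is the reciprocity step $\int B^pF=\int BF^p$ for the smooth spherical solution $B$. One way is to write $B$ through its own Green representation, $B=\kappa_S\int G_S(\cdot,\eta)B^p$, and note that $G_S\ge0$ with $G_S(\zeta,\eta)$ symmetric. Then Tonelli applied to the nonnegative double integral
\[
 \kappa_S\iint B(\zeta)^pG_S(\zeta,\eta)F(\eta)^p\,dV_S\,dV_S
\]
evaluates it in two ways. Integrating first in $\zeta$ gives $\int BF^p$; integrating first in $\eta$ gives $\int B^pF$, using the Green equation for $F$ almost everywhere.

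Alternatively, one can pull back to $\HH^n$ by \eqref{eq:green-dictionary} and apply \cref{lem:reciprocity} directly to $W$ and the bubble $U_0(B_{\alpha,t}\circ\cC)$. Finiteness of both sides follows from $F,F^p\in L^1$ and the boundedness of $B$. With the identity holding for every small $t$ and the dominated differentiation justified, the computation above completes the proof.
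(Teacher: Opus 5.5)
Your proposal is correct and follows the paper's own proof: exact reciprocity between $F$ and the boost branch $B_{\alpha,t}$, differentiation at $t=0$ justified by uniform $C^1$ domination with $F,F^p\in L^1$, then the identity $F^p-pF=\cR_p(f)+1-p$ together with $\int X_\alpha\dV=0$. The one addition is your Tonelli derivation of spherical reciprocity from the Green representation of $B_{\alpha,t}$ and the symmetry and positivity of $G_S$, which spells out the step the paper states in one line.
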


\begin{proof}
Green-kernel symmetry and Tonelli's theorem give exact reciprocity between $F$ and $B_{\alpha,t}$:
\begin{equation}\label{eq:sphere-reciprocity}
 \int B_{\alpha,t}^pF\dV
 =\int B_{\alpha,t}F^p\dV.
\end{equation}
For small $t$, the bubble branch and its derivative are uniformly bounded, while $F,F^p\in L^1$.  We may therefore differentiate \eqref{eq:sphere-reciprocity} at $t=0$.  Using \eqref{eq:bubble-branch-tangent} and dividing by $-n$ gives
\[
 \int X_\alpha(pF-F^p)\dV=0.
\]
Since $F^p-pF=1-p+\cR_p(f)$ and $\int X_\alpha\dV=0$, this is exactly \eqref{eq:nonlinear-barycenter}.
\end{proof}

\begin{proof}[Proof of \cref{thm:intro-isolation}]
Assume, after taking a subsequence, that no $W_j$ is a bubble.  If $\varepsilon_j=0$, strict convexity in \eqref{eq:orlicz-remainder} gives $f_j=0$ almost everywhere after modulation, hence $W_j$ is a bubble.  Therefore $\varepsilon_j>0$.

Divide \eqref{eq:nonlinear-barycenter} by $\varepsilon_j$.  In terms of \eqref{eq:defect-probability},
\begin{equation}\label{eq:zero-barycenter-probability}
 \int X_\alpha\,d\nu_j=0
 \qquad(\alpha=1,\ldots,Q).
\end{equation}
On the other hand, \cref{cor:one-point-concentration} makes the left-hand side converge to $X_\alpha(N)$.  The $Q$ coordinates of a point on the unit sphere cannot all vanish.  This contradicts \eqref{eq:zero-barycenter-probability} and proves the theorem.
\end{proof}

\section{The Jerison--Lee defect and annular growth}\label{sec:jl-defect}

In this section $W$ is a bounded positive solution of
\eqref{eq:main-equation}.  We introduce the nonnegative geometric
defect and derive a uniform annular growth budget from the $m=0$
Jerison--Lee divergence identity.

\subsection{The defect density}

Write
\begin{equation}\label{eq:f-def}
 f=\frac1n\log W-\log2.
\end{equation}
With the complex horizontal fields
\[
 Z_\alpha=\partial_{z_\alpha}+\ii\overline z_\alpha\partial_t,
 \qquad
 Z_{\bar\alpha}=\overline{Z_\alpha},
\]
we use subscripts for covariant derivatives in the standard flat pseudohermitian structure and sum over repeated Greek indices.  Following Jerison--Lee and Flynn--V\'etois, set
\begin{align}
 D_{\alpha\beta}
 &=f_{\alpha\beta}-2f_\alpha f_\beta,
 &
 E_{\alpha\bar\beta}
 &=f_{\alpha\bar\beta}
   -\frac1n f_{\gamma\bar\gamma}\delta_{\alpha\bar\beta},
 \label{eq:DE-tensors}\\
 D_\alpha
 &=D_{\alpha\beta}f_{\bar\beta},
 &
 E_\alpha
 &=E_{\alpha\bar\beta}f_\beta,
 \label{eq:DE-one-forms}\\
 g&=|\partial f|^2+e^{2f}-\ii f_0,
 &
 G_\alpha&=\ii f_{0\alpha}+gf_\alpha.
 \label{eq:gG-def}
\end{align}
The equation for $W$ is equivalent to
\begin{equation}\label{eq:f-equation}
 \Delta_bf=n|\partial f|^2+ne^{2f}.
\end{equation}

Define the $m=0$ defect density
\begin{equation}\label{eq:defect-density}
 \fA_f
 =e^{2(n-1)f}
 \left[
 e^{2f}\bigl(|D_{\alpha\beta}|^2+|E_{\alpha\bar\beta}|^2\bigr)
 +|D_\alpha|^2+|E_\alpha|^2+|G_\alpha|^2
 \right].
\end{equation}
It is nonnegative and vanishes exactly when all tensors displayed in \eqref{eq:defect-density} vanish.  We use the standard pointwise rigidity implication recorded precisely in \cref{prop:zero-defect-rigidity}:
\begin{equation}\label{eq:zero-defect-bubble}
 \fA_f\equiv0
 \quad\Longrightarrow\quad
 W\in\cM_{\rm bub}.
\end{equation}

\subsection{Uniform relative derivatives}

The highest derivative in \eqref{eq:defect-density} is $f_{0\alpha}$, of homogeneous order three because $\partial_t$ has weight two.

\begin{lemma}[Relative interior estimates]\label{lem:relative-derivatives}
For every differential monomial $D_H^I$ generated by horizontal fields and their commutators, of homogeneous order $|I|_H\le3$, there is a constant
$C_I=C_I(n,\|W\|_\infty)$ such that
\begin{equation}\label{eq:relative-u-derivatives}
 |D_H^IW(a)|\le C_IW(a),
 \qquad a\in\HH^n.
\end{equation}
Consequently,
\begin{equation}\label{eq:log-derivatives}
 |D_H^If(a)|\le C_I
 \qquad(1\le|I|_H\le3),
\end{equation}
uniformly in $a$.
\end{lemma}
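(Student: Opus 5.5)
The plan is a rescaling argument at the natural scale $W(a)^{-1/n}$, using only that $W$ is bounded. Fix $a\in\HH^n$ and set $\mu=W(a)^{-1/n}$. Consider the rescaled solution
\[
 V(\zeta)=\mu^nW(a\delta_\mu\zeta)=W(a)^{-1}W(a\delta_\mu\zeta),
\]
which solves \eqref{eq:main-equation} and satisfies $V(e)=1$. The bound $W\le\|W\|_\infty$ does not bound $V$ directly. We need a bound for $V$ on a fixed ball $B_2(e)$, uniformly in $a$, and we obtain it from the Harnack inequality applied to $W$ rather than to $V$.

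The key step is to view \eqref{eq:main-equation} as a linear equation $LW=cW$ with potential $c=n^2W^{p-1}=n^2W^{2/n}$. This potential satisfies $0\le c\le n^2\|W\|_\infty^{2/n}$, so it is bounded uniformly on $\HH^n$.

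Work on balls of a fixed radius, say $B_2(a)$. There the subelliptic Harnack inequality for $L-c$ with bounded $c$ gives
\[
 \sup_{B_2(a)}W\le H\inf_{B_2(a)}W,
 \qquad H=H(n,\|W\|_\infty).
\]
This is uniform in $a$ by left invariance.

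Next, the interior subelliptic Schauder estimates (\cref{app:analytic-inputs}) for $L u=cu$ on $B_2(a)$ give $\|D_H^IW\|_{L^\infty(B_1(a))}\le C\sup_{B_2(a)}W$. Here $c=n^2W^{2/n}$ must itself be controlled in $C^{k,\alpha}_H$, so the estimate is obtained by bootstrapping.

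The bootstrap is straightforward where $W$ is bounded below, but $W^{2/n}$ is not smooth near zero. This is harmless for the following reason. On $B_2(a)$ the Harnack inequality gives $W\ge W(a)/H>0$. Normalize $\tilde W=W/W(a)$, so that $1/H\le\tilde W\le H$ on $B_2(a)$. Then $\tilde W$ solves
\[
 L\tilde W=n^2W(a)^{2/n}\tilde W^{p},
\]
whose coefficient $n^2W(a)^{2/n}$ is bounded by $n^2\|W\|_\infty^{2/n}$. Since $\tilde W$ is pinched between positive constants, $\tilde W^p$ is a smooth function of $\tilde W$ on this range. The standard bootstrap (Folland–Stein $S^{k,q}$ estimates, then Hölder) therefore yields bounds on all horizontal derivatives of $\tilde W$ on $B_1(a)$ that depend only on $n$ and $\|W\|_\infty$. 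Commutators $\partial_t=\frac{1}{4}[Y_j,X_j]$ (up to normalization) are covered because homogeneous order counts $\partial_t$ as two horizontal derivatives. Evaluating at $a$ and multiplying back by $W(a)$ gives \eqref{eq:relative-u-derivatives}.

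For \eqref{eq:log-derivatives}, note that $f=\frac1n\log W-\log2$, so each derivative of $f$ of homogeneous order between 1 and 3 is a polynomial, via the Faà di Bruno formula, in the quotients $D_H^JW/W$ with $|J|_H\le3$. Each such quotient is bounded by the previous step.

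The main obstacle is uniformity in $a$ when $W(a)$ is very small, since absolute estimates then say nothing about ratios. The normalization $\tilde W=W/W(a)$, combined with a Harnack constant that depends only on the upper bound of the potential, is exactly what resolves this.
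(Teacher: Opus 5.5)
Your argument is correct and is essentially the paper's proof: normalize $W/W(a)$ on a unit-scale ball around $a$, treat $n^2W^{2/n}$ as a potential bounded by $n^2\|W\|_\infty^{2/n}$ so that Harnack gives two-sided bounds uniform in $a$, then bootstrap interior subelliptic estimates through the now-smooth nonlinearity $\tilde W^p$ and evaluate at $a$. The natural-scale rescaling $V$ in your opening paragraph is never used and can simply be deleted.
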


\begin{proof}
Fix $a$ and normalize $v=W/W(a)$ on $B_1(a)$.  Rewrite its equation as the linear equation
\[
 Lv=n^2W(a)^{p-1}v^{p-1}v
   =n^2W^{p-1}v.
\]
The potential $n^2W^{p-1}$ is bounded by $n^2\|W\|_\infty^{p-1}$.  Since $v(a)=1$, the unit-ball Harnack inequality gives two-sided bounds for $v$ on $B_{3/4}(a)$ with constants independent of $a$.  Interior estimates for sums of squares, followed by a bootstrap in the smooth nonlinearity $v^p$, give
\[
 \|v\|_{C_H^{3,\alpha}(B_{1/2}(a))}\le C.
\]
Evaluating at $a$ proves \eqref{eq:relative-u-derivatives}.  The identities obtained by differentiating $\log W$ and the positive lower bound for $v$ give \eqref{eq:log-derivatives}.  Derivatives such as $f_{0\alpha}$ are included because $\partial_t$ is a step-two commutator of horizontal fields, so $\partial_tZ_\alpha$ has homogeneous horizontal order three.  The required Harnack and interior estimates are recalled in \cref{app:analytic-inputs}.
\end{proof}

\begin{corollary}[Pointwise defect control]\label{cor:defect-pointwise}
Set
\begin{equation}\label{eq:q-minus}
 q_-=2-\frac2n.
\end{equation}
Then
\begin{equation}\label{eq:defect-pointwise}
 \fA_f\le C W^{q_-}
\end{equation}
with $C=C(n,\|W\|_\infty)$.
\end{corollary}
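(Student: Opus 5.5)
The estimate is pointwise, so I will combine the prefactor $e^{2(n-1)f}$ in \eqref{eq:defect-density} with the uniform bounds \eqref{eq:log-derivatives} from \cref{lem:relative-derivatives}. By \eqref{eq:f-def}, $e^{f}=W^{1/n}/2$, so
\[
 e^{2(n-1)f}=2^{-2(n-1)}W^{2(n-1)/n}=2^{-2(n-1)}W^{q_-},
 \qquad
 e^{2f}=\tfrac14 W^{2/n}\le \tfrac14\|W\|_\infty^{2/n}.
\]
The prefactor therefore already carries exactly the power $q_-=2-2/n$. It remains to show that the bracket in \eqref{eq:defect-density} is bounded by a constant depending only on $n$ and $\|W\|_\infty$.

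Each quantity in the bracket is a polynomial in the horizontal and $T$-derivatives of $f$ of homogeneous order between $1$ and $3$, with coefficients that are bounded functions of $e^{2f}$.
\begin{itemize}
\item $D_{\alpha\beta}$ and $E_{\alpha\bar\beta}$ involve $f_{\alpha\beta}$, $f_{\alpha\bar\beta}$, $f_{\gamma\bar\gamma}$ and the products $f_\alpha f_\beta$. All of these have order at most two.
\item $D_\alpha$ and $E_\alpha$ are products of those tensors with first derivatives.
\item $g=|\partial f|^2+e^{2f}-\ii f_0$ involves $f_0=Tf$, which has homogeneous order two.
\item $G_\alpha=\ii f_{0\alpha}+gf_\alpha$ involves $f_{0\alpha}$, which has order three. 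This is exactly the case \cref{lem:relative-derivatives} was designed to cover.
\end{itemize}
In the flat pseudohermitian structure the covariant derivatives are just $Z_\alpha,Z_{\bar\alpha},T$ applied iteratively, so each is a monomial $D_H^If$ with $1\le|I|_H\le3$. By \eqref{eq:log-derivatives}, each is bounded uniformly in the point, with constants $C_I(n,\|W\|_\infty)$. Together with $e^{2f}\le C(\|W\|_\infty)$, every term in the bracket is bounded by a constant $C(n,\|W\|_\infty)$, and multiplying by the prefactor gives \eqref{eq:defect-pointwise}.

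There is no genuine obstacle here. The only point to check is bookkeeping: the derivatives of $f=\frac1n\log W-\log 2$ are rational expressions in $D^IW/W$, and the ratios $D^IW/W$ are bounded by \eqref{eq:relative-u-derivatives}. This is precisely the content of \eqref{eq:log-derivatives}, so the constant never depends on a lower bound for $W$. Only the explicit factor $W^{q_-}$ produces decay.
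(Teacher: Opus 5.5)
Your proposal is correct and matches the paper's proof: the prefactor $e^{2(n-1)f}$ is a dimensional multiple of $W^{q_-}$, while $e^{2f}\le C(\|W\|_\infty)$ and every derivative factor in the bracket (up to $f_{0\alpha}$) is bounded uniformly by \eqref{eq:log-derivatives} from \cref{lem:relative-derivatives}. Your added bookkeeping, namely the explicit constant $2^{-2(n-1)}$ and the observation that derivatives of $\log W$ are polynomials in the ratios $D_H^IW/W$, only spells out what the paper leaves implicit.
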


\begin{proof}
The derivative factors in \eqref{eq:defect-density} are bounded by
\cref{lem:relative-derivatives}.  The same is true of $e^{2f}$.  Finally,
\[
 e^{2(n-1)f}=C_nW^{2(n-1)/n}=C_nW^{q_-}.
\]
\end{proof}

\subsection{The annular inequality}

For $a\in\HH^n$, define
\begin{equation}\label{eq:J-def}
 J_a(R)=\int_{B_R(a)}\fA_f\dx.
\end{equation}

\begin{proposition}[Jerison--Lee annular estimate]\label{prop:annular-defect}
For every $R>0$,
\begin{equation}\label{eq:annular-defect}
 \begin{aligned}
 J_a(R)^2
 &\le CR^{-2}
 \left[
 \int_{B_{2R}(a)}W^{p+1}\dx
 +R^{-4}\int_{B_{2R}(a)}W^{q_-}\dx
 \right]\\
 &\qquad\times\bigl(J_a(2R)-J_a(R)\bigr).
 \end{aligned}
\end{equation}
The constant is independent of $a$ and $R$.
\end{proposition}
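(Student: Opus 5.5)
Our plan is to integrate the $m=0$ Jerison--Lee divergence identity \cite[Proposition~4.1, formula~(4.2)]{JerisonLee1988} against a squared cutoff and then close the argument with a Cauchy--Schwarz inequality supported on the annulus. In our conventions the identity reads
\[
 \Rea Z_{\bar\alpha}\bigl(e^{2nf}\mathcal V_\alpha\bigr)\ \ge\ c\,\fA_f
\]
(an equality up to a positive multiple). Here $\mathcal V_\alpha$ is a one-form built from $D_{\alpha\beta},E_{\alpha\bar\beta},D_\alpha,E_\alpha,G_\alpha$ and first derivatives of $f$, and each of its terms is linear in one of the defect tensors. The first thing to verify, as the appendix does, is the structural bound
\[
 \bigl|e^{2nf}\mathcal V\bigr|\le C\,e^{(n+1)f}\bigl(e^{f}+|\partial f|\bigr)\,\fA_f^{1/2}.
\]
It follows from writing $e^{2nf}=e^{(n-1)f}\cdot e^{(n+1)f}$ and absorbing one factor $e^{(n-1)f}$ together with the tensor (and an $e^{f}$ where a tensor $D_{\alpha\beta}$ or $E_{\alpha\bar\beta}$ appears) into $\fA_f^{1/2}$.

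Take $\eta\in C_c^\infty(B_{2R}(a))$ with $\eta=1$ on $B_R(a)$ and $|\nabla_H\eta|\le C/R$; the $\partial_t$-derivative of $\eta$ does not enter, since the divergence is horizontal. Multiply the identity by $\eta^2$ and integrate by parts. Since $\nabla_H\eta$ is supported in the annulus $A=B_{2R}\setminus B_R$, we get
\[
 J_a(R)\le\int\eta^2\fA_f\le C R^{-1}\int_{A}e^{(n+1)f}\bigl(e^f+|\partial f|\bigr)\fA_f^{1/2}.
\]
Cauchy--Schwarz then gives
\[
 J_a(R)^2\le CR^{-2}\Bigl(\int_{B_{2R}}e^{2(n+1)f}(e^{2f}+|\partial f|^2)\Bigr)\bigl(J_a(2R)-J_a(R)\bigr).
\]
The integration by parts needs no boundary terms, because $\eta$ has compact support and $W$ is smooth. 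Nothing about the behaviour at infinity is used.

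It remains to bound the first factor by the two terms in \eqref{eq:annular-defect}. The part $e^{2(n+2)f}=cW^{2+2/n}=cW^{p+1}$ is immediate. The gradient part $\int e^{2(n+1)f}|\partial f|^2$ is where a pointwise bound from \cref{lem:relative-derivatives} would give only $W^{2+2/n}$ again (acceptable) but the paper's form suggests an integration-by-parts treatment instead. Writing $e^{2(n+1)f}|\partial f|^2$ as a multiple of $|\partial W^{(n+1)/n}|^2$, we test \eqref{eq:f-equation}, or equivalently the equation for $W^{(n+1)/n}$, against $\eta^2 W^{(n+1)/n}$. This produces $\int\eta^2W^{p+1}$ plus a cutoff error. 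The cutoff error is controlled by $R^{-2}\int W^{2+2/n}$, which is harmless, or alternatively, after a second cutoff/Young step with the weight $e^{2(n-1)f}$, by $R^{-4}\int W^{q_-}$. This matches the second term: $q_-=2-2/n$ corresponds to $e^{2(n-1)f}$, and $R^{-4}$ to two further cutoff derivatives. Expanding the supports slightly is handled by running the argument with $B_{2R}$ replaced by an intermediate ball and using the fixed doubling.

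The main obstacle is the first step, namely translating the Jerison--Lee formula into these conventions and checking that \emph{every} flux term is linear in a defect tensor with exactly the weight $e^{(n+1)f}(e^f+|\partial f|)$. Any flux term not of this form, such as a pure $|\partial f|^2 f_\alpha$ contribution, would break the quadratic closure. For this check I would first write out the one-form explicitly in the Flynn--V\'etois normalization, including the $\ii f_0$ parts through $g$ and $G_\alpha$. Constants depend only on $n$ here, and \cref{lem:relative-derivatives} is invoked only if needed to control lower-order terms, with constants depending on $\|W\|_\infty$.
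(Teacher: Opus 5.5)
Your plan has a genuine gap in its first step: the flux bound you posit is false. In these conventions the identity is $\Rea\bigl((w\mathcal V_\alpha)_{\bar\alpha}\bigr)\ge c\,\fA_f$ with $w=e^{2(n-1)f}$, $\mathcal V_\alpha=g(D_\alpha+E_\alpha)-\ii f_0(D_\alpha-3E_\alpha+3G_\alpha)$ and $g=|\partial f|^2+e^{2f}-\ii f_0$. Each flux term is linear in a defect one-form, as you hoped. However, the coefficients have homogeneous order two and contain the Reeb derivative $f_0$, so the correct bound is $|w\mathcal V|\le C\,w^{1/2}\bigl(e^{2f}+|\partial f|^2+|f_0|\bigr)\fA_f^{1/2}$.

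A scaling check rules your version out. Under the natural rescaling, $e^f$ and $|\partial f|$ have weight one, $f_0$ has weight two and $\fA_f$ has weight $Q+2=2n+4$. The flux, whose horizontal divergence dominates $c\fA_f$, must therefore have weight $2n+3$. Your right-hand side $e^{(n+1)f}(e^f+|\partial f|)\fA_f^{1/2}$ has weight $2n+4$, and the inequality you derive from it is not invariant under $\cT_{a,\mu}$. Relatedly, $e^{2(n+2)f}$ is a multiple of $W^{2+4/n}$, not of $W^{p+1}$; it is $we^{4f}=e^{2(n+1)f}$ that matches $W^{p+1}$.

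With the correct flux and the cutoff $\eta^8$, Cauchy--Schwarz leaves the factor $R^{-2}\int w(e^{4f}+|\partial f|^4+f_0^2)\eta^6$. Testing the equation against $\eta^2W^{(n+1)/n}$ controls neither $\int w|\partial f|^4\eta^6$ nor, especially, $\int wf_0^2\eta^6$.

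Controlling these two terms is the real work, and it is missing from your plan. The paper integrates two further divergence identities, for $\Ima\bigl((wf_0f_\alpha\phi)_{\bar\alpha}\bigr)$ and $\Rea\bigl((w|\partial f|^2f_\alpha\phi)_{\bar\alpha}\bigr)$; see \cref{lem:jl-f0-control,lem:jl-gradient-control}. The second has coefficient $n-1$ on $w|\partial f|^4$, which is where $n\ge2$ enters. Both identities generate error terms linear in $D_\alpha,E_\alpha,G_\alpha$, that is, in the defect itself.

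A weighted Young inequality with $\phi=\eta^6$, $\sigma=1/3$, $\varepsilon=\rho R^2$ turns these errors into $C_\rho\int_{B_{2R}(a)}w(e^{4f}+R^{-4})$ plus $C\rho R^2\int\fA_f\eta^8$. Multiplied by $R^{-2}I_{\rm ann}\le R^{-2}I$, the second piece becomes $C\rho I^2$ and is absorbed into the left side. This is why a high power of the cutoff is needed: the defect error must reappear with the same weight $\eta^8$ as the left side, and a squared cutoff leaves no room for that.

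Your fallback through \cref{lem:relative-derivatives} does not close the gap either. Bounding $w(|\partial f|^4+f_0^2)\le Cw$ pointwise yields $R^{-2}\int_{B_{2R}(a)}W^{q_-}$ without the factor $R^{-4}$. That is weaker than the statement for $R\ge1$. The missing $R^{-4}$ is exactly what \cref{prop:initial-defect-growth,prop:defect-dimension-reduction} need for the lower-power term to be dominated by the energy term; for $n\le4$, for example, one would otherwise need $D\ge4$.
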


\begin{proof}
Take a cutoff $\eta_{a,R}$ equal to one on $B_R(a)$, supported in
$B_{2R}(a)$, and satisfying the scale-invariant horizontal derivative
bound in \eqref{eq:jl-cutoff-properties}.  Since
\[
 we^{4f}=C_nW^{p+1},
 \qquad w=C_nW^{q_-},
\]
\cref{prop:m0-annular-cutoff} gives
\begin{align*}
 \left(\int\fA_f\eta_{a,R}^8\right)^2
 &\le CR^{-2}
 \left[
  \int_{B_{2R}(a)}W^{p+1}\dx
  +R^{-4}\int_{B_{2R}(a)}W^{q_-}\dx
 \right]\\
 &\qquad\times
 \int_{B_{2R}(a)\setminus B_R(a)}
       \fA_f\eta_{a,R}^8.
\end{align*}
The integral on the left is at least $J_a(R)$, whereas the final
integral is at most $J_a(2R)-J_a(R)$.  This proves
\eqref{eq:annular-defect}.  Left translation of the cutoff proves
uniformity in $a$.
\end{proof}

\subsection{A discrete Riccati lemma}

\begin{lemma}[Discrete Riccati growth]\label{lem:discrete-riccati}
Let $J:[R_0,\infty)\to[0,\infty)$ be nondecreasing and bounded above by a polynomial.  Suppose
\begin{equation}\label{eq:riccati-assumption}
 J(R)^2\le CR^b\bigl(J(2R)-J(R)\bigr)
 \qquad(R\ge R_0).
\end{equation}
If $b>0$, then $J(R)\le C'R^b$.  If $b\le0$, then $J\equiv0$.
\end{lemma}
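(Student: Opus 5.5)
We argue by iterating the inequality along dyadic scales. Fix $R\ge R_0$ and set $R_k=2^kR$, $J_k=J(R_k)$. The hypothesis \eqref{eq:riccati-assumption} then reads
\[
 J_{k+1}\ge J_k+\frac{J_k^2}{CR_k^b},
\]
so the sequence $(J_k)$ can grow at most as fast as the polynomial upper bound $J(r)\le C_0r^N$ allows. The idea is that if $J(R)$ is large compared with $R^b$, this quadratic recursion forces growth that is super-polynomial, or even a finite-time blow-up in $k$, which contradicts the polynomial bound.

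Suppose $b>0$ and $J(R)=AR^b$ with $A$ large. We show by induction that $J_k\ge A_kR_k^b$, where $A_0=A$ and
\[
 A_{k+1}=2^{-b}\bigl(A_k+A_k^2/C\bigr).
\]
Indeed, $J_{k+1}\ge J_k(1+J_k/(CR_k^b))\ge A_kR_k^b(1+A_k/C)$, and $R_{k+1}^b=2^bR_k^b$. Once $A\ge A_*:=C(2^{b+1}-1)$, we have $1+A/C\ge2^{b+1}$, so $A_{k+1}\ge 2A_k$. In fact $A_{k+1}\ge 2^{-b}A_k^2/C$, so $\log A_k$ grows geometrically and $A_k\ge \exp(c\,2^k)$ for large $k$. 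Then $J(R_k)\ge \exp(c2^k)R_k^b$, which eventually exceeds $C_0R_k^N$, since $R_k^N=R^N2^{kN}$ is only exponential in $k$. Hence $A<A_*$, that is, $J(R)\le A_*R^b$ for every $R\ge R_0$.

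Suppose $b\le0$ and $J(R)>0$ for some $R\ge R_0$. Since $J$ is nondecreasing, $J_k\ge J(R)=:\delta>0$ for all $k$, and $R_k^b\le R^b$ because $b\le0$. The recursion gives $J_{k+1}-J_k\ge \delta^2/(CR^b)$. This alone yields only linear growth in $k$, so a sharper argument is needed. Write $J_{k+1}\ge J_k+J_k^2/(CR^b)$, which is a Riccati recursion $x_{k+1}\ge x_k+x_k^2/M$ with $M=CR^b$ fixed. Once $x_k\ge M$, the sequence at least doubles at each step and then grows doubly exponentially: $x_{k+1}\ge x_k^2/M$ gives $x_k/M\ge (x_{k_0}/M)^{2^{k-k_0}}$. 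Linear growth guarantees that $x_k$ reaches $2M$ after finitely many steps. From then on $x_k\ge M\,2^{2^{k-k_0}}$, which contradicts the polynomial bound $C_0(2^kR)^N$. Therefore $J\equiv0$ on $[R_0,\infty)$.

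The only delicate point is the bookkeeping of the constants $C$ and $2^b$ in the $b>0$ case, specifically the choice of the threshold $A_*$ so that the induction is self-sustaining. Everything else is elementary.
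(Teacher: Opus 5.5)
Your proof is correct and follows the paper's argument. In both, you normalize $J(2^kR)$ by $(2^kR)^b$ and pick a threshold depending only on $C$ and $b$, above which the quadratic term forces $z_{k+1}\ge z_k^2$; the resulting double-exponential growth contradicts the polynomial bound, and for $b\le0$ the linear increments carry $J_k$ up to that threshold. The only differences are cosmetic: you insert an intermediate doubling step where the paper chooses the threshold so that squaring applies from $k=0$, and you treat $b<0$ directly via $R_k^b\le R^b$ where the paper reduces it to $b=0$ using $R^b\le R_0^b$.
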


\begin{proof}
Assume first that $b>0$, fix $R\ge R_0$, and set
\[
 y_k=(2^kR)^{-b}J(2^kR).
\]
Then \eqref{eq:riccati-assumption} implies
\begin{equation}\label{eq:riccati-recurrence}
 y_{k+1}\ge2^{-b}(y_k+cy_k^2).
\end{equation}
Put $a=2^{-b}c$ and choose $A=2/a$.  If $y_0\ge A$, then
$z_k=ay_k$ satisfies
\[
 z_0\ge2,
 \qquad
 z_{k+1}\ge z_k^2.
\]
Inductively, $z_k\ge2^{2^k}$.  On the other hand, the assumed
polynomial upper bound $J(s)\le C_0s^M$ gives
\[
 y_k\le C_0R^{M-b}2^{k(M-b)},
\]
which is at most exponential in $k$.  This contradiction proves
$y_0<A$, uniformly in $R$, and hence $J(R)\le AR^b$.

If $b=0$ and $J(R)>0$ at some $R$, then
\[
 J(2R)\ge J(R)+cJ(R)^2.
\]
Writing $J_k=J(2^kR)$, monotonicity gives
$J_{k+1}-J_k\ge cJ_0^2$, so $J_k\to\infty$.  Once
$cJ_k\ge2$, the quantities $z_k=cJ_k$ obey
$z_{k+1}\ge z_k^2$ and therefore grow double-exponentially, again
contradicting the polynomial upper bound.  Thus $J(R)=0$ for every
$R\ge R_0$.  If $b<0$, then $R^b\le R_0^b$ on this range, reducing
the assertion to the case $b=0$ with a modified constant.
\end{proof}

\subsection{The initial defect dimension}

\begin{lemma}[The lower-power bound]\label{lem:lower-power-initial}
For every $a\in\HH^n$ and $R\ge1$,
\begin{equation}\label{eq:lower-power-initial}
 \int_{B_R(a)}W^{q_-}\dx
 \le
 \begin{cases}
 CR^4,&2\le n\le4,\\
 CR^n,&n>4.
 \end{cases}
\end{equation}
\end{lemma}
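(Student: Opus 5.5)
The plan is to compare the exponent $q_-=2-\frac2n$ with the Morrey exponent $p=1+\frac2n$ and to use only the all-centre critical Morrey bound of \cref{prop:critical-morrey}, together with boundedness of $W$ when needed. One checks that
\[
 q_-\le p\iff 2-\tfrac2n\le1+\tfrac2n\iff n\le4,
\]
which explains the split into the two cases of \eqref{eq:lower-power-initial}.

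\emph{Case $2\le n\le4$.} Here $0<q_-\le p$, so I will apply H\"older's inequality with exponent $p/q_-\ge1$ on $B_R(a)$:
\[
 \int_{B_R(a)}W^{q_-}
 \le\Bigl(\int_{B_R(a)}W^p\Bigr)^{q_-/p}|B_R(a)|^{1-q_-/p}
 \le C\,R^{nq_-/p}\,R^{Q(1-q_-/p)},
\]
using \eqref{eq:critical-morrey} and \eqref{eq:volume-growth}. The resulting exponent is
\[
 Q-(Q-n)\frac{q_-}{p}=(2n+2)-(n+2)\cdot\frac{n}{n+2}\Bigl(2-\frac2n\Bigr)=2n+2-(2n-2)=4.
\]
This gives $CR^4$ with a purely dimensional constant, valid for every $R>0$ and in particular for $R\ge1$.

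\emph{Case $n>4$.} Here $q_->p$, so the pointwise bound $W^{q_-}\le\|W\|_\infty^{\,q_--p}W^p$ holds. This uses that $W$ is bounded, which is the standing assumption of this section. Integrating and applying \eqref{eq:critical-morrey} gives
\[
 \int_{B_R(a)}W^{q_-}\le C\|W\|_\infty^{\,q_--p}R^n .
\]
The constant now depends on $\|W\|_\infty$, consistent with the constants in \cref{cor:defect-pointwise} and \cref{prop:annular-defect}.

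There is no real obstacle. The only points to watch are applying H\"older in the correct direction, which requires $q_-\le p$ and is exactly the case split, and checking the exponent arithmetic that yields $4$. The restriction $R\ge1$ is not needed in the first case. In the second case it only matters if one wants a single formula covering both regimes.
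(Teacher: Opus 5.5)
Your proposal is correct and is essentially the paper's proof: H\"older's inequality with exponent $p/q_-$ against the critical Morrey bound, with the exponent computation $Q-(Q-n)q_-/p=4$, when $2\le n\le4$, and the pointwise bound $W^{q_-}\le\|W\|_\infty^{q_--p}W^p$ followed by the Morrey bound when $n>4$. Your remarks that the first case holds for all $R>0$ and that the second-case constant depends on $\|W\|_\infty$ are accurate.
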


\begin{proof}
If $2\le n\le4$, then $q_-\le p$.  H\"older's inequality, \cref{prop:critical-morrey}, and $|B_R|\asymp R^Q$ give
\[
 \int_{B_R(a)}W^{q_-}
 \le\left(\int_{B_R(a)}W^p\right)^{q_-/p}
 |B_R|^{1-q_-/p}
 \le CR^4,
\]
because
\[
 n\frac{q_-}{p}+Q\left(1-\frac{q_-}{p}\right)
 =Q-(Q-n)\frac{q_-}{p}
 =2n+2-(n+2)\frac{2(n-1)}{n+2}=4.
\]
If $n>4$, then $q_->p$, and boundedness gives
\[
 \int_{B_R(a)}W^{q_-}
 \le\|W\|_\infty^{q_--p}\int_{B_R(a)}W^p
 \le CR^n.
\]
\end{proof}

\begin{proposition}[Initial codimension-two budget]\label{prop:initial-defect-growth}
For a bounded solution $W$,
\begin{equation}\label{eq:initial-energy-growth}
 \sup_a\int_{B_R(a)}W^{p+1}\dx\le CR^n.
\end{equation}
Moreover,
\begin{equation}\label{eq:initial-defect-growth}
 n=2:\ \fA_f\equiv0;
 \qquad
 n\ge3:\ \sup_aJ_a(R)\le CR^{n-2}.
\end{equation}
\end{proposition}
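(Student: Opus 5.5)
The argument has three steps: an energy bound from the Morrey estimate and boundedness, then the annular estimate combined with the discrete Riccati lemma.

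\textbf{Energy bound.} For \eqref{eq:initial-energy-growth} we interpolate between the Morrey estimate and boundedness. Since $p+1>p$ and $W\le\|W\|_\infty$,
\[
 \int_{B_R(a)}W^{p+1}\le\|W\|_\infty\int_{B_R(a)}W^p\le C\|W\|_\infty R^n
\]
by \cref{prop:critical-morrey}. This holds for every $R>0$, uniformly in $a$.

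\textbf{Annular estimate.} Next we bound the bracket in \cref{prop:annular-defect} for $R\ge1$, using \cref{lem:lower-power-initial}.
\begin{itemize}
\item If $2\le n\le4$, then $R^{-4}\int_{B_{2R}}W^{q_-}\le C$.
\item If $n>4$, then $R^{-4}\int_{B_{2R}}W^{q_-}\le CR^{n-4}\le CR^n$.
\end{itemize}
In both cases the bracket is at most $CR^n$, since $R^n\ge1$. Hence, uniformly in $a$,
\[
 J_a(R)^2\le CR^{n-2}\bigl(J_a(2R)-J_a(R)\bigr)\qquad(R\ge1).
\]

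\textbf{Riccati step.} We apply \cref{lem:discrete-riccati} to $J=J_a$ with $b=n-2$ and $R_0=1$. Two hypotheses must be checked.
\begin{itemize}
\item $J_a$ is nondecreasing, because $\fA_f\ge0$.
\item $J_a$ has a polynomial upper bound: by \cref{cor:defect-pointwise}, $\fA_f\le CW^{q_-}$, so \cref{lem:lower-power-initial} gives $J_a(R)\le CR^{\max(4,n)}$.
\end{itemize}
For $n\ge3$ we have $b>0$, and the lemma gives $J_a(R)\le C'R^{n-2}$. The constant $C'=A$ depends only on the constant in the Riccati inequality and not on the polynomial bound, so it is uniform in $a$.

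For $n=2$ we have $b=0$, and the lemma gives $J_a\equiv0$ on $[1,\infty)$. Hence $\fA_f=0$ almost everywhere. Since $\fA_f$ is continuous, $\fA_f\equiv0$.

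\textbf{Main obstacle.} The only delicate point is uniformity in $a$ in the Riccati conclusion. This needs the explicit form $A=2/a$, with $a=2^{-b}c$, from the lemma's proof, and that constant does not involve the polynomial bound. Everything else is bookkeeping of exponents.
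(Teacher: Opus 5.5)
Your proof is correct and follows the paper's own argument. You combine boundedness with the Morrey estimate to get the energy bound, insert it and \cref{lem:lower-power-initial} into \cref{prop:annular-defect}, and then apply \cref{lem:discrete-riccati} with $b=n-2$, using the polynomial bound from \cref{cor:defect-pointwise}. Your two added observations fill in details the paper leaves implicit: the Riccati constant $A$ does not depend on the polynomial bound, which gives uniformity in $a$; and for $n=2$, continuity upgrades $\fA_f=0$ almost everywhere to $\fA_f\equiv0$.
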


\begin{proof}
Boundedness and \cref{prop:critical-morrey} give
\[
 \int_{B_R(a)}W^{p+1}
 \le\|W\|_\infty\int_{B_R(a)}W^p
 \le CR^n.
\]
Insert this and \eqref{eq:lower-power-initial} into \eqref{eq:annular-defect}.  The term containing $W^{q_-}$ is lower order, and therefore
\[
 J_a(R)^2\le CR^{n-2}\bigl(J_a(2R)-J_a(R)\bigr).
\]
The polynomial upper bound required in \cref{lem:discrete-riccati} follows from \cref{cor:defect-pointwise,lem:lower-power-initial}.  Apply that lemma with $b=n-2$.
\end{proof}

\section{Defect quantization and height-level covering}\label{sec:defect-quantization}

Let $W$ be a fixed bounded exact-Riesz/Morrey solution of \eqref{eq:main-equation}.  Unless stated otherwise, we assume that $W$ is not a bubble.

\subsection{Scaling of the defect}

For $a\in\HH^n$, define the natural scale
\begin{equation}\label{eq:mu-a}
 \mu(a)=W(a)^{-1/n}
\end{equation}
and the normalized solution
\begin{equation}\label{eq:Va-def}
 V_a(\zeta)=\mu(a)^nW(a\delta_{\mu(a)}\zeta).
\end{equation}
Then $V_a(e)=1$.  We call \eqref{eq:Va-def} $L$-good if
\begin{equation}\label{eq:L-good}
 0<V_a\le2^n\quad\text{on }B_L(e).
\end{equation}

\begin{lemma}[Defect scaling]\label{lem:defect-scaling}
For $\mu=\mu(a)$,
\begin{equation}\label{eq:defect-density-scaling}
 \fA_{f_{V_a}}(\zeta)
 =\mu^{Q+2}\fA_{f_W}(a\delta_\mu\zeta).
\end{equation}
Consequently,
\begin{equation}\label{eq:defect-integral-scaling}
 \int_{B_L(e)}\fA_{f_{V_a}}\,d\zeta
 =\mu^2\int_{B_{L\mu}(a)}\fA_{f_W}\dx.
\end{equation}
\end{lemma}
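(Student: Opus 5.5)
The identity is a direct scaling computation on the logarithmic variable $f$ of \eqref{eq:f-def}. I would carry it out in three steps.

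\emph{Step 1: scaling of $f$.} Write $f_W=\frac1n\log W-\log2$ and $f_{V_a}=\frac1n\log V_a-\log 2$. Set $\Psi(\zeta)=a\delta_\mu\zeta$. Then \eqref{eq:Va-def} gives
\[
 f_{V_a}(\zeta)=\log\mu+f_W(\Psi(\zeta)).
\]
So $f_{V_a}$ is a left translate and dilate of $f_W$, plus an additive constant.

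\emph{Step 2: scaling of each tensor.} The fields $Z_\alpha$ are left invariant and homogeneous of degree one under $\delta_\mu$, and $\partial_t$ is left invariant of degree two. Hence a derivative of $f_{V_a}$ of homogeneous order $k\ge1$ equals $\mu^k$ times the corresponding derivative of $f_W$ at $\Psi(\zeta)$; the additive constant $\log\mu$ is annihilated. For the zeroth-order terms, $e^{2f_{V_a}}=\mu^2e^{2f_W}\circ\Psi$. I would then check that each quantity in \eqref{eq:DE-tensors}--\eqref{eq:gG-def} is homogeneous:
\begin{itemize}
\item $D_{\alpha\beta}$ and $E_{\alpha\bar\beta}$ have weight $2$;
\item $g=|\partial f|^2+e^{2f}-\ii f_0$ has weight $2$, since all three terms scale by $\mu^2$;
\item $D_\alpha$, $E_\alpha$ and $G_\alpha=\ii f_{0\alpha}+gf_\alpha$ have weight $3$.
\end{itemize}
This consistency of weights is the only point that needs care, and it is where the equation's scaling $n(p-1)=2$ is implicitly used through the $e^{2f}$ terms.

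\emph{Step 3: assembling the density.} Inside the bracket of \eqref{eq:defect-density}, every term has weight $6$: $e^{2f}|D_{\alpha\beta}|^2$ scales as $\mu^{2+4}$, and $|D_\alpha|^2$ as $\mu^6$, with the other terms the same. The prefactor contributes $e^{2(n-1)f_{V_a}}=\mu^{2(n-1)}e^{2(n-1)f_W}\circ\Psi$. The total weight is therefore $2n-2+6=2n+4=Q+2$, which gives \eqref{eq:defect-density-scaling}.

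For \eqref{eq:defect-integral-scaling}, I would change variables $\xi=\Psi(\zeta)$. Haar measure is left invariant and satisfies $d\xi=\mu^Q\,d\zeta$. Left translation and dilation map $B_L(e)$ onto $B_{L\mu}(a)$, because $d(a,a\delta_\mu\zeta)=\mu\rho(\zeta)$. The integral thus acquires the factor $\mu^{Q+2}\mu^{-Q}=\mu^2$.

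No genuine obstacle is expected; the main risk is bookkeeping the weight of $f_0$ and $f_{0\alpha}$ correctly.
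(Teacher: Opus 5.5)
Your proposal is correct and follows the paper's own argument. You use the same shift identity $f_{V_a}=f_W\circ(a\delta_\mu\,\cdot\,)+\log\mu$, the same weight count (one per horizontal derivative, two per $\partial_t$, two for $e^{2f}$, for a total of $2(n-1)+6=Q+2$), and the same Haar change of variables $d\xi=\mu^Q\,d\zeta$ mapping $B_L(e)$ onto $B_{L\mu}(a)$. You simply spell out the individual weights of $D_{\alpha\beta}$, $E_{\alpha\bar\beta}$, $g$, $D_\alpha$, $E_\alpha$ and $G_\alpha$, which the paper leaves to the reader.
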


\begin{proof}
The logarithmic factors satisfy
\[
 f_{V_a}(\zeta)=f_W(a\delta_\mu\zeta)+\log\mu.
\]
A horizontal derivative has weight one and $\partial_t$ has weight two.  Substitution in \eqref{eq:DE-tensors}--\eqref{eq:defect-density} shows that every summand has total weight $Q+2$, proving \eqref{eq:defect-density-scaling}.  Haar measure contributes $\mu^{-Q}$ after the change of variables, which gives \eqref{eq:defect-integral-scaling}.
\end{proof}

\subsection{A uniform natural-scale quantum}

\begin{proposition}[Uniform defect quantum]\label{prop:defect-quantum}
There exist $L_0<\infty$ and $\varepsilon_0>0$, depending on the fixed solution $W$, such that every $L_0$-good normalization at $a$ satisfies
\begin{equation}\label{eq:defect-quantum}
 \int_{B_{L_0\mu(a)}(a)}\fA_{f_W}\dx
 \ge\varepsilon_0\mu(a)^{-2}.
\end{equation}
\end{proposition}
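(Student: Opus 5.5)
By \cref{lem:defect-scaling}, \eqref{eq:defect-quantum} is equivalent to the scale-free statement
\[
 \int_{B_{L_0}(e)}\fA_{f_{V_a}}\,d\zeta\ge\varepsilon_0 .
\]
I argue by contradiction and compactness. Suppose no pair $(L_0,\varepsilon_0)$ works. Then there are $L_j\to\infty$ and points $a_j$ with the following properties: each normalization $V_j:=V_{a_j}$ is $L_j$-good, so $V_j(e)=1$ and $V_j\le2^n$ on $B_{L_j}(e)$, and
\[
 \int_{B_{L_j}(e)}\fA_{f_{V_j}}\to0 .
\]
Each $V_j=\cT_{a_j,\mu(a_j)}W$ is an exact-Riesz/Morrey solution with the same Morrey constant, because these properties are invariant under the affine action. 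It is not a bubble, since membership in $\cM_{\rm bub}$ is invariant and $W$ is not a bubble. By \cref{lem:expanding-compactness}, a subsequence converges in $C^\infty_{H,\loc}$ to a positive entire solution $V_\infty$ with $V_\infty(e)=1$ and $V_\infty\le2^n$. By \cref{prop:riesz-compactness}, $V_\infty$ is again an exact-Riesz/Morrey solution.

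Next I identify $V_\infty$. Locally uniform convergence of derivatives up to homogeneous order three, which is what \cref{lem:expanding-compactness} provides, gives $f_{V_j}\to f_{V_\infty}$ in $C^3_{H,\loc}$. The density $\fA$ is a continuous expression in these derivatives and in $e^{f}$. Since $V_\infty$ is positive, the logarithms converge. Hence for every fixed $R$,
\[
 \int_{B_R(e)}\fA_{f_{V_\infty}}=\lim_j\int_{B_R(e)}\fA_{f_{V_j}}=0 .
\]
So $\fA_{f_{V_\infty}}\equiv0$, and \cref{prop:zero-defect-rigidity}, which is the pointwise rigidity \eqref{eq:zero-defect-bubble} requiring no decay, gives $V_\infty=U$ for some $U\in\cM_{\rm bub}$. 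This is the step where the bounded classification is not used, as the organization section insists.

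Now $(V_j)$ is a uniform exact-Riesz/Morrey family converging in $C^\infty_{H,\loc}$ to a Jerison--Lee bubble. \Cref{thm:intro-isolation} therefore says $V_j$ is a bubble for all large $j$, which is the contradiction. This gives $L_0$ and $\varepsilon_0$. They depend on $W$ only through $\|W\|_\infty$ and the Morrey constant, and the Morrey constant is dimensional.

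The main obstacle is the passage to the limit in $\fA$. One must check that the $C^\infty_{H,\loc}$ convergence controls $f_{0\alpha}$, a derivative of homogeneous order three, locally uniformly. Given the $C_H^{m,\alpha}$ bounds in the proof of \cref{lem:expanding-compactness} and the uniform positive lower bound from Harnack on each fixed ball, I expect this to be routine.

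I also need uniformity in the choice of normalization point. The contradiction sequence may have $a_j$ escaping to infinity, or $\mu(a_j)$ degenerating. This is harmless because every ingredient — the Morrey constant, the exact representation, and goodness — is phrased in the normalized frame and is invariant under the affine action.
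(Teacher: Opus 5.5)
Your proposal is correct and follows the paper's proof essentially step for step. The argument runs as follows: take a contradiction sequence of $j$-good normalizations with vanishing normalized defect, pass to a limit by expanding-window compactness together with exact-Riesz compactness, identify the limit as a bubble by zero-defect rigidity (not the bounded classification), and conclude with a contradiction to sequential isolation, since each $V_j$ is an affine image of the non-bubble $W$.

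One small caveat concerns your closing remark that $L_0,\varepsilon_0$ depend only on $\|W\|_\infty$ and the Morrey constant. A fixed-$W$ contradiction argument does not establish that. Running the same argument over a sequence of solutions $W_j$, which sequential isolation permits, would in fact make the constants dimensional. In any case the proposition only asks for dependence on $W$.
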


\begin{proof}
Suppose that no such pair $(L_0,\varepsilon_0)$ exists.  For each integer $j$ there is a point $a_j$ such that the natural normalization $V_j=V_{a_j}$ is $j$-good and
\begin{equation}\label{eq:vanishing-normalized-defect}
 \int_{B_j(e)}\fA_{f_{V_j}}<\frac1j.
\end{equation}
The sequence satisfies $V_j(e)=1$ and $V_j\le2^n$ on expanding balls.  By \cref{lem:expanding-compactness},
\[
 V_j\longrightarrow V_\infty
 \quad\text{in }C^\infty_{H,\loc}(\HH^n)
\]
after taking a subsequence.  Affine invariance preserves the uniform Morrey constant, and \cref{prop:riesz-compactness} passes the exact global representation to $V_\infty$.  From \eqref{eq:vanishing-normalized-defect} and local smooth convergence,
\[
 \fA_{f_{V_\infty}}\equiv0.
\]
Thus \cref{prop:zero-defect-rigidity} implies $V_\infty\in\cM_{\rm bub}$.

Every $V_j$ is an affine transform of the same non-bubble $W$ and is therefore a non-bubble.  The sequence $(V_j)$ is a uniform exact-Riesz/Morrey family converging locally to a bubble, contradicting \cref{thm:intro-isolation}.  This proves the proposition.
\end{proof}

\begin{remark}\label{rem:quantum-quantifiers}
The expanding radius in the contradiction is essential.  A fixed finite $L$-good window need not be close to a bubble.  The failure of a uniform quantum permits the simultaneous choice $L_j\to\infty$ and normalized defect tending to zero.
\end{remark}

\subsection{Packing a fixed height layer}

Assume that for some $d\in[0,n-2]$,
\begin{equation}\label{eq:defect-growth-d}
 \sup_{a\in\HH^n}J_a(R)\le C_JR^d
 \qquad(R\ge1).
\end{equation}
Fix $a_0\in\HH^n$, $R\ge1$, and a dyadic value $\mu\le R$.  Define
\begin{equation}\label{eq:height-layer}
 E_\mu
 =\left\{x\in B_R(a_0):
 \mu\le W(x)^{-1/n}<2\mu
 \right\}.
\end{equation}

\begin{lemma}[Same-level packing]\label{lem:same-level-packing}
Under \eqref{eq:defect-growth-d},
\begin{equation}\label{eq:level-volume}
 |E_\mu|\le CR^d\mu^{Q+2}
 \qquad(\mu\le R).
\end{equation}
\end{lemma}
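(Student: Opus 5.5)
Every point $x\in E_\mu$ has natural scale $\mu(x)\in[\mu,2\mu)$. The plan is to pick, for each such point, a nearby doubling-good centre $y$, cover $E_\mu$ by the associated balls, extract a disjoint subfamily, and charge each ball the defect quantum of \cref{prop:defect-quantum}. The budget \eqref{eq:defect-growth-d} then bounds the number of balls, and hence $|E_\mu|$. Throughout, $L_0,\varepsilon_0$ are the constants of \cref{prop:defect-quantum}.

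\textbf{Step 1: good centres.} For $x\in E_\mu$, apply \cref{lem:doubling} to $M=W^{1/n}$ with $k=L_0$. This gives $y$ with
\[
 M(y)\ge M(x),\qquad d(x,y)\le 2L_0\mu(x)\le 4L_0\mu .
\]
By \eqref{eq:natural-good}, the normalization $V_y$ is $L_0$-good. Hence \eqref{eq:defect-quantum} gives
\[
 \int_{B_{L_0\mu(y)}(y)}\fA_{f_W}\ge\varepsilon_0\mu(y)^{-2}.
\]
We also have $\mu(y)\le\mu(x)<2\mu$.

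\textbf{Step 2: a lower bound on $\mu(y)$ (main obstacle).} The quantum must be charged to balls comparable to scale $\mu$. So we need $\mu(y)\ge c\mu$, i.e.\ $W(y)\le C W(x)$, and the doubling selection alone does not give this. First try Harnack: \cref{lem:relative-derivatives} gives $|\nabla_H\log W|\le C$, so $W(y)\le e^{Cd(x,y)}W(x)$. This suffices only when $\mu\lesssim1$, since $d(x,y)\lesssim\mu$.

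For $\mu\ge1$ the argument must change. Work with $\mu(x)$ itself: $W(x)\asymp\mu^{-n}$ is small there, and the gradient bound controls $W$ on $B_1(x)$. Instead of the doubling point, use the Morrey/Green lower bound $W\ge c\mu'^{-n}$ propagated on balls of radius comparable to $\mu$. Equivalently, modify the selection so that the search for $y$ is restricted to points of the same dyadic height layer (or one layer up), where goodness at scale $L_0\mu$ follows from the Harnack chain on $B_{L_0\mu}(x)$ together with boundedness of $W$.

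Either way, the goal is points $y$ with
\[
 c\mu\le\mu(y)\le2\mu,\qquad d(x,y)\le C\mu,
\]
each carrying defect at least $c\varepsilon_0\mu^{-2}$ on $B_{C\mu}(y)$. I expect this step to be where the real work lies.

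\textbf{Step 3: covering and counting.} The balls $B_{C'\mu}(x)$, $x\in E_\mu$, cover $E_\mu$, where $C'$ is large enough that $B_{C'\mu}(x)\supset B_{C\mu}(y)$. By the Vitali lemma in the doubling metric space $(\HH^n,d)$, we can select centres $x_1,\dots,x_N$ with the balls $B_{C'\mu}(x_i)$ disjoint and $E_\mu\subset\bigcup_i B_{5C'\mu}(x_i)$. Consequently
\[
 |E_\mu|\le N\,\omega_Q(5C'\mu)^Q .
\]
Since $\mu\le R$, all these balls lie in $B_{(1+C'')R}(a_0)$. Summing the disjoint quanta and using \eqref{eq:defect-growth-d} gives
\[
 Nc\varepsilon_0\mu^{-2}\le J_{a_0}\bigl((1+C'')R\bigr)\le CR^d .
\]
Hence $N\le CR^d\mu^2$, and therefore
\[
 |E_\mu|\le CR^d\mu^{Q+2},
\]
which is \eqref{eq:level-volume}.
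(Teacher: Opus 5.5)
\textbf{Verdict.} Your argument is correct once Step~2 is deleted, and Steps~1 and~3 are the paper's proof. The paper uses a maximal $A_*L_0\mu$-separated subset of $E_\mu$ with $A_*>12$ where you use Vitali; for balls of equal radius these are the same device.

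\textbf{Why Step~2 is unnecessary.} The obstacle you identify there does not exist. The bound $\mu(y)\le\mu(x)<2\mu$, which you already record in Step~1, points in the favourable direction for both uses of the quantum. First, \cref{prop:defect-quantum} gives
\[
 \int_{B_{L_0\mu(y)}(y)}\fA_{f_W}\dx\ge\varepsilon_0\mu(y)^{-2}>\tfrac14\varepsilon_0\mu^{-2}.
\]
Second, the ball carrying this defect has radius $L_0\mu(y)<2L_0\mu$. Together with $d(x,y)\le4L_0\mu$, this places it inside $B_{6L_0\mu}(x)$. Take $C'=6L_0$ in Step~3. Then the defect balls inherit disjointness from the selected balls $B_{C'\mu}(x_i)$, and they all lie in $B_{(1+6L_0)R}(a_0)$ because $\mu\le R$. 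Your count $N\le CR^d\mu^2$ then follows verbatim. A smaller $\mu(y)$ only packs more defect into a smaller ball, so no lower bound on $\mu(y)$ is needed anywhere.

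\textbf{Why not to complete Step~2.} You should drop Step~2 rather than try to finish it, because the bound it seeks is not available. The doubling selection of \cref{lem:doubling} gives no uniform upper bound on $M(y)/M(x)$: it may pass through many doublings within distance $2L_0\mu(x)$. The gradient bound of \cref{lem:relative-derivatives} only yields $M(y)/M(x)\le e^{C\mu}$, as you observe. The remedies you sketch for $\mu\ge1$ do not close this.
\begin{itemize}
\item The Harnack constant for $LW=n^2W^{p-1}W$ on a ball of radius $r$ depends on $r^2\|W\|_\infty^{p-1}$, and it degenerates when $r=L_0\mu\to\infty$.
\item Restricting the search to one height layer gives up exactly the $L_0$-goodness on which \cref{prop:defect-quantum} depends.
\end{itemize}
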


\begin{proof}
Choose a maximal $A_*L_0\mu$-separated set $\{x_i\}_{i=1}^N\subset E_\mu$, where $A_*>12$ is fixed.  The maximality implies that the balls $B_{A_*L_0\mu}(x_i)$ cover $E_\mu$.

Apply \cref{lem:doubling} on the complete space $\HH^n$ to $M=W^{1/n}$, starting at each $x_i$ with $k=L_0$.  We obtain $y_i$ satisfying
\begin{equation}\label{eq:level-displacement}
 M(y_i)\ge M(x_i),
 \qquad
 d(x_i,y_i)\le\frac{2L_0}{M(x_i)}\le4L_0\mu.
\end{equation}
If $\nu_i=M(y_i)^{-1}$, then the normalization at $y_i$ is $L_0$-good and
\begin{equation}\label{eq:nu-upper}
 \nu_i\le M(x_i)^{-1}<2\mu.
\end{equation}
The separation of the $x_i$, \eqref{eq:level-displacement}, and $A_*>12$ show that the balls $B_{L_0\nu_i}(y_i)$ are pairwise disjoint.

By \cref{prop:defect-quantum} and \eqref{eq:nu-upper}, each such ball
carries at least $c\mu^{-2}$ defect.  Since $\mu\le R$, all these
balls lie in $B_{C_{L_0}R}(a_0)$.  Therefore
\eqref{eq:defect-growth-d} gives
\begin{equation}\label{eq:number-balls}
 Nc\mu^{-2}
 \le\sum_{i=1}^N\int_{B_{L_0\nu_i}(y_i)}\fA_f
 \le J_{a_0}(C_{L_0}R)
 \le CR^d.
\end{equation}
Thus $N\le CR^d\mu^2$.  The covering property and volume growth now yield
\[
 |E_\mu|
 \le NC(A_*L_0\mu)^Q
 \le CR^d\mu^{Q+2}.
\]
\end{proof}

\subsection{Interpolation of the source and defect budgets}

On $E_\mu$ one has $W\le\mu^{-n}$.  The Morrey source budget gives
\begin{equation}\label{eq:source-layer-budget}
 \int_{E_\mu}W^{p+1}\dx
 \le\mu^{-n}\int_{B_R(a_0)}W^p\dx
 \le CR^n\mu^{-n}.
\end{equation}
On the other hand, $n(p+1)=Q$, and \cref{lem:same-level-packing} gives
\begin{equation}\label{eq:defect-layer-budget}
 \int_{E_\mu}W^{p+1}\dx
 \le\mu^{-Q}|E_\mu|
 \le CR^d\mu^2.
\end{equation}
Combining the two,
\begin{equation}\label{eq:two-layer-budgets}
 \int_{E_\mu}W^{p+1}\dx
 \le C\min\{R^d\mu^2,R^n\mu^{-n}\}
 \qquad(\mu\le R).
\end{equation}

\begin{proposition}[Energy improvement]\label{prop:energy-improvement}
If \eqref{eq:defect-growth-d} holds for some $d\in[0,n-2]$, then
\begin{equation}\label{eq:energy-improvement}
 \sup_{a_0\in\HH^n}\int_{B_R(a_0)}W^{p+1}\dx
 \le CR^{\alpha(d)},
 \qquad
 \alpha(d)=\frac{n(d+2)}{n+2}.
\end{equation}
\end{proposition}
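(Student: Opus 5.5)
The plan is to sum the two-sided layer budget \eqref{eq:two-layer-budgets} over dyadic heights and to treat the large-height region separately. Fix $a_0$ and $R\ge1$. Split $B_R(a_0)$ into the set where $W(x)^{-1/n}\ge R$ (heights $W\le R^{-n}$) and the dyadic layers $E_\mu$ with $\mu\le R$. Because $W$ is bounded, $W^{-1/n}\ge\|W\|_\infty^{-1/n}$. So only dyadic $\mu$ in the range $[c_W,R]$ occur, and the sum over layers is a finite geometric-type sum.

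\emph{Low region.} On $\{W\le R^{-n}\}\cap B_R(a_0)$ we have
\[
 \int W^{p+1}\le R^{-n}\int_{B_R(a_0)}W^p\le C_M.
\]
This is bounded and hence harmless, since $\alpha(d)\ge 2n/(n+2)>0$. Layers with $\mu$ slightly above $R$ (up to $2R$) are handled the same way, or are absorbed into this estimate.

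\emph{Layer sum.} Sum $C\min\{R^d\mu^2,R^n\mu^{-n}\}$ over dyadic $\mu\le R$. The two bounds cross at $\mu_*$ with $\mu_*^{n+2}=R^{n-d}$, that is, $\mu_*=R^{(n-d)/(n+2)}$, and $\mu_*\le R$ because $d\ge0$.
\begin{itemize}
\item For $\mu\le\mu_*$, use $R^d\mu^2$. This is geometric increasing, so its sum is $\le CR^d\mu_*^2$.
\item For $\mu_*<\mu\le R$, use $R^n\mu^{-n}$. This is geometric decreasing, so its sum is $\le CR^n\mu_*^{-n}$.
\end{itemize}
Both bounds equal
\[
 R^{d+2(n-d)/(n+2)}=R^{(nd+2n)/(n+2)}=R^{\alpha(d)}.
\]
Adding the bounded low-region contribution and using $R\ge1$ gives \eqref{eq:energy-improvement}. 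All constants come from \cref{lem:same-level-packing} and \cref{prop:critical-morrey}, so the bound is uniform in $a_0$.

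\emph{Main obstacle.} The delicate point is the validity of \cref{lem:same-level-packing} across the whole range $\mu\le R$. I would check that its constant does not depend on $\mu$: the enlarged ball $B_{C_{L_0}R}(a_0)$ and the estimate $J_{a_0}(C_{L_0}R)\le CR^d$ require $C_{L_0}R\ge1$, which holds. I would also check that tiny $\mu$ cause no trouble. Such $\mu$ are excluded by boundedness anyway, and their contribution in any case sums geometrically to something dominated by the $\mu_*$ term. The endpoint $d$ enters only through $\mu_*\le R$, so the restriction $d\le n-2$ is not needed here beyond ensuring $d\ge0$. The "consequently" clause of the theorem then follows by combining this with \cref{prop:annular-defect} and \cref{lem:discrete-riccati}, which convert an energy exponent $D$ into a defect exponent $D-2$.
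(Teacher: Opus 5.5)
Your proposal is correct and is essentially the paper's proof: the same dyadic height-layer decomposition, the same balancing scale $\mu_*^{n+2}=R^{n-d}$ with the two geometric sums each bounded by $CR^{\alpha(d)}$, and the same bound $R^{-n}\int_{B_R(a_0)}W^p\le C$ on the low-value region $\{W^{-1/n}>R\}$. Your side remark is also right: the geometric sums do not need $\mu_*\ge1$ (and hence do not need $d\le n-2$), even though the paper invokes that condition.
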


\begin{proof}
Partition the range of $W^{-1/n}$ into dyadic layers.  Balance the two terms in \eqref{eq:two-layer-budgets} at
\begin{equation}\label{eq:balancing-scale}
 \mu_*^{n+2}=R^{n-d}.
\end{equation}
Because $0\le d\le n-2$ and $R\ge1$, one has $1\le\mu_*\le R$ up to a harmless adjustment of the dyadic origin.  Sum the first term in \eqref{eq:two-layer-budgets} over $\mu\le\mu_*$ and the second over $\mu_*\le\mu\le R$.  Both sums are geometric and are bounded by
\[
 CR^d\mu_*^2
 =CR^{n(d+2)/(n+2)}.
\]
The remaining low-value region $\{W^{-1/n}>R\}$ satisfies $W\le R^{-n}$, and hence
\begin{equation}\label{eq:low-value-tail}
 \int_{B_R(a_0)\cap\{W^{-1/n}>R\}}W^{p+1}
 \le R^{-n}\int_{B_R(a_0)}W^p
 \le C.
\end{equation}
This proves \eqref{eq:energy-improvement}, uniformly in $a_0$.
\end{proof}

\begin{remark}
Defect balls are required to be disjoint only within a fixed height layer.  The same defect may be counted again at another height; no cross-layer Carleson packing is needed because the two dyadic sums in the proof of \cref{prop:energy-improvement} are geometric.
\end{remark}

\section{Finite defect-dimension descent and classification}\label{sec:dimension-descent}

We now iterate the two budgets.  The argument takes place for one fixed bounded exact-Riesz/Morrey solution $W$.

\subsection{Defect dimension from energy dimension}

For $D>0$, let $\mathbf E(D)$ denote the all-centre energy bound
\begin{equation}\label{eq:ED}
 \sup_{a\in\HH^n}\int_{B_R(a)}W^{p+1}\dx
 \le C_DR^D
 \qquad(R\ge1).
\end{equation}

\begin{lemma}[Lower power under $\mathbf E(D)$]\label{lem:lower-power-D}
Assume \eqref{eq:ED}.  Then
\begin{equation}\label{eq:lower-power-D}
 \sup_a\int_{B_R(a)}W^{q_-}\dx
 \le
 \begin{cases}
 CR^4,&2\le n\le4,\\
 CR^{4+(n-4)D/n},&n>4.
 \end{cases}
\end{equation}
\end{lemma}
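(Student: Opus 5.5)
For $2\le n\le4$ nothing new is needed: $q_-\le p$, so the bound $CR^4$ is exactly \cref{lem:lower-power-initial}. That lemma used only H\"older's inequality together with \cref{prop:critical-morrey}, and does not use \eqref{eq:ED}. The whole content of the lemma is therefore the case $n>4$, where $q_->p$.

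In the case $n>4$ I will interpolate $W^{q_-}$ between the two available budgets, the Morrey source bound $\int_{B_R(a)}W^p\le CR^n$ and the energy bound \eqref{eq:ED}. Since $p<q_-<p+1$ when $n>4$ (indeed $q_-=2-2/n<2+2/n$), we can write
\[
 q_-=\theta p+(1-\theta)(p+1),\qquad \theta=p+1-q_-=1+\frac4n\in(0,1)?
\]
That value exceeds one, so the naive convex combination must be checked. In fact $q_-<p+1$ and $q_->p$ give $\theta=p+1-q_-\in(0,1)$. Computing, $p+1-q_-=2+\tfrac2n-2+\tfrac2n=\tfrac4n$, so $\theta=4/n\in(0,1)$ for $n>4$. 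H\"older's inequality with exponents $1/\theta$ and $1/(1-\theta)$ then gives
\[
 \int_{B_R(a)}W^{q_-}\le\Bigl(\int_{B_R(a)}W^p\Bigr)^{4/n}\Bigl(\int_{B_R(a)}W^{p+1}\Bigr)^{1-4/n}
 \le C\,R^{n\cdot 4/n}R^{D(1-4/n)}=CR^{4+(n-4)D/n},
\]
which is exactly the claimed exponent. Uniformity in $a$ follows because both factors are uniform: the first through the dimensional constant $C_M$, the second through $C_D$.

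The only point needing care is the exponent bookkeeping. Checking that $\theta=4/n$ is the correct weight and that it lies in $(0,1)$ is what separates the cases $n\le4$ and $n>4$. At the endpoint $n=4$ both formulas give $R^4$, consistent with the statement.
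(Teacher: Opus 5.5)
Your proposal is correct and is essentially the paper's proof. For $n\le4$ you reuse the H\"older--Morrey argument of \cref{lem:lower-power-initial}; for $n>4$ you use the convex splitting $q_-=\frac4n p+\frac{n-4}{n}(p+1)$ followed by H\"older against the Morrey and $\mathbf E(D)$ budgets. You should delete the stray line asserting $\theta=1+4/n$ in the write-up, since your own subsequent computation correctly gives $\theta=p+1-q_-=4/n$.
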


\begin{proof}
For $n\le4$, use the proof of \cref{lem:lower-power-initial}.  If $n>4$, the exact identity
\begin{equation}\label{eq:q-interpolation}
 q_-=\frac4n p+\frac{n-4}{n}(p+1)
\end{equation}
and H\"older's inequality give
\begin{align*}
 \int_{B_R(a)}W^{q_-}
 &\le
 \left(\int_{B_R(a)}W^p\right)^{4/n}
 \left(\int_{B_R(a)}W^{p+1}\right)^{(n-4)/n}\\
 &\le CR^{4+(n-4)D/n}.
\end{align*}
\end{proof}

\begin{proposition}[Defect-dimension reduction]\label{prop:defect-dimension-reduction}
Assume $0<D\le n$ and $\mathbf E(D)$.  Then
\begin{equation}\label{eq:defect-dimension-reduction}
 D\le2\ \Longrightarrow\ \fA_f\equiv0;
 \qquad
 D>2\ \Longrightarrow\ \sup_aJ_a(R)\le CR^{D-2}.
\end{equation}
\end{proposition}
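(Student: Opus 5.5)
Insert $\mathbf E(D)$ and \cref{lem:lower-power-D} into the annular inequality \cref{prop:annular-defect}. This reduces the claim to the discrete Riccati lemma \cref{lem:discrete-riccati} with exponent $b=D-2$. The plan mirrors \cref{prop:initial-defect-growth}, which is the case $D=n$.

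\emph{Step 1: bound the bracket.} For $R\ge1$, \eqref{eq:ED} at radius $2R$ gives $\int_{B_{2R}(a)}W^{p+1}\le C(2R)^D$. For the lower-order term, \cref{lem:lower-power-D} gives:
\begin{itemize}
\item when $2\le n\le4$, $R^{-4}\int_{B_{2R}(a)}W^{q_-}\le C$;
\item when $n>4$, it is at most $CR^{(n-4)D/n}\le CR^D$.
\end{itemize}
Since $D>0$ and $R\ge1$, both are $\le CR^D$. Hence the bracket in \eqref{eq:annular-defect} is at most $CR^D$, and
\[
 J_a(R)^2\le CR^{D-2}\bigl(J_a(2R)-J_a(R)\bigr)\qquad(R\ge1),
\]
with $C$ independent of $a$.

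\emph{Step 2: check the Riccati hypotheses.} The function $J_a$ is nondecreasing and nonnegative because $\fA_f\ge0$. The polynomial upper bound comes from \cref{cor:defect-pointwise} together with \cref{lem:lower-power-initial}: $J_a(R)\le C\int_{B_R(a)}W^{q_-}\le CR^{\max(4,n)}$. Apply \cref{lem:discrete-riccati} with $R_0=1$ and $b=D-2$.
\begin{itemize}
\item If $D>2$, then $J_a(R)\le C'R^{D-2}$.
\item If $D\le2$, then $J_a\equiv0$ on $[1,\infty)$ for every $a$, so $\fA_f\equiv0$ on $\HH^n$.
\end{itemize}

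\emph{Step 3: uniformity in $a$.} I must make sure $C'$ does not depend on $a$. In the Riccati proof the threshold $A=2/a$ depends only on $b$ and the constant $c$ in the recurrence. The polynomial bound is used only qualitatively, to obtain a contradiction, so its constants may vary with $a$ without harm. The constant in the Riccati inequality is uniform in $a$ by \cref{prop:annular-defect}. This gives $\sup_aJ_a(R)\le CR^{D-2}$.

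The main obstacle is only this bookkeeping: the lower-order $W^{q_-}$ term must not exceed $R^D$ when $D$ is small and $n\le4$. It does not, because after multiplication by $R^{-4}$ that term is $O(1)$, which is at most $R^D$ for $R\ge1$.
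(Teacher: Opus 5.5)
Your proposal is correct and is essentially the paper's proof: you bound the bracket in \eqref{eq:annular-defect} by $CR^D$ using $\mathbf E(D)$ and \cref{lem:lower-power-D}, then apply \cref{lem:discrete-riccati} with $b=D-2$. Your Step~3 spells out what the paper states in one phrase, namely that the Riccati threshold depends only on $b$ and the centre-independent recurrence constant, while the polynomial bound enters only qualitatively; this gives the claimed uniformity in $a$.
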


\begin{proof}
Insert \eqref{eq:ED} and \eqref{eq:lower-power-D} into the annular estimate \eqref{eq:annular-defect}.  The contribution of the energy term is
\[
 CR^{D-2}\bigl(J_a(2R)-J_a(R)\bigr).
\]
If $n\le4$, the lower-power contribution has coefficient $CR^{-2}$ and is no larger.  If $n>4$, its coefficient has exponent
\begin{equation}\label{eq:lower-power-exponent-comparison}
 -2+\frac{n-4}{n}D
 =(D-2)-\frac{4D}{n}<D-2.
\end{equation}
Therefore
\begin{equation}\label{eq:riccati-D}
 J_a(R)^2
 \le CR^{D-2}\bigl(J_a(2R)-J_a(R)\bigr).
\end{equation}
The polynomial upper bound for $J_a$ follows from \cref{cor:defect-pointwise,lem:lower-power-D}.  Apply \cref{lem:discrete-riccati} with $b=D-2$, uniformly in the centre.
\end{proof}

\subsection{Contraction of the energy dimension}

\begin{corollary}[Energy contraction]\label{cor:energy-contraction}
Suppose $2<D\le n$, $W$ is not a bubble, and $\mathbf E(D)$ holds.  Then
\begin{equation}\label{eq:energy-contraction}
 \mathbf E\left(\frac{n}{n+2}D\right)
\end{equation}
holds.
\end{corollary}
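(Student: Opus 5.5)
The corollary follows by chaining \cref{prop:defect-dimension-reduction} with \cref{prop:energy-improvement}. The standing assumptions are that $W$ is a fixed bounded exact-Riesz/Morrey solution that is not a bubble, with $2<D\le n$, and that $\mathbf E(D)$ holds.

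First apply \cref{prop:defect-dimension-reduction}. Since $0<D\le n$ and $D>2$, it gives the defect budget
\[
 \sup_aJ_a(R)\le CR^{D-2}\qquad(R\ge1).
\]
Set $d=D-2$. The hypothesis $2<D\le n$ gives exactly $d\in(0,n-2]\subset[0,n-2]$, which is the admissible range in \eqref{eq:defect-growth-d}.

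Next apply \cref{prop:energy-improvement}; this is where the non-bubble hypothesis enters. The defect quantum of \cref{prop:defect-quantum}, which underlies \cref{lem:same-level-packing}, is proved only for a non-bubble $W$. With $d=D-2$ the conclusion is
\[
 \sup_{a_0}\int_{B_R(a_0)}W^{p+1}\le CR^{\alpha(D-2)},
 \qquad
 \alpha(D-2)=\frac{n\bigl((D-2)+2\bigr)}{n+2}=\frac{n}{n+2}D,
\]
which is $\mathbf E\bigl(\tfrac{n}{n+2}D\bigr)$ for $R\ge1$.

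The only delicate point is checking that every hypothesis of the two propositions is in force. The polynomial upper bound on $J_a$ needed for the Riccati lemma comes from \cref{cor:defect-pointwise}. The range restriction $d\le n-2$ is what makes the balancing scale $\mu_*$ satisfy $1\le\mu_*\le R$. The constants depend on $W$ through $L_0$, $\varepsilon_0$ and $\|W\|_\infty$, but are uniform in the centre, so the supremum over $a_0$ is legitimate. No new estimate is required.
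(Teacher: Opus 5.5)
Your proposal is correct and is essentially the paper's proof. Both obtain the defect budget with $d=D-2\in(0,n-2]$ from \cref{prop:defect-dimension-reduction}, then apply \cref{prop:energy-improvement} (where the non-bubble hypothesis enters through the defect quantum) and compute $\alpha(D-2)=\frac{n}{n+2}D$.
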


\begin{proof}
By \cref{prop:defect-dimension-reduction}, \eqref{eq:defect-growth-d} holds with $d=D-2$.  Since $D\le n$, one has $0<d\le n-2$.  Apply \cref{prop:energy-improvement}:
\[
 \alpha(D-2)
 =\frac{nD}{n+2}.
\]
\end{proof}

\begin{theorem}[Bounded-profile Liouville theorem]\label{thm:bounded-profile}
Let $n\ge2$.  Every bounded positive exact-Riesz/Morrey solution of \eqref{eq:main-equation} is a Jerison--Lee bubble.
\end{theorem}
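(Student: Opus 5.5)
We argue by contradiction and run the finite descent of the energy dimension. Let $W$ be a bounded positive exact-Riesz/Morrey solution and suppose $W$ is not a bubble. By \cref{prop:initial-defect-growth}, $\mathbf E(n)$ holds, i.e.\ $\sup_a\int_{B_R(a)}W^{p+1}\le CR^n$ for $R\ge1$. Set $D_0=n$.

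\textbf{The case $n=2$.} Here $D_0=2$, and \cref{prop:initial-defect-growth} already gives $\fA_f\equiv0$.

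\textbf{The case $n\ge3$.} We iterate \cref{cor:energy-contraction}. Define $D_{k+1}=\frac{n}{n+2}D_k$ as long as $D_k>2$. Each step preserves the standing hypotheses: $W$ is still the same non-bubble, $2<D_k\le n$, and $\mathbf E(D_k)$ holds. The only bookkeeping point is that the constants $C_{D_k}$ change at each step. This is harmless, because only finitely many steps are taken: $D_k=(n/(n+2))^kn$ decreases geometrically. Hence there is a first index $k_*$ with $D_{k_*}\le2$, and at that index $D_{k_*}>0$. Then \cref{prop:defect-dimension-reduction} applies with $D=D_{k_*}\in(0,2]$ and yields $\fA_f\equiv0$. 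Note that this proposition does not itself need $W$ to be a non-bubble.

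\textbf{Conclusion.} In either case $\fA_f\equiv0$, so \cref{prop:zero-defect-rigidity}, i.e.\ implication \eqref{eq:zero-defect-bubble}, gives $W\in\cM_{\rm bub}$. This contradicts the assumption, so every bounded positive exact-Riesz/Morrey solution is a bubble.

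\textbf{Checks on the hypotheses.} The descent uses \cref{prop:energy-improvement}, hence \cref{prop:defect-quantum}, which needs $W$ to be a non-bubble. This is exactly why the contradiction hypothesis is set at the start. It also requires $d=D-2\in[0,n-2]$, which holds since $2<D_k\le n$. The Riccati step in \cref{prop:defect-dimension-reduction} needs a polynomial upper bound on $J_a$. This comes from \cref{cor:defect-pointwise} together with \cref{lem:lower-power-D}, which hold for any bounded solution.

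\textbf{Main obstacle.} The genuine difficulty lies upstream, in the uniform quantum of \cref{prop:defect-quantum}. That proposition depends on the constants $L_0,\varepsilon_0$ being fixed for the given $W$, independently of the step of the iteration. This is true because the quantum is a property of $W$ alone, not of the current energy dimension, so the same $(L_0,\varepsilon_0)$ serve at every step. With this observed, the remaining argument is direct bookkeeping.
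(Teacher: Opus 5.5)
Your proposal is correct and is essentially the paper's proof. It starts from $\mathbf E(n)$, obtained from boundedness and the Morrey estimate, and iterates \cref{cor:energy-contraction} (where the non-bubble hypothesis enters through \cref{prop:defect-quantum}) along $D_{k+1}=\frac{n}{n+2}D_k$ until $D_k\le2$, then concludes with \cref{prop:defect-dimension-reduction} and \cref{prop:zero-defect-rigidity}. Your separate treatment of $n=2$ is just the $k=0$ case of the same iteration.
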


\begin{proof}
Assume that $W$ is not a bubble.  Boundedness and the source Morrey estimate give $\mathbf E(D_0)$ with $D_0=n$; see \eqref{eq:initial-energy-growth}.  Define
\begin{equation}\label{eq:D-iteration}
 D_{k+1}=\frac{n}{n+2}D_k,
 \qquad
 D_k=n\left(\frac{n}{n+2}\right)^k.
\end{equation}
As long as $D_k>2$, \cref{cor:energy-contraction} proves $\mathbf E(D_{k+1})$.  For every fixed $n$, a finite $k$ satisfies $D_k\le2$.  Then \cref{prop:defect-dimension-reduction} gives $\fA_f\equiv0$, and \cref{prop:zero-defect-rigidity} makes $W$ a bubble, a contradiction.
\end{proof}

\subsection{Excluding unbounded entire solutions}

\begin{proposition}\label{prop:no-unbounded}
Let $n\ge2$.  Every positive entire solution of \eqref{eq:main-equation} is bounded.
\end{proposition}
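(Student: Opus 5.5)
Suppose $u$ is an unbounded positive entire solution; I will derive a contradiction. By \cref{thm:exact-riesz} and \cref{prop:critical-morrey}, $u$ is an exact-Riesz/Morrey solution with the dimensional constant $C_M$. Choose points $x_j$ with $M(x_j)=u(x_j)^{1/n}\to\infty$. Apply \cref{lem:doubling} to $M=u^{1/n}$ at $x_j$ with $k=k_j\to\infty$; for instance, take $k_j=M(x_j)^{1/2}$. This produces $y_j$ with $M(y_j)\ge M(x_j)$ and $M\le 2M(y_j)$ on $B_{k_j/M(y_j)}(y_j)$. The natural rescalings $V_j=\cT_{y_j,\mu_j}u$, with $\mu_j=M(y_j)^{-1}$, then satisfy $V_j(e)=1$ and $V_j\le2^n$ on $B_{k_j}(e)$, by \eqref{eq:natural-good}. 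By \cref{lem:expanding-compactness}, a subsequence converges in $C^\infty_{H,\loc}$ to a positive entire $V_\infty\le2^n$ with $V_\infty(e)=1$. Affine invariance keeps $(V_j)$ a uniform exact-Riesz/Morrey family, so \cref{prop:riesz-compactness} makes $V_\infty$ a bounded exact-Riesz/Morrey solution. \Cref{thm:bounded-profile} then identifies $V_\infty$ as a bubble $U$.

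\Cref{thm:intro-isolation} applies to $(V_j)$, since it needs no energy bound. So $V_j$ is a Jerison--Lee bubble for large $j$. Each $V_j$ is an affine image of $u$, and $\cM_{\rm bub}$ is invariant under every $\cT_{a,\mu}$, so $u$ itself is a bubble. But bubbles are bounded, which contradicts the assumption that $u$ is unbounded. This proves the proposition.

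The step needing the most care is that the blow-up genuinely produces a local limit of the form required by \cref{lem:expanding-compactness}, with the good radius $k_j$ tending to infinity. \Cref{lem:doubling} with any $k_j\to\infty$ gives this directly, because the conclusion \eqref{eq:doubling-good} holds at the chosen $y_j$ regardless of the displacement. So the displacement bound is not even needed here; it matters only if one wants $y_j$ to stay near $x_j$.

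A second check is that \cref{thm:intro-isolation} as stated concerns sequences of solutions, and its contradiction hypothesis (that no $W_j$ is a bubble) is satisfied if $u$ is a non-bubble. All its ingredients — \cref{lem:global-L1}, \cref{lem:modulation}, \cref{lem:normalized-defect} and \cref{lem:nonlinear-barycenter} — use only the exact representation and the uniform Morrey bound, both of which $(V_j)$ has. In particular, no boundedness of $u$ is used anywhere in that section, so there is no circularity.

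Finally, I must make sure \cref{thm:bounded-profile} is invoked only for $V_\infty$, which is bounded by $2^n$. This avoids any need for boundedness of $u$ or of the $V_j$ beyond the good balls.
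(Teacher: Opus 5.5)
Your proposal is correct and follows the paper's proof essentially step for step. Both arguments blow up at points where $u^{1/n}\to\infty$ via \cref{lem:doubling}, use expanding-window compactness and \cref{prop:riesz-compactness} to obtain a bounded exact-Riesz/Morrey limit, identify that limit as a bubble by \cref{thm:bounded-profile}, and then use \cref{thm:intro-isolation} with the affine invariance of $\cM_{\rm bub}$ to force $u$ to be a bubble. The only differences are cosmetic: you take $k_j=M(x_j)^{1/2}$ instead of $k=j$, and you conclude directly that $u$ is a bubble, rather than first noting that every rescaling is a non-bubble and then invoking isolation for the contradiction.
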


\begin{proof}
Suppose $u$ is unbounded and put $M=u^{1/n}$.  Choose $x_j$ with $M(x_j)\to\infty$ and apply \cref{lem:doubling} with $k=j$.  This gives $a_j$ and $\mu_j=M(a_j)^{-1}$ such that
\begin{equation}\label{eq:unbounded-rescaling}
 W_j(\zeta)=\mu_j^nu(a_j\delta_{\mu_j}\zeta),
 \qquad
 W_j(e)=1,
 \qquad
 W_j\le2^n\quad\text{on }B_j(e).
\end{equation}
By \cref{thm:exact-riesz,prop:critical-morrey}, the sequence is a uniform exact-Riesz/Morrey family.  Expanding-window compactness and \cref{prop:riesz-compactness} give
\[
 W_j\longrightarrow W_\infty
 \quad\text{in }C^\infty_{H,\loc}(\HH^n),
\]
where $W_\infty$ is a bounded positive exact-Riesz/Morrey solution.  By \cref{thm:bounded-profile}, $W_\infty$ is a bubble.

If some $W_j$ is a bubble, affine invariance makes $u$ a bubble and hence bounded, contrary to assumption.  Thus every $W_j$ is a non-bubble.  This contradicts the sequential isolation theorem \cref{thm:intro-isolation}.
\end{proof}

\begin{proof}[Proof of \cref{thm:main}]
By \cref{thm:exact-riesz,prop:critical-morrey}, every positive entire solution is exact-Riesz/Morrey.  By \cref{prop:no-unbounded} it is bounded.  Apply \cref{thm:bounded-profile}.
\end{proof}

\begin{proof}[Proof of \cref{cor:all-dimensional}]
For $n\ge2$, use \cref{thm:main}.  The case $n=1$ is the unconditional Liouville theorem of Catino, Li, Monticelli, and Roncoroni \cite{CatinoLiMonticelliRoncoroni2025}.
\end{proof}

\appendix
\section{Conformal normalization and zero-defect rigidity}\label{app:normalizations}

This appendix fixes the constants that connect the Heisenberg equation, the CR sphere, and the Flynn--V\'etois form of the Jerison--Lee identity.

\subsection{Real and complex sub-Laplacians}

With the group law \eqref{eq:group-law},
\[
 Z_\alpha=\frac12(X_\alpha-\ii Y_\alpha)
 =\partial_{z_\alpha}+\ii\overline z_\alpha\partial_t,
 \qquad
 [Z_\alpha,Z_{\bar\beta}]=-2\ii\delta_{\alpha\beta}\partial_t.
\]
For the standard contact form, $\Delta_bu=-\Rea(u_{\alpha\bar\alpha})$, and a direct computation gives
\begin{equation}\label{eq:laplacian-dictionary}
 4\Delta_b=-\sum_{j=1}^n(X_j^2+Y_j^2)=L.
\end{equation}
Thus the equation used throughout the paper is exactly the Flynn--V\'etois normalization
\[
 4\Delta_bu=n^2u^{1+2/n}.
\]
If instead one starts from the coefficient-one equation $Lv=v^p$, then
\[
 u=n^{-n}v,
 \qquad
 U_0^{(v)}=(2n)^n\bigl((1+|z|^2)^2+t^2\bigr)^{-n/2},
\]
whereas the present paper uses
\[
 U_0=n^{-n}U_0^{(v)}
 =2^n\bigl((1+|z|^2)^2+t^2\bigr)^{-n/2}.
\]

\subsection{Cayley measure and spherical spectrum}

For the group convention \eqref{eq:group-law}, we take
\begin{equation}\label{eq:cayley-fixed}
 \cC(z,t)
 =\left(
 \frac{2z}{1+|z|^2+\ii t},
 \frac{1-|z|^2-\ii t}{1+|z|^2+\ii t}
 \right).
\end{equation}
It omits $N=(0,\ldots,0,-1)$.  We define the spherical measure used in the paper by the exact pullback identity
\begin{equation}\label{eq:spherical-measure-normalization}
 \cC^*(dV_S)=U_0^{p+1}\dx.
\end{equation}
If $d\sigma$ denotes Euclidean surface measure on the unit sphere, a direct Jacobian computation gives $dV_S=2\,d\sigma$.  The choice removes an otherwise irrelevant factor from every reciprocity formula.
More explicitly,
\[
 J_{\cC}
 =2^{2n+1}\bigl((1+|z|^2)^2+t^2\bigr)^{-(n+1)},
 \qquad
 U_0^{p+1}=2J_{\cC}.
\]
If $\Omega_{\rm FL}=J_{\cC}^{1/(p+1)}$ denotes the natural Cayley factor for Euclidean surface measure, then
\[
 \Omega_{\rm FL}=2^{-n/[2(n+1)]}U_0.
\]

Let $\cL_{S,\mathrm{FL}}$ be the standard spherical conformal sub-Laplacian in the Frank--Lieb normalization.  It is a globally smooth, positive, self-adjoint, invertible operator on the sphere.  In the present normalization,
\begin{equation}\label{eq:sphere-operator-dictionary}
 \cL_S=4\cL_{S,\mathrm{FL}}.
\end{equation}
Equivalently, $\cL_S$ is characterized by the covariance relation \eqref{eq:conformal-covariance}.  Since $LU_0=n^2U_0^p$, this gives
\[
 \kappa_S=\cL_S1=n^2.
\]
The bispherical harmonics $\cH_{r,s}$ diagonalize $\cL_S$ and
\begin{equation}\label{eq:spectrum-appendix}
 \left.\cL_S\right|_{\cH_{r,s}}
 =4\left(r+\frac n2\right)\left(s+\frac n2\right).
\end{equation}
Indeed, the horizontal spherical sub-Laplacian in the Frank--Lieb normalization has eigenvalue
$rs+\frac n2(r+s)$ on $\cH_{r,s}$ \cite[Section~5]{FrankLieb2012}; adding the conformal constant and multiplying by four gives \eqref{eq:spectrum-appendix}.  This also proves the kernel statement \eqref{eq:sphere-kernel}.

Let $G_S$ denote the inverse kernel of $\cL_S$ with respect to $dV_S$.  Applying the linear fundamental-solution identity on $\HH^n$ to a smooth spherical source, using \eqref{eq:conformal-covariance}, and then changing variables by \eqref{eq:spherical-measure-normalization} gives the exact identity
\begin{equation}\label{eq:green-dictionary-appendix}
 G_S(\cC\xi,\cC\eta)
 =\frac{\GammaH(\eta^{-1}\xi)}{U_0(\xi)U_0(\eta)}.
\end{equation}
Thus no undetermined dimensional constant occurs in \eqref{eq:green-dictionary}.
If $G_{S,\mathrm{FL}}$ is instead the inverse kernel of $\cL_{S,\mathrm{FL}}$ with respect to $d\sigma$, then $G_S=G_{S,\mathrm{FL}}/8$; both the factor four in the operator and the factor two in the measure contribute.

\subsection{The local rigidity input}

\begin{proposition}[Zero-defect rigidity]\label{prop:zero-defect-rigidity}
Let $n\ge2$ and let $u>0$ be a smooth entire solution of
\[
 4\Delta_bu=n^2u^{1+2/n}.
\]
Define $f$, $D_{\alpha\beta}$, $E_{\alpha\bar\beta}$, $D_\alpha$, $E_\alpha$, $G_\alpha$, and $\fA_f$ by \eqref{eq:f-def}--\eqref{eq:defect-density}.  If $\fA_f\equiv0$, then there are $\mu\in\CC^n$ and $\lambda\in\CC$ with $|\mu|^2<4\Ima\lambda$ such that
\begin{equation}\label{eq:JL-explicit-family}
 u(z,t)
 =\frac{(4\Ima\lambda-|\mu|^2)^{n/2}}
 {\bigl|t+\ii|z|^2+\mu\mathbin{\cdot}z+\lambda\bigr|^n},
\end{equation}
where $\mu\mathbin{\cdot}z=\sum_{\alpha=1}^n\mu_\alpha z_\alpha$ is complex bilinear.
Equivalently, $u\in\cM_{\rm bub}$.
\end{proposition}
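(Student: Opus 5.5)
The vanishing of every tensor in $\fA_f$ will be turned into an overdetermined system for a new unknown, integrated on all of $\HH^n$ using only the local pluriharmonic characterization and no decay. Put $\phi=u^{-1/n}$, which up to a constant is $e^{-f}$. The vanishing of $D_{\alpha\beta}=f_{\alpha\beta}-2f_\alpha f_\beta$ is then equivalent to $\phi_{\alpha\beta}=0$ up to a fixed multiple of $e^{-f}$. Indeed, $(e^{-f})_{\alpha\beta}=e^{-f}(f_\alpha f_\beta-f_{\alpha\beta})$, so one may need a different power $e^{-2f}$. I would therefore first choose the exponent $k$ so that $(e^{-kf})_{\alpha\beta}=-ke^{-kf}D_{\alpha\beta}$; this forces $k=2$. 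Set
\[
 \psi=e^{-2f}=C_nu^{-2/n}.
\]

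The first step is to translate the conditions on $\psi$. The condition $D\equiv0$ gives $\psi_{\alpha\beta}=0$. The condition $E\equiv0$ says that $\psi_{\alpha\bar\beta}$ is pure trace up to the terms $f_\alpha f_{\bar\beta}$. Using $G_\alpha=0$, $E_\alpha=0$, $D_\alpha=0$ and the equation \eqref{eq:f-equation}, I expect to show that $\psi_{\alpha\bar\beta}=h\,\delta_{\alpha\beta}$, where $h$ is an explicit combination of $g$ and $e^{2f}$. Next, $G_\alpha=0$ reads $\ii f_{0\alpha}=-gf_\alpha$. I would convert it into $Z_\alpha$-derivatives of $h$ and of $\psi_0$, with the aim of proving
\[
 Z_\alpha\bigl(\psi_{\gamma\bar\gamma}\bigr)=\text{const}\cdot Z_\alpha\,\overline{\ldots}
\]
and ultimately that $\psi_{\gamma\bar\gamma}$ and $\psi_0$ are constants. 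This mirrors the rigidity calculation in Jerison--Lee \S4. Because $\fA_f$ vanishes pointwise, every identity here holds pointwise, with no integration by parts over unbounded regions. The commutations will use $[Z_\alpha,Z_{\bar\beta}]=-2\ii\delta_{\alpha\beta}\partial_t$ and flat torsion-free commutation of third derivatives. This step is where $n\ge2$ enters: trace-free parts and traces separate only when $n\ge2$, and contracting $\psi_{\alpha\bar\beta}=h\delta_{\alpha\beta}$ against commutators with $\alpha\ne\beta$ yields $Z_\gamma h=0$ and hence $h$ constant.

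Once $\psi_{\alpha\beta}=0$ and $\psi_{\alpha\bar\beta}=c\,\delta_{\alpha\beta}$ hold with $c$ a real constant, I would set
\[
 \chi=\psi-c|z|^2.
\]
Then $\chi_{\alpha\bar\beta}=0$ for all $\alpha,\beta$, so $\chi$ is CR-pluriharmonic on all of $\HH^n$ (a simply connected domain). By the characterization of \cite{Bedford1980,Lee1988pseudoEinstein}, $\chi=\Rea H$ for a CR function $H$. The remaining conditions $\psi_{\alpha\beta}=0$ and $\psi_0$ constant then force $H$ to be a polynomial in $w=t+\ii|z|^2$ and $z$ of low degree. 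The natural ansatz is
\[
 \psi=\kappa\bigl|t+\ii|z|^2+\mu\cdot z+\lambda\bigr|,
\]
or more precisely $\psi^2$ proportional to $|w+\mu\cdot z+\lambda|^2$. Accordingly, I would compare $\psi$ directly with the known bubble expression: the function $\bigl|t+\ii|z|^2+\mu\cdot z+\lambda\bigr|^{2}$ raised to the power $1$ corresponds to $u^{-2/n}$.

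On reflection, $\psi$ itself satisfies $\psi_{\alpha\beta}=0$ only in the quadratic-plus-modulus sense. It is cleaner to work with $\Psi=u^{-2/n}$ squared against the CR function $w$. I would define the complex function
\[
 \Theta=\ii\psi_0+\ldots,
\]
patterned on the Jerison--Lee argument, show that $Z_{\bar\alpha}\Theta=0$ together with $Z_\alpha Z_\beta\Theta=0$, and conclude that $\Theta$ is an affine CR function. Its modulus squared then reproduces \eqref{eq:JL-explicit-family}. Finally, the constants are pinned down by substituting back into the equation: the normalization $n^2$ fixes the numerator $(4\Ima\lambda-|\mu|^2)^{n/2}$, and positivity of $u$ on all of $\HH^n$ forces $|\mu|^2<4\Ima\lambda$.

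The main obstacle is the middle step: deriving, purely algebraically from $G_\alpha=E_\alpha=D_\alpha=0$, that the relevant trace quantity is constant, with correct signs and factors in the commutator identities. This is where I would follow Jerison--Lee's proof of their Proposition~4.1 line by line in the present conventions.
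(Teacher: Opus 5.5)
\textbf{There is a genuine gap at the central step.} You correctly find that $\psi=e^{-2f}$ satisfies $\psi_{\alpha\beta}=-2D_{\alpha\beta}\psi=0$. The next claim is false: that $E_{\alpha\bar\beta}=0$, together with $D_\alpha=E_\alpha=G_\alpha=0$ and the equation, yields $\psi_{\alpha\bar\beta}=h\,\delta_{\alpha\beta}$. It fails for the very bubbles the proposition is meant to produce.

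One has $\psi_{\alpha\bar\beta}=e^{-2f}(-2f_{\alpha\bar\beta}+4f_\alpha f_{\bar\beta})$. The condition $E_{\alpha\bar\beta}=0$ removes only the trace-free part of $f_{\alpha\bar\beta}$, while the rank-one term $4e^{-2f}f_\alpha f_{\bar\beta}=\psi^{-1}\psi_\alpha\psi_{\bar\beta}$ survives. For $U_0$ every tensor in $\fA_f$ vanishes, $\psi=t^2+(1+|z|^2)^2$, and
\[
 \psi_{\alpha\bar\beta}=2\bigl(1+|z|^2+\ii t\bigr)\delta_{\alpha\beta}+4\bar z_\alpha z_\beta .
\]
This is not a multiple of the identity when $n\ge2$ and $z\ne0$. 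Consequences:
\begin{itemize}
\item No algebra with $G_\alpha$, $D_\alpha$, $E_\alpha$ can give the pure-trace identity.
\item $\chi=\psi-c|z|^2$ is not CR-pluriharmonic, so the rest of the chain has nothing to stand on.
\item Your intended end state is itself incompatible with the target. A real $\psi$ with $\psi_{\alpha\bar\beta}=c\,\delta_{\alpha\beta}$, $c$ real, satisfies $2\ii\delta_{\alpha\beta}\psi_0=\psi_{\alpha\bar\beta}-\overline{\psi_{\beta\bar\alpha}}=0$. It would then be independent of $t$, which no bubble is.
\item Your fallback $\Theta=\ii\psi_0+\cdots$ is never specified, so the proposal never constructs the CR-holomorphic function the conclusion requires.
\end{itemize}

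\textbf{The missing idea} is to apply the pluriharmonic characterization to $f$ itself, not to a modification of $e^{-2f}$. For $n\ge2$, $E_{\alpha\bar\beta}=0$ is exactly the CR-pluriharmonicity condition for $f$; this is where $n\ge2$ enters. Bedford's theorem and simple connectedness then give a global CR-holomorphic $h$ with $\Rea h=f$, hence $h_\alpha=2f_\alpha$. The paper's argument then runs as follows.
\begin{itemize}
\item Set $\Theta=e^{-h}$. It is nonvanishing and CR holomorphic, and $|\Theta|^2=e^{-2f}$ is your $\psi$; this explains the rank-one term $\Theta_\alpha\overline{\Theta_\beta}$ above.
\item $\Theta_{\alpha\beta}=-2D_{\alpha\beta}\Theta=0$.
\item Apply $Z_{\bar\alpha}$ to $\Theta_{\alpha\alpha}=0$ and use $[Z_\alpha,Z_{\bar\alpha}]=-2\ii\partial_t$ to get $Z_\alpha\Theta_0=0$. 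Together with $Z_{\bar\alpha}\Theta_0=0$, this makes $\Theta_0$ a constant $a$.
\item $\Theta-a(t+\ii|z|^2)$ is then a $t$-independent holomorphic function of $z$ with vanishing second derivatives, hence affine.
\item The equation excludes $a=0$ and fixes $|a|$, and nonvanishing of $\Theta$ gives $|\mu|^2<4\Ima\lambda$.
\end{itemize}
This uses only $D_{\alpha\beta}=E_{\alpha\bar\beta}=0$ and the equation. So the ``main obstacle'' you identify, extracting constancy of a trace from $G_\alpha=E_\alpha=D_\alpha=0$, never has to be faced.
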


\begin{proof}
The positive weights in \eqref{eq:defect-density} show that $\fA_f\equiv0$ gives, in particular,
\[
 D_{\alpha\beta}=0,
 \qquad
 E_{\alpha\bar\beta}=0.
\]
Since $n\ge2$, the second identity is the local CR-pluriharmonicity condition
\[
 f_{\alpha\bar\beta}
 =\frac1n f_{\gamma\bar\gamma}\delta_{\alpha\bar\beta}.
\]
The standard characterization of CR-pluriharmonic functions \cite{Bedford1980}, applied in exactly this form by Jerison and Lee \cite[p.~12, proof of Corollary~4.2]{JerisonLee1988}, and the simple connectedness of $\HH^n$ give a global CR-holomorphic function $h$ with $\Rea h=f$.  Hence $h_\alpha=2f_\alpha$.  Set
\[
 \psi=e^{-h}.
\]
Then $\psi$ is nonvanishing and CR holomorphic, $|\psi|=e^{-f}$, and
\[
 \psi_{\alpha\beta}
 =\bigl(-2f_{\alpha\beta}+4f_\alpha f_\beta\bigr)\psi
 =-2D_{\alpha\beta}\psi=0.
\]

We now integrate this last system directly on $\HH^n$.  Write
\[
 w=t+\ii|z|^2;
\]
then $w$ is CR holomorphic and $w_0=1$.  Fixing $\alpha$ and using
$[Z_\alpha,Z_{\bar\alpha}]=-2\ii\partial_t$, CR holomorphicity and
$\psi_{\alpha\alpha}=0$ give
\[
 0=Z_{\bar\alpha}(\psi_{\alpha\alpha})
   =4\ii\,\partial_tZ_\alpha\psi.
\]
Thus $Z_\alpha\psi_0=0$, while CR holomorphicity gives $Z_{\bar\alpha}\psi_0=0$.  Hence every horizontal derivative of $\psi_0$ vanishes, and bracket generation makes $\psi_0=a$ a complex constant.  The function $k=\psi-aw$ is CR holomorphic and independent of $t$.  It is therefore holomorphic in $z$; moreover $k_{\alpha\beta}=0$, so
\[
 \psi(z,t)=a\bigl(t+\ii|z|^2\bigr)+b\mathbin{\cdot}z+c
\]
for constants $a,c\in\CC$ and $b\in\CC^n$.

If $a=0$, nonvanishing forces $b=0$, while the equation for $f$ excludes a constant $\psi$.  Thus $a\ne0$.  Put $\mu=b/a$ and $\lambda=c/a$.  The polynomial
\[
 t+\ii|z|^2+\mu\mathbin{\cdot}z+\lambda
\]
has no zero on $\HH^n$ precisely when
$4\Ima\lambda-|\mu|^2>0$.  Since $u=2^ne^{nf}=2^n|\psi|^{-n}$, substitution in $4\Delta_bu=n^2u^{1+2/n}$ fixes the remaining modulus:
\[
 |a|^{-2}=\Ima\lambda-\frac{|\mu|^2}{4},
 \qquad
 2^n|a|^{-n}=(4\Ima\lambda-|\mu|^2)^{n/2}.
\]
This proves \eqref{eq:JL-explicit-family}.  The last family agrees with the affine orbit \eqref{eq:bubble-family-intro} by completing the square and applying a left translation and a dilation.
\end{proof}

\begin{remark}[Rigidity input boundary]\label{rem:rigidity-verification-boundary}
The only external fact in the preceding proof is Bedford's local characterization of CR-pluriharmonic functions \cite{Bedford1980}, used after $E_{\alpha\bar\beta}=0$; Jerison and Lee invoke it at precisely the same point on p.~12 of \cite{JerisonLee1988}.  Everything after the construction of $h$ is written above on the whole Heisenberg group.  In particular, the rigidity step uses neither $u\in L^{p+1}$ nor any decay at infinity.
\end{remark}

\section{Standard subelliptic analytic inputs}\label{app:analytic-inputs}

For clarity, we list the local and potential-theoretic tools used in the paper.  All constants below are invariant under left translation; after scaling, their dependence is only on the displayed radius ratios, the dimension, and coefficient bounds.

\subsection{Harnack and interior estimates}

Let $v>0$ solve
\[
 Lv=a(\xi)v
\]
on $B_1$, with $a\in L^\infty$.  The scale-invariant Harnack inequality gives
\[
 \sup_{B_{1/2}}v
 \le C(n,\|a\|_\infty)\inf_{B_{1/2}}v.
\]
For H\"ormander sums of squares, the maximum-principle and Harnack
framework goes back to Bony \cite{Bony1969}; the version with a bounded
potential used here follows from the scale-invariant estimates of
Citti--Garofalo--Lanconelli \cite{CittiGarofaloLanconelli1993}.
Interior estimates for H\"ormander sums of squares then give, for nested balls $B_r\Subset B_R$ and every integer $k$,
\[
 \|v\|_{S_H^{k+2,q}(B_r)}
 \le C\bigl(\|Lv\|_{S_H^{k,q}(B_R)}+\|v\|_{L^q(B_R)}\bigr).
\]
Together with the subelliptic Sobolev embeddings, this yields the
Schauder-type bootstrap used in
\cref{lem:expanding-compactness,lem:relative-derivatives}.  The
function-space and interior-estimate inputs are part of the
Folland--Stein theory; see \cite{FollandStein1974,FollandStein1982}.

On compact subsets of the CR sphere the same estimates hold in pseudohermitian coordinate charts.  In particular, bounded subsets of $S_H^{2,2}(K')$ are precompact in $L^2(K)$ when $K\Subset K'$.  This is the local Rellich input used in \cref{lem:normalized-defect}.

\subsection{Positive harmonic Liouville theorem}

If $h\ge0$ and $Lh=0$ on all of $\HH^n$, then $h$ is constant.  One proof is the real-sub-Laplacian Liouville theorem of Bonfiglioli and Lanconelli \cite{BonfiglioliLanconelli2001}.  Alternatively, let $m=\inf h$, choose $y_k$ with $h(y_k)-m\to0$, and apply the scale-invariant Harnack inequality to $h-m\ge0$ on a ball whose concentric half-ball contains both a fixed point $x$ and $y_k$.  The uniform Harnack constant gives $h(x)-m\le C(h(y_k)-m)\to0$, so $h\equiv m$.

\subsection{Green exhaustion}

For a regular bounded exhaustion $\Omega_R\nearrow\HH^n$, the Friedrichs Dirichlet forms increase to the full-space form.  Their killed heat kernels increase pointwise to the Heisenberg heat kernel, and hence the integrated Green kernels satisfy
\[
 G_R(\xi,\eta)\uparrow\GammaH(\eta^{-1}\xi).
\]
This formulation avoids imposing classical boundary regularity at characteristic points.  It is the only Dirichlet-form convergence input in the proof of \cref{thm:exact-riesz}.  Systematic accounts of the potential theory for sub-Laplacians on stratified groups, including Green functions, the Poisson kernel and the Liouville property, are given in \cite{BonfiglioliLanconelliUguzzoni2007}.

\section{The \texorpdfstring{$m=0$}{m=0} cutoff estimate from the
Jerison--Lee identity}
\label{app:jl-cutoff}

This appendix proves the annular estimate used in
\cref{prop:annular-defect}, starting from the $m=0$ Jerison--Lee
divergence identity.  The pointwise identity itself is the algebraic
input from \cite[Proposition~4.1, formula~(4.2)]{JerisonLee1988}; a
direct verification of that identity in coordinates is given by Ma and
Ou \cite[Proposition~2.1]{MaOu2023}.  All localization and lower-order
estimates needed after that identity are proved below.  In particular, no Liouville theorem or growth assumption
from \cite{FlynnVetois2023} is used.  We retain the notation of
\cref{sec:jl-defect} and put
\[
 w=e^{2(n-1)f}.
\]
All integrations below are over $\HH^n$ unless a domain is displayed.

\subsection{The pointwise identity and its coercivity}

We first translate the formula of Jerison and Lee into the conventions
of this paper.  Their standard holomorphic frame and component
derivatives are our $Z_\alpha$ and subscript derivatives.  Their
equation (4.1) is
\[
 \Rea(f_{\alpha\bar\alpha})
   +n f_\alpha f_{\bar\alpha}+ne^{2f}=0,
\]
which is exactly \eqref{eq:f-equation}, because
$\Delta_bf=-\Rea(f_{\alpha\bar\alpha})$.  The tensors
\[
 D_{\alpha\beta},\quad E_{\alpha\bar\beta},\quad
 D_\alpha,\quad E_\alpha
\]
in their formula agree with
\eqref{eq:DE-tensors}--\eqref{eq:DE-one-forms}; in particular, the
component contractions defining $D_\alpha$ and $E_\alpha$ are
unchanged.  Their remaining one-form is
\[
 \ii f_{0\alpha}-\ii f_0f_\alpha+e^{2f}f_\alpha
       +f_\beta f_{\bar\beta}f_\alpha
 =\ii f_{0\alpha}+gf_\alpha=G_\alpha.
\]
Define the complex horizontal vector field
\begin{equation}\label{eq:jl-flux}
 \mathcal V_\alpha
 =g(D_\alpha+E_\alpha)
  -\ii f_0(D_\alpha-3E_\alpha+3G_\alpha).
\end{equation}
After grouping the vector field inside the divergence in formula (4.2)
of Jerison and Lee, it is precisely $\mathcal V_\alpha$ in
\eqref{eq:jl-flux}.  Thus that formula becomes
\begin{align}
 \Rea\bigl((w\mathcal V_\alpha)_{\bar\alpha}\bigr)
 =w\bigl[&e^{2f}(|D_{\alpha\beta}|^2
                    +|E_{\alpha\bar\beta}|^2)
          +|G_\alpha|^2+|G_\alpha+D_\alpha|^2
          +|G_\alpha-E_\alpha|^2 \notag\\
 &\quad
          +|D_{\alpha\beta}f_{\bar\gamma}
                    +E_{\alpha\bar\gamma}f_\beta|^2\bigr].
 \label{eq:jl-pointwise}
\end{align}
The last tensor square is nonnegative and will not be used separately.
The three completed one-form squares in
\eqref{eq:jl-pointwise} satisfy
\begin{equation}\label{eq:one-form-coercivity}
 |G|^2+|G+D|^2+|G-E|^2
 \ge c\bigl(|D|^2+|E|^2+|G|^2\bigr)
\end{equation}
for a numerical $c>0$.  Thus the right-hand side of
\eqref{eq:jl-pointwise} dominates $c\fA_f$.

\begin{lemma}[The basic cutoff inequality]\label{lem:jl-basic-cutoff}
Let $\phi\ge0$ be smooth and compactly supported.  Then
\begin{align}
 \int\fA_f\phi
 &\le C
 \left(\int w(e^{4f}+|\partial f|^4+f_0^2)
                 \phi^{-1}|\partial\phi|^2\right)^{1/2}
 \left(\int_{\{\partial\phi\ne0\}}\fA_f\phi\right)^{1/2}.
 \label{eq:jl-basic-cutoff}
\end{align}
The integrand containing $\phi^{-1}$ is understood as zero where
$\phi=|\partial\phi|=0$, or equivalently by a positive
regularization followed by monotone passage to the limit.  For the
power cutoff used below, the product extends continuously by zero.
\end{lemma}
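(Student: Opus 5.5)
We multiply the pointwise identity \eqref{eq:jl-pointwise} by $\phi$ and integrate over $\HH^n$. By \eqref{eq:one-form-coercivity}, the right-hand side dominates $c\,\fA_f\phi$. The left-hand side is the real part of a divergence, so integration by parts moves the derivative onto the compactly supported test function:
\[
 c\int\fA_f\phi\le\int\Rea\bigl((w\mathcal V_\alpha)_{\bar\alpha}\bigr)\phi
 =-\Rea\int w\mathcal V_\alpha\,\phi_{\bar\alpha}.
\]
In the flat structure, the adjoint of $Z_{\bar\alpha}$ is $-Z_\alpha$ against Haar measure, with no lower-order terms. The only contributions therefore come from $\supp\partial\phi$, and
\[
 \int\fA_f\phi\le C\int_{\{\partial\phi\ne0\}}w|\mathcal V||\partial\phi|.
\]

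Next comes a pointwise bound on the flux. From \eqref{eq:jl-flux} we have
\[
 |\mathcal V|\le C\bigl(|g|+|f_0|\bigr)\bigl(|D|+|E|+|G|\bigr).
\]
Since $g=|\partial f|^2+e^{2f}-\ii f_0$, this gives $|g|+|f_0|\le C(|\partial f|^2+e^{2f}+|f_0|)$. Hence
\[
 w|\mathcal V||\partial\phi|\le C\bigl[w(e^{4f}+|\partial f|^4+f_0^2)\phi^{-1}|\partial\phi|^2\bigr]^{1/2}\bigl[w(|D|^2+|E|^2+|G|^2)\phi\bigr]^{1/2}.
\]
To get this, write $|\partial\phi|=\phi^{-1/2}|\partial\phi|\cdot\phi^{1/2}$ and split the factor $w$ as $w^{1/2}w^{1/2}$. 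The second bracket is bounded by $\fA_f\phi$, because $w(|D_\alpha|^2+|E_\alpha|^2+|G_\alpha|^2)\le\fA_f$ by \eqref{eq:defect-density}. Applying Cauchy--Schwarz on $\{\partial\phi\ne0\}$ then yields \eqref{eq:jl-basic-cutoff}.

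The only delicate point is justifying the factor $\phi^{-1}$ where $\phi$ vanishes. There are two ways to handle it.

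\emph{Regularization.} Replace $\phi^{-1}$ by $(\phi+\delta)^{-1}$. Run the Cauchy--Schwarz step with the splitting $|\partial\phi|=(\phi+\delta)^{-1/2}|\partial\phi|\cdot(\phi+\delta)^{1/2}$, so the second factor becomes $\int_{\{\partial\phi\ne0\}}\fA_f(\phi+\delta)$. Since $\phi$ has compact support, this converges to the stated term as $\delta\downarrow0$. The first factor increases monotonically as $\delta\downarrow0$, so monotone convergence gives the limit.

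\emph{Power cutoff.} For $\phi=\eta^8$, one has $\phi^{-1}|\partial\phi|^2=64\,\eta^6|\partial\eta|^2$, which is smooth and vanishes with $\eta$.

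Integrability is not an issue: all integrands are smooth and the support is compact, so the integration by parts is legitimate. I expect the only real obstacle to be bookkeeping. One must check that the integration by parts produces exactly $\Rea$ of $w\mathcal V_\alpha\phi_{\bar\alpha}$, with $|\phi_{\bar\alpha}|\le|\partial\phi|$, and that every summand of $\mathcal V$ is a product of one factor from $\{g,f_0\}$ and one from $\{D,E,G\}$. Both are immediate from \eqref{eq:jl-flux}.
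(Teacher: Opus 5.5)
Your proposal is correct and follows essentially the same route as the paper. You integrate \eqref{eq:jl-pointwise} against $\phi$, use \eqref{eq:one-form-coercivity} for the lower bound $c\int\fA_f\phi$, bound the flux by $C\,w\,(e^{2f}+|\partial f|^2+|f_0|)(|D|+|E|+|G|)\,|\partial\phi|$ via $|g|\le e^{2f}+|\partial f|^2+|f_0|$, and close with Cauchy--Schwarz on $\{\partial\phi\ne0\}$. Your $\delta$-regularization and the computation for the power cutoff $\phi=\eta^8$ simply spell out the convention the lemma already states.
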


\begin{proof}
Multiply \eqref{eq:jl-pointwise} by $\phi$ and integrate by parts.
By \eqref{eq:one-form-coercivity}, the resulting left-hand side is at
least $c\int\fA_f\phi$.  Since
\[
 |g|\le e^{2f}+|\partial f|^2+|f_0|,
\]
the boundary term is bounded pointwise by
\[
 Cw(e^{2f}+|\partial f|^2+|f_0|)
       (|D|+|E|+|G|)|\partial\phi|.
\]
Cauchy--Schwarz, restricted in the second factor to
$\{\partial\phi\ne0\}$, gives \eqref{eq:jl-basic-cutoff}.
\end{proof}

\subsection{The two lower-order estimates}

The following two estimates are stated with the weights needed for the
cutoff argument.  Their proofs are included to keep the exponent
bookkeeping explicit.  Negative powers at the zero set are understood
in the extended-value sense, or by positive regularization whenever
the right-hand side is finite.  In the only application
$\phi=\eta^6$, $\sigma=1/3$, and all weighted products displayed
below extend continuously by zero.

\begin{lemma}[Control of the Reeb derivative]\label{lem:jl-f0-control}
For every smooth compactly supported $\phi\ge0$, every
$\sigma\in\RR$, and every $\varepsilon>0$,
\begin{align}
 \int wf_0^2\phi
 \le C\int w\bigl[&e^{4f}\phi+|\partial f|^4\phi
       +\phi^{-3}|\partial\phi|^4
       +\varepsilon^{-2}\phi^{1-2\sigma}\notag\\
 &\quad+\varepsilon(|D|^2+|E|^2+|G|^2)
                      \phi^{1+\sigma}\bigr].
 \label{eq:jl-f0-control}
\end{align}
\end{lemma}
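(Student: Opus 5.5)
The plan is to express $f_0$ through the tensors already appearing in the defect, and then integrate by parts once against $wf_0\phi$. The starting point is the definition $g=|\partial f|^2+e^{2f}-\ii f_0$ together with the equation \eqref{eq:f-equation}. Writing $\Rea(f_{\alpha\bar\alpha})=-n|\partial f|^2-ne^{2f}$, and using $f_{\alpha\bar\alpha}-f_{\bar\alpha\alpha}=n\ii f_0$ (with the sign fixed by $[Z_\alpha,Z_{\bar\beta}]=-2\ii\delta_{\alpha\beta}\partial_t$ and our normalizations), we get $f_{\alpha\bar\alpha}=-n g$ up to a harmless constant factor on $f_0$. Thus $f_0$ is, up to constants, $\Ima$ of the trace $f_{\alpha\bar\alpha}$, plus zeroth-order terms in $|\partial f|^2,e^{2f}$.

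Rather than estimating $f_0^2$ directly, I would use the one-form $G_\alpha=\ii f_{0\alpha}+gf_\alpha$, i.e.\ $\ii f_{0\alpha}=G_\alpha-gf_\alpha$, and write
\[
\int wf_0^2\phi=\Rea\int wf_0\,\overline{(\text{trace expression})}\,\phi .
\]
The key step is to integrate by parts one horizontal derivative out of $f_{\alpha\bar\alpha}$ in the term $\int w\phi f_0\,\Ima(f_{\alpha\bar\alpha})$. The derivative then falls on three factors.
\begin{enumerate}
\item It falls on $f_0$, producing $f_{0\alpha}$. We substitute $-\ii(G_\alpha-gf_\alpha)$; the $gf_\alpha f_{\bar\alpha}$ part is again of the form $f_0|\partial f|^2$ plus quartic terms.
\item It falls on $w=e^{2(n-1)f}$, producing $w f_\alpha$ times $f_0f_{\bar\alpha}$.
\item It falls on $\phi$, producing $w f_0 f_{\bar\alpha}\partial\phi$.
\end{enumerate}
Each resulting term is handled by Young's inequality, with a small multiple of $\int wf_0^2\phi$ absorbed on the left. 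Terms of the form $wf_0|\partial f|^2\phi$ are bounded by $\delta wf_0^2\phi+C w|\partial f|^4\phi$. The cutoff term is bounded by $\delta wf_0^2\phi+Cw|\partial f|^2|\partial\phi|^2\phi^{-1}$, and the latter by $w|\partial f|^4\phi+w\phi^{-3}|\partial\phi|^4$. The $e^{2f}$ zeroth-order pieces give $we^{4f}\phi$.

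The main obstacle is the term $\int w f_{\bar\alpha}G_\alpha\phi$, or rather the analogous contraction of $G$ produced in case 1. Its natural bound $|G||\partial f|$ is not coercive in $f_0$, so I would split the weight as $\phi=\phi^{(1+\sigma)/2}\phi^{(1-\sigma)/2}$ and apply Young's inequality as
\[
w|G|\,|\partial f|\,\phi\le \varepsilon w|G|^2\phi^{1+\sigma}+\varepsilon^{-1}w|\partial f|^2\phi^{1-\sigma},
\]
followed by $\varepsilon^{-1}|\partial f|^2\phi^{1-\sigma}\le |\partial f|^4\phi+\varepsilon^{-2}\phi^{1-2\sigma}$. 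This produces exactly the $\varepsilon$ and $\varepsilon^{-2}\phi^{1-2\sigma}$ terms in \eqref{eq:jl-f0-control}. If the integration by parts instead creates $D_\alpha$ or $E_\alpha$, for instance by rewriting $f_{\alpha\beta}$ via $D_{\alpha\beta}+2f_\alpha f_\beta$ or the trace-free part via $E$, those are treated identically.

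After absorbing the $\delta$-terms, the stated inequality follows. The regularization of $\phi^{-3}$ and $\phi^{1-2\sigma}$ near $\{\phi=0\}$ is handled as in \cref{lem:jl-basic-cutoff}.
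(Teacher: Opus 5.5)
Your proposal is correct and is essentially the paper's proof. Integrating $\Ima f_{\alpha\bar\alpha}=nf_0$ against $wf_0\phi$ is exactly the paper's integrated divergence identity for $\Ima\bigl((wf_0f_\alpha\phi)_{\bar\alpha}\bigr)$; in the paper's conventions $f_{\alpha\bar\alpha}=-ng$ holds exactly, so no extra constant appears. The paper then closes with the same Young inequalities you use, including the weighted split that produces $\varepsilon^{-2}\phi^{1-2\sigma}$ and $\varepsilon|G|^2\phi^{1+\sigma}$. One small refinement: the term where the derivative hits $w$, and the $f_0|\partial f|^2$ piece in your case 1, are both real and drop out when you take imaginary parts, so your Young bounds for them are harmless but unnecessary.
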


\begin{proof}
Using $f_{0\alpha}=-\ii G_\alpha+\ii gf_\alpha$ and
\eqref{eq:f-equation}, a direct differentiation gives
\begin{align}
 \Ima\bigl((wf_0f_\alpha\phi)_{\bar\alpha}\bigr)
 ={}&w(nf_0^2-e^{2f}|\partial f|^2-|\partial f|^4)\phi\notag\\
 &+\Ima\bigl(wf_\alpha(\ii\overline{G_\alpha}\phi
                         +f_0\phi_{\bar\alpha})\bigr).
 \label{eq:jl-f0-divergence}
\end{align}
After integration, the left-hand side vanishes.  Young's inequality
gives
\[
 e^{2f}|\partial f|^2
 \le\tfrac12(e^{4f}+|\partial f|^4)
\]
and
\[
 |f_0||\partial f||\partial\phi|
 \le\tfrac14f_0^2\phi
   +C\bigl(|\partial f|^4\phi
              +\phi^{-3}|\partial\phi|^4\bigr).
\]
Finally, the weighted three-factor Young inequality yields
\begin{align*}
 |\partial f|(|D|+|E|+|G|)\phi
 \le C\bigl[&|\partial f|^4\phi
      +\varepsilon^{-2}\phi^{1-2\sigma}\\
 &+\varepsilon(|D|^2+|E|^2+|G|^2)
                    \phi^{1+\sigma}\bigr].
\end{align*}
Absorbing the first term containing $f_0^2$ proves
\eqref{eq:jl-f0-control}.
\end{proof}

\begin{lemma}[Control of the horizontal gradient]\label{lem:jl-gradient-control}
Assume $n\ge2$.  Under the hypotheses of
\cref{lem:jl-f0-control},
\begin{align}
 \int w|\partial f|^4\phi
 \le C\int w\bigl[&e^{4f}\phi
       +\phi^{-3}|\partial\phi|^4
       +\varepsilon^{-2}\phi^{1-2\sigma}\notag\\
 &\quad+\varepsilon(|D|^2+|E|^2+|G|^2)
                       \phi^{1+\sigma}\bigr].
 \label{eq:jl-gradient-control}
\end{align}
\end{lemma}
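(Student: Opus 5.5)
Since $n\ge2$, the coefficient of $f_0^2$ in the divergence identity of \cref{lem:jl-f0-control} is positive. The plan is to find a companion divergence identity in which $|\partial f|^4$ appears with a positive coefficient, with the remaining terms either controlled by $f_0^2$, $e^{4f}$, cutoff terms, or the one-forms $D,E,G$. Then we close the argument with \cref{lem:jl-f0-control}.

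\emph{The companion identity.} The natural candidate is the real part of the divergence of $w|\partial f|^2f_\alpha\phi$, or equivalently of $w\,g f_\alpha\phi$ with $g$ as in \eqref{eq:gG-def}. We compute $(w|\partial f|^2 f_\alpha)_{\bar\alpha}$ using three facts:
\[
 w_{\bar\alpha}=2(n-1)wf_{\bar\alpha},
 \qquad
 \Rea f_{\alpha\bar\alpha}=-n|\partial f|^2-ne^{2f}
 \quad\text{by \eqref{eq:f-equation}},
\]
and the identity $f_{\alpha\beta}=D_{\alpha\beta}+2f_\alpha f_\beta$. For the mixed second derivatives we write
\[
 f_{\alpha\bar\beta}=E_{\alpha\bar\beta}+\tfrac1n f_{\gamma\bar\gamma}\delta_{\alpha\bar\beta},
 \qquad
 \Ima f_{\gamma\bar\gamma}=\text{const}\cdot f_0 .
\]
Contracting with $f$ then produces $|\partial f|^4$ from three sources: the weight $2(n-1)$, the term $-n|\partial f|^2$ from the equation, and the term $4|\partial f|^4$ coming from $D_{\alpha\beta}$. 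The remaining terms are $D_\alpha\bar f_\alpha$, $E_\alpha\bar f_\alpha$, $e^{2f}|\partial f|^2$ and $f_0|\partial f|^2$ (imaginary parts). We add a suitable real multiple of the identity \eqref{eq:jl-f0-divergence} and a multiple of $\Rea(we^{2f}f_\alpha\phi)_{\bar\alpha}$. The goal is to reach a combination of the form
\[
 c_1\int w|\partial f|^4\phi \;\le\; C\int w\bigl(e^{4f}+f_0^2\bigr)\phi+\text{(one-form and cutoff terms)},
 \qquad c_1>0.
\]
The hypothesis $n\ge2$ should be exactly what makes the net coefficient $c_1$ positive.

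\emph{Estimating the error terms.} The one-form cross terms $|\partial f|^3(|D|+|E|)\phi$ are handled by the weighted three-factor Young inequality, as in the proof of \cref{lem:jl-f0-control}. Writing $|D|\le|\partial f|\,|D_{\alpha\beta}|$ would be circular, so instead we treat $D_\alpha$ and $E_\alpha$ directly as one-forms; their contraction with $\bar f_\alpha$ only costs $|\partial f|$. This gives
\[
 |\partial f|(|D|+|E|+|G|)\phi\le \delta|\partial f|^4\phi+\varepsilon^{-2}\phi^{1-2\sigma}+\varepsilon(|D|^2+|E|^2+|G|^2)\phi^{1+\sigma}.
\]
The boundary term $w|\partial f|^3|\partial\phi|$ is bounded by $\delta|\partial f|^4\phi+C\phi^{-3}|\partial\phi|^4$. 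The terms $e^{2f}|\partial f|^2$ and $|f_0||\partial f|^2$ are absorbed by Young's inequality into $\delta|\partial f|^4+Ce^{4f}+Cf_0^2$. After absorbing the $\delta$ terms, we are left with $\int wf_0^2\phi$ on the right.

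\emph{Closing.} We insert \cref{lem:jl-f0-control} to bound $\int wf_0^2\phi$. Its right side again contains $C\int w|\partial f|^4\phi$, so we cannot simply absorb. The fix is to rerun the proof of \cref{lem:jl-f0-control} with small Young weights, so that it gives
\[
 \int wf_0^2\phi\le C\int w\bigl(e^{4f}+|\partial f|^2e^{2f}+|\partial f|^4\bigr)\phi+\cdots
\]
with an explicit, $n$-dependent constant in front of $|\partial f|^4$. Substituting this into the companion identity must yield a positive net coefficient for $|\partial f|^4$.

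\emph{Main obstacle.} This coefficient bookkeeping is the crux. If the naive combination fails, the fallback is to choose the multiplier of \eqref{eq:jl-f0-divergence} so that the $f_0^2$ terms cancel exactly between the two identities, rather than estimating one in terms of the other. This leaves a quadratic form in $(|\partial f|^2,e^{2f},f_0)$, and we check that it is coercive in $|\partial f|^4$ modulo $e^{4f}$ precisely when $n\ge2$.
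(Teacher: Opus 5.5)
Your companion identity is the paper's own: the real part of the divergence of $w|\partial f|^2f_\alpha\phi$. Your Young-inequality treatment of the $e^{2f}|\partial f|^2$, one-form and cutoff terms also matches the paper. The gap is the step you call the crux. You never carry out the computation, the contributions you list are wrong, and this leads you to an $f_0$ term that is not there.

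\textbf{The computation.} Write out $\Rea\bigl((w|\partial f|^2f_\alpha)_{\bar\alpha}\bigr)$ term by term.
\begin{itemize}
\item Only one of the two terms of $(f_\beta f_{\bar\beta})_{\bar\alpha}f_\alpha$ contains $\overline{f_{\alpha\beta}}$. It equals $f_\alpha\overline{D_\alpha}+2|\partial f|^4$, so $D_{\alpha\beta}$ contributes $2$, not $4$.
\item The other term gives $\Rea(f_\alpha\overline{E_\alpha})$ plus the trace part $\frac1n f_{\gamma\bar\gamma}|\partial f|^2$. This combines with $|\partial f|^2f_{\alpha\bar\alpha}$ into $(1+\frac1n)f_{\gamma\bar\gamma}|\partial f|^2$.
\item Since $|\partial f|^2$ is real, the imaginary part of $f_{\gamma\bar\gamma}$ (the multiple of $f_0$ you mention) disappears under $\Rea$. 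By \eqref{eq:f-equation}, the real part becomes $-(n+1)(|\partial f|^4+e^{2f}|\partial f|^2)$.
\end{itemize}
The outcome is exactly \eqref{eq:jl-gradient-divergence}. The coefficient of $|\partial f|^4$ is $2(n-1)+2-(n+1)=n-1$, and there is no $f_0$ at all. The hypothesis $n\ge2$ is used precisely to make $n-1>0$; the three sources you list would give $n+2$, which would make the hypothesis idle. With this identity your error estimates close immediately, and \cref{lem:jl-f0-control} is never needed.

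\textbf{Why your closing step is not a proof.} In \eqref{eq:jl-f0-divergence} the relation $n\int wf_0^2\phi=\int w(|\partial f|^4+e^{2f}|\partial f|^2)\phi+\cdots$ is exact. Hence any bound for $\int wf_0^2\phi$ carries $|\partial f|^4$ with coefficient at least $1/n$. Rerunning \cref{lem:jl-f0-control} with small Young weights cannot shrink that coefficient. Whether substituting it back leaves a positive net coefficient then becomes an exact comparison of constants, which you do not carry out.

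\textbf{The claimed equivalence is false.} $wgf_\alpha\phi$ is not equivalent to $w|\partial f|^2f_\alpha\phi$. Because $g=|\partial f|^2+e^{2f}-\ii f_0$, the real divergence of $wgf_\alpha\phi$ is the sum of three identities: \eqref{eq:jl-gradient-divergence}, the $e^{2f}f_\alpha$ identity, and \eqref{eq:jl-f0-divergence}. The $-|\partial f|^4$ in the last one lowers the coefficient to $n-2$, which vanishes exactly when $n=2$.

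The fix is simply to compute the real part of the $|\partial f|^2f_\alpha$ divergence and use that identity alone.
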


\begin{proof}
The equation and the definitions of $D_\alpha,E_\alpha$ give the
pointwise identity
\begin{align}
 \Rea\bigl((w|\partial f|^2f_\alpha\phi)_{\bar\alpha}\bigr)
 ={}&w\bigl((n-1)|\partial f|^4
             -(n+1)e^{2f}|\partial f|^2\bigr)\phi\notag\\
 &+\Rea\bigl(wf_\alpha(\overline{D_\alpha}
                  +\overline{E_\alpha})\phi
       +w|\partial f|^2f_\alpha\phi_{\bar\alpha}\bigr).
 \label{eq:jl-gradient-divergence}
\end{align}
Integrate \eqref{eq:jl-gradient-divergence}.  For an auxiliary
$\rho>0$, Young's inequality bounds the first and last terms on the
right by
\begin{align*}
 e^{2f}|\partial f|^2\phi
 &\le C_\rho e^{4f}\phi+\rho|\partial f|^4\phi,\\
 |\partial f|^3|\partial\phi|
 &\le 3\rho|\partial f|^4\phi
       +C_\rho\phi^{-3}|\partial\phi|^4.
\end{align*}
The remaining term is bounded by the same weighted three-factor Young
inequality used in \cref{lem:jl-f0-control}.  Choose $\rho$ small
relative to $n-1$ and absorb the gradient terms.  This proves
\eqref{eq:jl-gradient-control}.
\end{proof}

\subsection{The annular estimate}

\begin{proposition}[The $m=0$ annular cutoff inequality]
\label{prop:m0-annular-cutoff}
Let $\eta=\eta_{a,R}$ be a smooth cutoff such that
\begin{equation}\label{eq:jl-cutoff-properties}
 0\le\eta\le1,\qquad
 \eta=1\ \hbox{on }B_R(a),\qquad
 \supp\eta\subset B_{2R}(a),\qquad
 |\partial\eta|\le CR^{-1}.
\end{equation}
Assume $n\ge2$.  For $R>0$,
\begin{align}
 \left(\int\fA_f\eta^8\right)^2
 \le{}&CR^{-2}
 \left(\int_{B_{2R}(a)}w(e^{4f}+R^{-4})\right)
 \left(\int_{B_{2R}(a)\setminus B_R(a)}
                    \fA_f\eta^8\right).
 \label{eq:m0-annular-cutoff}
\end{align}
The constant is independent of $a$ and $R$.
\end{proposition}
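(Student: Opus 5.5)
We apply \cref{lem:jl-basic-cutoff} with the power cutoff $\phi=\eta^8$. Since $\partial\phi=8\eta^7\partial\eta$, we have $\phi^{-1}|\partial\phi|^2=64\eta^6|\partial\eta|^2\le CR^{-2}\eta^6$. The set $\{\partial\phi\ne0\}$ lies in $B_{2R}(a)\setminus B_R(a)$, because $\eta\equiv1$ on $B_R(a)$. This yields
\[
 \int\fA_f\eta^8\le CR^{-1}\Bigl(\int w(e^{4f}+|\partial f|^4+f_0^2)\eta^6\Bigr)^{1/2}
 \Bigl(\int_{B_{2R}(a)\setminus B_R(a)}\fA_f\eta^8\Bigr)^{1/2}.
\]
After squaring, it remains to bound $\int w(|\partial f|^4+f_0^2)\eta^6$ by a constant times $\int_{B_{2R}(a)}w(e^{4f}+R^{-4})$ plus an absorbable multiple of $\int\fA_f\eta^8$.

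For the lower-order terms we apply \cref{lem:jl-f0-control} and \cref{lem:jl-gradient-control} with $\phi=\eta^6$ and $\sigma=1/3$. The cutoff term becomes $\phi^{-3}|\partial\phi|^4=6^4\eta^{6}|\partial\eta|^4\le CR^{-4}$ on $B_{2R}(a)$. The defect term becomes $\varepsilon\int w(|D|^2+|E|^2+|G|^2)\eta^8\le\varepsilon\int\fA_f\eta^8$, using $\fA_f\ge w(|D|^2+|E|^2+|G|^2)$. The remaining term $\varepsilon^{-2}\phi^{1-2\sigma}=\varepsilon^{-2}\eta^2$ carries no decaying factor. To obtain the $R^{-4}$ weight, the parameter $\varepsilon$ must scale with $R$. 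We therefore take $\varepsilon=\delta R^2$, so that $\varepsilon^{-2}\le\delta^{-2}R^{-4}$. The gradient term appearing on the right of \cref{lem:jl-f0-control} is then replaced using \cref{lem:jl-gradient-control}. Altogether we obtain
\[
 \int w(|\partial f|^4+f_0^2)\eta^6\le C_\delta\int_{B_{2R}(a)}w(e^{4f}+R^{-4})+C\delta R^2\int\fA_f\eta^8 .
\]

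Write $X=\int\fA_f\eta^8$, $Y=\int_{B_{2R}(a)}w(e^{4f}+R^{-4})$, and $Z$ for the annular integral. Inserting the previous bound gives
\[
 X^2\le CR^{-2}\bigl(C_\delta Y+C\delta R^2X\bigr)Z\le C_\delta R^{-2}YZ+C\delta XZ.
\]
Since $Z\le X$, the last term is at most $C\delta X^2$. Choosing $\delta$ small, we absorb it into the left side and obtain \eqref{eq:m0-annular-cutoff}. The absorption is legitimate because $X<\infty$: $\fA_f$ is smooth and the integral is over a compact set. Every constant depends only on $n$ and the cutoff bound, and is therefore independent of $a$ and $R$.

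I expect the main obstacle to be the exponent bookkeeping. One has to check that the choice $\phi=\eta^6$ produces exactly the weight $\eta^8=\phi^{1+\sigma}$ in the defect term, so that the bound closes against $X$ itself rather than against a different power of $\eta$. One also has to check that the degenerate weights $\phi^{-3}|\partial\phi|^4$ and $\phi^{-1}|\partial\phi|^2$ extend continuously by zero, as noted in the lemmas. A further point is whether the $\varepsilon$-scaling is really needed: with $\varepsilon$ fixed, the $\varepsilon^{-2}$ term would lack the $R^{-4}$ factor. The choice $\varepsilon\sim R^2$ is therefore the key, and it is consistent with the scale invariance of the estimate.
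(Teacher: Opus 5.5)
Your proposal is correct and follows the paper's proof essentially line by line. It applies the basic cutoff lemma with $\phi=\eta^8$, then the two lower-order lemmas with $\phi=\eta^6$, $\sigma=1/3$ and $\varepsilon=\delta R^2$, and closes by absorption using $Z\le X$. One harmless slip: in fact $\phi^{-3}|\partial\phi|^4=6^4\eta^2|\partial\eta|^4$, not $6^4\eta^6|\partial\eta|^4$, which still gives the bound $CR^{-4}$ on $B_{2R}(a)$ that you use.
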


\begin{proof}
Apply \cref{lem:jl-basic-cutoff} with $\phi=\eta^8$.  Writing
\[
 I=\int\fA_f\eta^8,\qquad
 I_{\rm ann}=\int_{B_{2R}(a)\setminus B_R(a)}\fA_f\eta^8,
\]
we obtain
\begin{equation}\label{eq:pre-annular-cutoff}
 I^2\le CR^{-2}
 \left(\int w(e^{4f}+|\partial f|^4+f_0^2)\eta^6\right)
 I_{\rm ann}.
\end{equation}
Use \cref{lem:jl-f0-control,lem:jl-gradient-control} with
\begin{equation}\label{eq:jl-parameter-choice}
 \phi=\eta^6,\qquad \sigma=\frac13,\qquad
 \varepsilon=\rho R^2.
\end{equation}
The three relevant weights are exactly
\[
 \phi^{-3}|\partial\phi|^4
       \le CR^{-4}\eta^2,\qquad
 \phi^{1-2\sigma}=\eta^2,\qquad
 \phi^{1+\sigma}=\eta^8.
\]
Consequently,
\begin{align}
 \int w(|\partial f|^4+f_0^2)\eta^6
 \le C_\rho\int_{B_{2R}(a)}w(e^{4f}+R^{-4})
       +C\rho R^2 I.
 \label{eq:lower-order-cutoff-bound}
\end{align}
Insert \eqref{eq:lower-order-cutoff-bound} into
\eqref{eq:pre-annular-cutoff}.  Since $I_{\rm ann}\le I$, the last
term is at most $C\rho I^2$ and is absorbed by fixing $\rho>0$
sufficiently small.  This proves \eqref{eq:m0-annular-cutoff}.

The exponent $8$ is not essential.  The same calculation works with
any fixed $\theta>6$: the two error weights are then
$\eta^{\theta-6}$, while the absorbable defect term has weight
$\eta^\theta$.  Left translation and Heisenberg dilation give the
uniformity in $a$ and $R$.
\end{proof}

\bibliographystyle{amsplain}
\bibliography{references}

\end{document}